\numberwithin{equation}{section}
\newcommand{\abs}[1]{\left\vert#1\right\vert}
\newcommand{\paren}[1]{\left(#1\right)}
\newcommand{\bracket}[1]{\left[#1\right]}
\newcommand{\set}[1]{\left\{#1\right\}}
\newcommand{\E}{\mathbb{E}}
\newcommand{\R}{\mathbb{R}}
\newcommand{\Z}{\mathbb{Z}}
\newcommand{\eps}{\epsilon}
\newcommand{\NS}{N}
\renewcommand{\P}{\mathbb{P}}
\DeclareMathOperator{\Var}{Var}
\DeclareMathOperator{\bigo}{O}
\DeclareMathOperator{\littleo}{o}
\DeclareMathOperator{\osc}{osc}
\DeclareMathOperator{\Int}{Int}
\DeclareMathOperator{\Gap}{Gap}
\DeclareMathOperator{\MSDO}{MSD}
\DeclareMathOperator*{\argmax}{argmax}
\def\MSD{\MSDO_n}
\begin{document}

\title{Sampling from Rough Energy Landscapes
\thanks{Received date:}
}

\author{Petr Plech\'{a}\v{c}%
        \thanks{Department of Mathematical Sciences,
         University of Delaware,
         Newark, DE 19716, USA, (plechac@udel.edu)}
    \and
         Gideon Simpson%
         \thanks{Department of Mathematics, Drexel University, Philadelphia, PA (grs53@drexel.edu)}
}

\pagestyle{myheadings}
\markboth{Sampling from rough energy landscapes}{P. Plech\'a\v{c}, G. Simpson}

\maketitle

\begin{abstract}

We examine challenges to sampling from Boltzmann distributions associated with multiscale energy landscapes.  The multiscale features, or ``roughness,''  corresponds to highly oscillatory, but bounded, perturbations of a smooth landscape.  Through a combination of numerical experiments and analysis we demonstrate that the performance of Metropolis Adjusted Langevin Algorithm can be severely attenuated as the roughness increases.   In contrast, we prove that Random Walk Metropolis is insensitive to such roughness.  We also formulate two alternative sampling strategies that incorporate large scale features of the energy landscape, while resisting the impact of fine scale roughness; these also outperform Random Walk Metropolis.  Numerical experiments on these landscapes are presented that confirm our predictions.  Open questions and numerical challenges are also highlighted.

\end{abstract}

\begin{keywords}
Markov Chain Monte Carlo, random walk Metropolis, Metropolis adjusted Langevin, rough energy landscapes, multi-scale energy landscapes, mean squared displacement
\end{keywords}

\begin{AMS}
 65C05, 65C40, 60J22
\end{AMS}

\section{Introduction}
\label{s:intro}

In this work, we consider the task of sampling from a Boltzmann distribution,
\begin{equation} \label{e:boltz1} \mu(dx) = Z^{-1}e^{-\beta V(x)}dx,\quad Z =
\int e^{-\beta V(x)}dx, \quad V:\R^n \to \R, \end{equation} when $V$ is, in some
sense ``rough,'' or ``rugged.'' We are particularly interested in multiscale landscapes  of the form
\begin{equation} \label{e:V1}
V_\eps(x) = V_0(x) + V_1(x,x/\eps)\,,\;\; \quad \eps>0 \,.
\end{equation}
Here, $V_0$ is a smooth, long range, trapping potential ($V_0(x)\to \infty$ as $|x|\to \infty$) that is bounded from  below, and $V_1$
is bounded with local short wavelength features. $V_1(x,y)$ will also be assumed to be smooth. An example of such a rough landscape, and its impact on the associated
distribution, is shown in Figure~\ref{f:rough1}.   Model potentials like  \eqref{e:V1} serve as prototypes for rough and multiscale landscapes found in disordered media and soft matter, \cite{pollak2008,Hu:2018977,duncan2016noiseinduced,arous2003multiscale,owhadi2003anomalous}.  The goal of the present work is to assess how such roughness impacts the performance of well known
Markov Chain Monte Carlo (MCMC) sampling strategies like Random Walk Metropolis (RWM) and Metropolis Adjusted
Langevin (MALA).

\begin{figure}
\subfigure[]{\includegraphics[width=6.25cm]{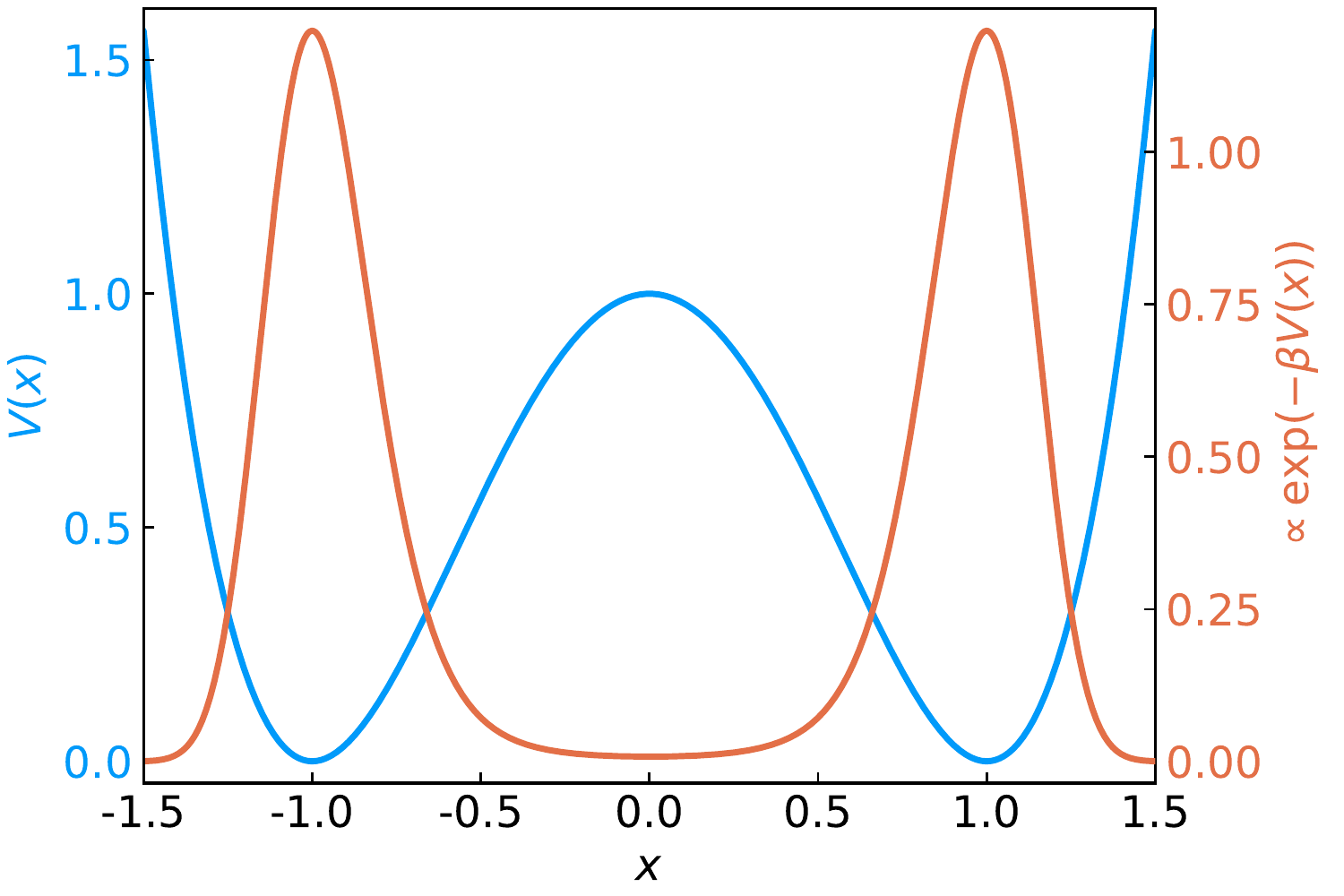}}
\subfigure[]{\includegraphics[width=6.25cm]{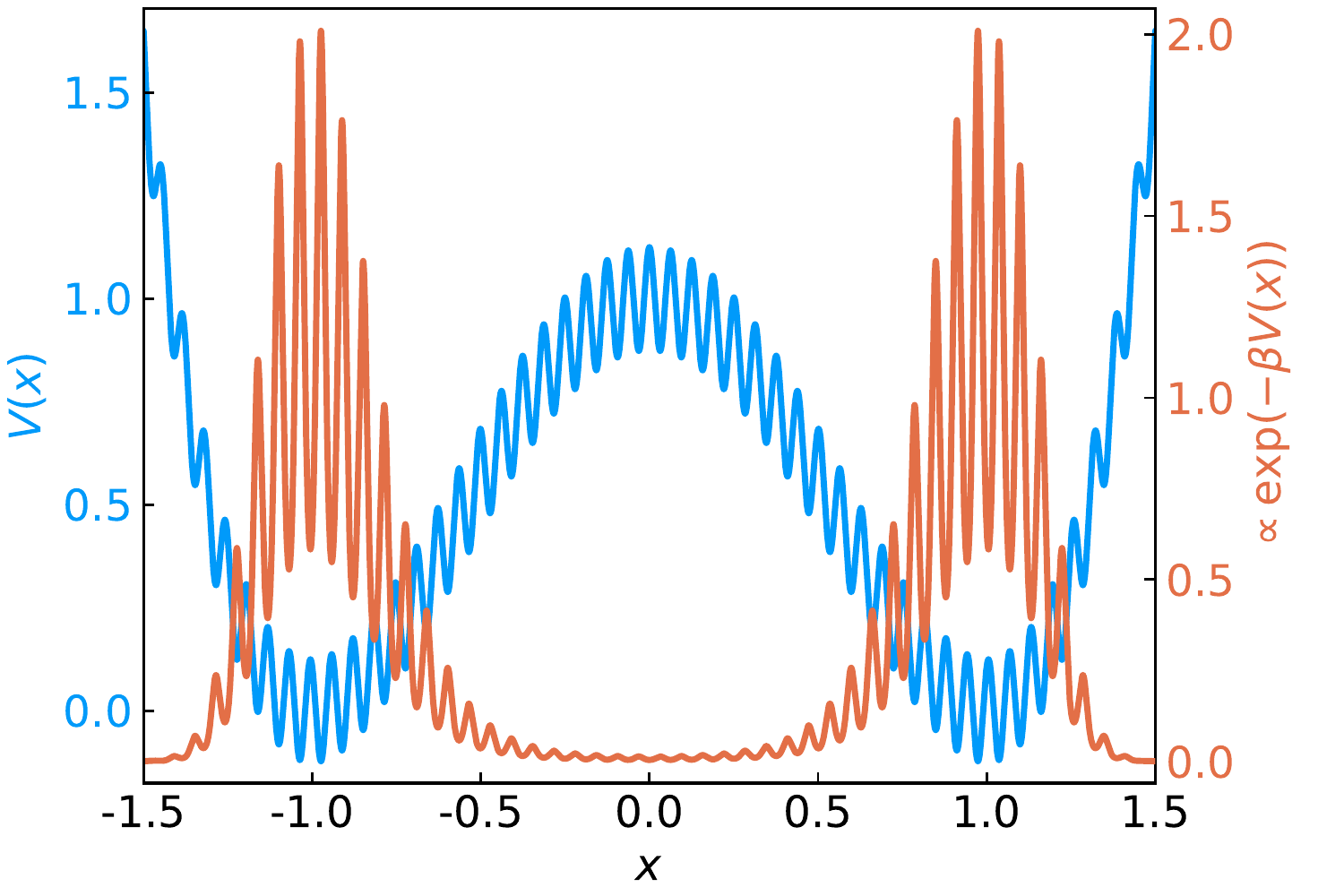}}
\caption{In (a), we see a smooth multimodal energy landscape.  In contrast,
the landscape in (b), is rough, with many internal energy barriers.  The underlying potentials are $V_0(x) = (x^2-1)^2$ and $V_\eps(x) = V_0(x) + \tfrac{1}{8}\cos(x/\eps)$ with $\eps=0.01$ and $\beta=5$ (color online).}
\label{f:rough1}
\end{figure}

We recall that RWM and MALA generate samples for $e^{-\beta V(x)}$ with proposals\footnote{Proposals will be denoted with a superscript $\mathrm{p}$.}
\begin{align}
    \label{e:RWMprop}
    \text{RWM: }X_{k+1}^{\mathrm{p}} &= X_k + \sqrt{\beta^{-1}}\sigma \xi_{k+1}, \quad \xi_{k+1}\sim N(0, I),\\
    \label{e:MALAprop}
    \text{MALA: }X_{k+1}^{\mathrm{p}} &= X_k - \tfrac{\sigma^2}{2}\nabla V(X_k)  + \sqrt{\beta^{-1}}\sigma \xi_{k+1}, \quad \xi_{k+1}\sim N(0, I)\,.
\end{align}
These proposals are then accepted or rejected with the appropriate rule to
ensure detailed balance with respect to $\mu(dx) \propto e^{-\beta V(x)}dx$.

As an example, sample MALA paths with $\sigma=1$ for the landscapes in
Figure~\ref{f:rough1} are shown in Figure~\ref{f:paths1}.  A path for
the smooth landscape exhibits better mixing than the one for the rough
landscape.  On the rough landscape the trajectory stagnates.\footnote{Stagnation corresponds to persistent rejection of proposals.}   This begs the question of whether or not $\sigma=1$ was merely a poorly chosen value -- perhaps with a different value the rough landscape would also be efficiently sampled.  Large values of $\sigma$ result in proposals with greater magnitude, but few will be accepted, and the trajectory will move slowly.  Conversely,
small values of $\sigma$ produce more readily accepted proposals, but their size limits exploration of the state space.  Consequently, an optimal choice of $\sigma$ is anticipated for each distribution.

Assuming we tune our sampler to the optimal $\sigma$ for each $V_\eps$ we seek to assess how $\eps$ impacts sampling performance.  {In this work, optimality, at a fixed value of $\epsilon$ and at a fixed dimension, will refer to maximization of some measurement of mixing, discussed below over the set of numerical parameters, {i.e.} $\sigma$ in the case of MALA.}  Ultimately, our work indicates that even at the optimal value of proposal variance,  MALA will cease to be effective as $\eps\to 0$.   In contrast, even a poorly tuned RWM sampler remains robust in the $\epsilon\to 0$ limit.

{In a ``global'' sense, robustness refers to the stability of the mixing and asymptotic variance properties of the chain as $\epsilon$ vanishes.  This can be  quantified through the spectral gap of the transition operator $\mathcal{T}$ of a given MCMC method\footnote{Some authors refer to  \eqref{e:GapT} as the interval, $\Int(\mathcal{T})$.  In \cite{Rosenthal2003}, $\Gap(\mathcal{T})$ is instead defined as $1- \sup |\lambda|$.  }
\begin{equation}
    \label{e:GapT}
    \Gap(\mathcal{T}) \equiv 1 - \sup_{\lambda\in \sigma(\mathcal{T})\setminus\{1\}} \lambda \equiv 1 - \Lambda.
\end{equation}
The gap controls both the mixing of the process and the time averaged variance constant, \cite{Rosenthal2003}.  In particular, if the gap vanishes ($\Lambda\to 1$), then the mixing collapses and the variance bound explodes.  The relationship between the gap and these quantities is given below. }

Experimentally accessible measurements of mixing can be found by looking at observables.  In particular, we consider the mean square displacement of the chain in stationarity.  If this remains positive in the $\eps\to 0$ limit, it provides a ``local'' (in the sense of a single observable) notion of robustness. In addition to being straightforward to estimate through simulations, the mean squared displacement provides an upper bound on the spectral gap.

To obtain better performance than RWM, we also formulate two related sampling strategies that incorporate information about the large scale ({i.e.} long wavelength) features of the energy landscape through $V_0$ in \eqref{e:V1}.  Indeed, our results, particularly Theorem~\ref{thm:eps} and Corollary~\ref{cor:Teps} show that for potentials that can be decomposed as in \eqref{e:V1} with $V_0$ smooth and trapping and $V_1$ rough but bounded, if the proposal of the sampling strategy is $\eps$-independent, then the performance of the method will also be $\eps$-independent.

\begin{figure}
\subfigure[Paths for Figure \ref{f:rough1}(a)]{\includegraphics[width=6.25cm]{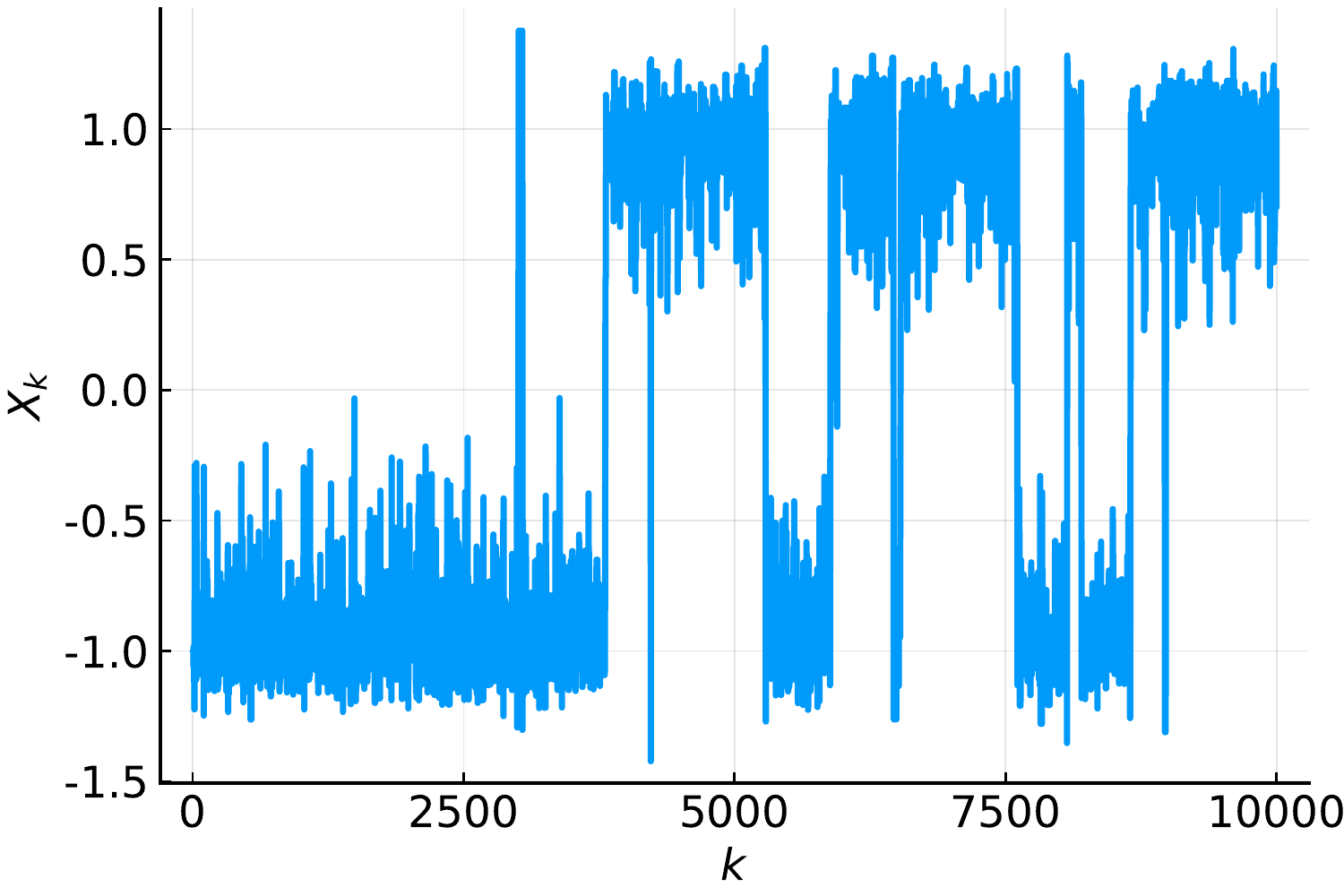}}
\subfigure[Paths for Figure \ref{f:rough1}(b)]{\includegraphics[width=6.25cm]{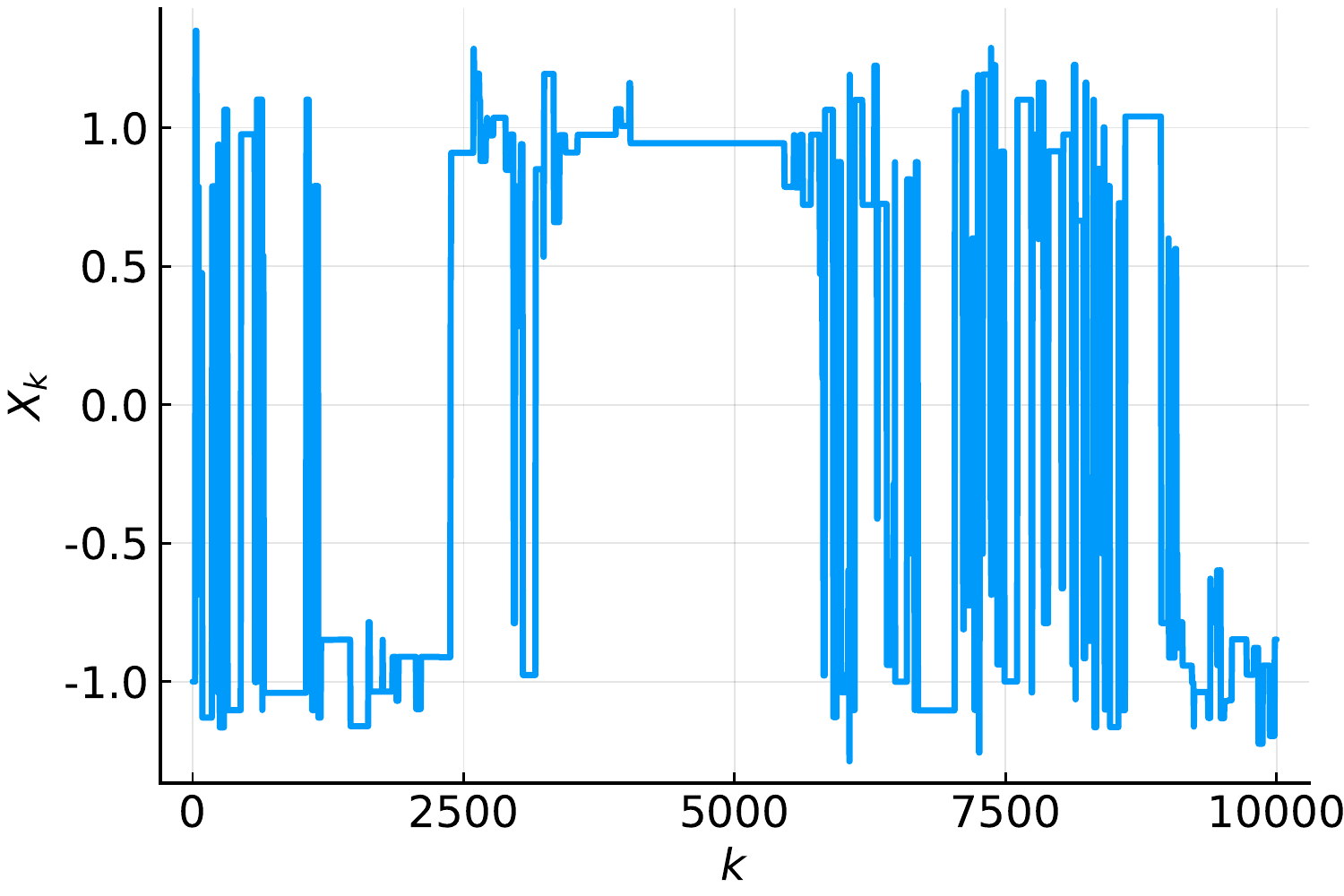}}
\caption{Sample paths corresponding to the energy landscapes in Figure~\ref{f:rough1}.  The samples were generated using MALA with $\sigma=1$ (color online).}
\label{f:paths1}
\end{figure}

\subsection{Review of prior work}

 The question of optimizing $\sigma$ to maximize performance was initially examined in \cite{roberts1997weak,roberts1998optimal} and has been subsequently  studied in other works, including
\cite{beskos2009optimal,beskos2015asymptotic,kuntz2014,kuntz2018non,kuntz2018,ottobre2016function,bourabee2018,jourdain2014,jourdain2015}.
Many of these works consider an energy landscape of the type
\begin{equation}
\label{e:Vsum}
V_n(x) = \sum_{i=1}^n v(x_i), \quad x =(x_1,\ldots,x_n)\in \R^n\,.
\end{equation}
This choice of the potentials induces {Boltzmann distributions that are products} (for brevity, we take $\beta=1$):
\begin{equation}
\label{e:musep}
\mu(dx) \propto \prod_{i=1}^n e^{-v(x_i)}dx_i.
\end{equation}
Thus, coordinates only interact through the accept/reject step of the method.  Some results for potentials other than \eqref{e:Vsum}  have also been obtained.   In \cite{beskos2009optimal}, the authors treat distributions which have a density $e^{-\Phi(x)}$ with respect to a product measure  \eqref{e:musep}.  In earlier works, \cite{roberts1997weak,roberts1998optimal,beskos2009optimal}, it was often assumed that whichever sampler is studied, the process is in stationarity.  This has been relaxed in the more recent results, \cite{kuntz2014,kuntz2018non,kuntz2018,jourdain2014,jourdain2015,beskos2015asymptotic}.

Some of our results will also specialize to the rough analog of \eqref{e:Vsum},
\begin{equation}
  \label{e:Vepssum}
  V_{\eps,n}(x) = \sum_{i=1}^n \underbrace{v_0(x_i) +
v_1(x_i,x_i/\eps)}_{\equiv v_\eps(x_i)}\,.
\end{equation}
As in the case of \eqref{e:V1}, we assume  $v_0$  is trapping while
$v_1$ is uniformly bounded.  Both $v_0(x_i)$ and $v_1(x_i,y_i)$ are assumed to be smooth functions.

In the case of \eqref{e:Vsum}, in stationarity, the performance can be measured by considering the means square displacement (MSD)
\begin{equation}
    \label{e:MSD}
    \MSD = \E_\mu[|X_{k+1}-X_k|^2]\,.
\end{equation}
The strategy of \cite{beskos2009optimal} is to use the MSD as a proxy for mixing,
and $\sigma$ is selected to maximize it in the limit of $n\to \infty$.  Indeed, this leads to the results that, as $n\to \infty$,
\begin{equation}
  \label{e:MSDdof}
\MSD/n = \ell^2 n^{-I} a(\ell;v) + \littleo(n^{-I})\,,\;\; \quad I>0\,,
\end{equation}
where $I$ and $a$ both depend on the method, but {\it not} on the dimension $n$.  The choice of $\sigma$ is then related to $\ell$ by
\begin{equation}
  \label{e:sigmaell}
  \sigma^2 = {\ell^2}{n^{-I}}
\end{equation}
The function $a$ is the mean acceptance rate in the $n\to \infty$ limit, \cite{beskos2013optimal}.

Consequently, we can maximize this measure of performance as $n\to \infty$ by solving
\begin{equation}
\label{e:ellstar}
    \ell_\star = \argmax_\ell\ell^2 a(\ell;v)\,.
\end{equation}
The optimal $\sigma_\star$ is inferred from
\eqref{e:sigmaell}, and there is an associated optimal acceptance rate,
$a(\ell_\star)$.
This optimization provides a strategy for tuning the value of $\sigma$ to achieve the optimal acceptance rate approximately 23\% for RWM.  Analogously, one tunes MALA to have a 57\% acceptance rate, \cite{roberts1997weak,roberts1998optimal},
and Hamiltonian Monte Carlo (HMC) to have a 65\% acceptance rate, \cite{bourabee2018}.

For RWM $I=1$ while it is $I=1/3$ for MALA.  The function $a$ has the explicit form
\begin{equation}
  \label{e:Accept}
  a(\ell;v) = 2 \Phi\paren{-\frac{\ell^{1/I}}{2}\sqrt{\mathcal{K}[v]}}\,,
\end{equation}
with $\mathcal{K}$ a functional involving derivatives of $v$; $\Phi$ is the standard normal ($N(0,1)$) cumulative distribution function.  A similar result holds for HMC, \cite{bourabee2018}.

There are  caveats to applying these results in practical computations, \cite{Potter:201533a}.  In particular, the results are obtained as $n \to \infty$ for distributions of form \eqref{e:musep}, and often assume the process to be in  stationarity.  Many distributions of interest will not be of this form, so target acceptance rates, like 23\% for RWM, may be inappropriate.   {However, in the recent work \cite{yang2020optimal}, it was demonstrated that for a more general class of distributions than \eqref{e:musep}, 23\% remains the optimal acceptance rate for RWM.}

{We mention these results because they motivate certain aspects of this work, such as the examination of product measures and the examination of $\MSD$ as a proxy for mixing. However, we emphasize that this work is focused on problems at fixed $n$, letting $\eps\to 0$.  The preceding results can provide guidance in the $\eps\to 0$ limit, but they would necessitate first taking $n\to \infty$.}

We also highlight the recent work in \cite{beskos2015asymptotic} which studies ``ridged'' densities associated with potentials of the form
\begin{equation}
  \label{e:ridgedV}
V_\eps(x) = V_0(x_1) + V_1(x_1, x_2/\eps)\,,\;\; \quad x = (x_1,x_2)\in \R^{n = n_1+n_2}\,.
\end{equation}
Here, roughness is only present in a subset of the degrees of freedom ($x_2$).  Examining RWM for such a problem, the authors are able to
derive a limiting diffusion from which they can find an optimal step size.  This
limiting diffusion has a state dependent diffusion coefficient. Both the drift and
diffusion coefficients are nontrivial requiring averaging against the rough
degrees of freedom.

{Related to the work on ridged densities, and the present work, is  \cite{Livingstone1908}.  In this work, the authors consider the case that one of the coordinates is scaled differently than the others.  This would correspond to a potential like
\begin{equation}
\label{e:Vmismatch}
    V_\eps(x) = V(x_1/\eps, x_2, \ldots, x_n), \quad x \in \R^n\,.
\end{equation}
In \cite{Livingstone1908} the authors also look for algorithms that are less sensitive to length scale variations in the gradients.  They obtain results on MALA and HMC showing poor behavior in the $\eps\to 0$ limit.  An important tool that they use in their analysis is the Dirichlet form and its relationship to the spectral gap;  we, too, make use of that approach.}

Another relevant work is \cite{durmus2018efficient}.  There, the authors
sought to perform gradient based sampling on non-differentiable energy landscapes and proposed using a
Moreau-Yosida regularization.  This approach is related to one of the mechanisms that we
propose in order to overcome roughness, though our potentials are smooth,
but highly oscillatory.

\subsection{Measures of performance and notions of robustness.}

{
As mentioned, the key metrics that we use to assess performance are the spectral gap, \eqref{e:GapT}, along with the MSD, \eqref{e:MSD}.  We recall the relationships amongst these quantities,  
\cite{Rosenthal2003, Livingstone1908}.  First, the relaxation to the equilibrium in the total variation (TV)  and the time averaged variance constant (TAVC) are controlled by  $\Lambda$ and $\Gap(\mathcal{T})$:
\begin{align}
    \label{e:TVmixing}
    \lim_{k\to\infty}\frac{1}{k}\log\|\rho_0 T^k - \mu\|_{\rm TV} &\leq \log \Lambda= \log(1-\Gap(\mathcal{T}))\,,\\
    \label{e:TAVC}
    \lim_{k\to \infty}\frac{1}{k}\Var_\mu\paren{\sum_{j=0}^{k-1} f(X_j)}&\leq \frac{1+\Lambda}{1-\Lambda}\E_{\mu}[f(X)^2] =  \frac{2-\Gap(\mathcal{T})}{\Gap(\mathcal{T})}\E_{\mu}[f(X)^2]\,,
\end{align}
for an initial distribution with $d\rho_0/d\mu\in L^2(\mu)$ and any function $f \in L^2(\mu)$.   Consequently, if $\Gap(\mathcal{T})\to 0$, mixing ceases and there is no upper bound on the TAVC.  On the other hand, if the gap remains positive then there is a priori bound on the TAVC.  Due to the inequality in \eqref{e:TVmixing} a positive spectral gap does {\it not} imply  a lower bound on mixing.}

{A key relationship is between spectral gap and the Dirichlet form
\begin{equation}
\label{e:Dirichlet}
    \inf_{f\in L_{0,1}^2(\mu)} \tfrac{1}{2}\E_{\mu}[|f(X_{k+1})-f(X_k)|^2] = 1 - \sup_{\lambda \in \sigma(\mathcal{T})\setminus\{1\}} \lambda = \Gap(\mathcal{T})\,,
\end{equation}
where $L_{0,1}^2(\mu)$ is the subset of mean zero, unit variance functions in $L^2(\mu)$.  An elementary computation reveals that this is equivalent to
\begin{equation}
\label{e:Dirichlet2}
    \Gap(\mathcal{T}) = \inf_{f\in L^2(\mu)} \frac{\E_{\mu}[|f(X_{k+1})-f(X_k)|^2]}{2\Var_\mu(f(X))}
\end{equation}
Consequently, for any square integrable function, if we can estimate the term on the right-hand side of \eqref{e:Dirichlet2} , we have obtained an upper bound on the spectral gap.  This allows us to use \eqref{e:MSD} as a proxy for the gap;   if we find, numerically, that $\MSD\to 0$ as $\eps\to 0$, that is strong empirical evidence that $\Gap(\mathcal{T}_\eps)\to 0$ too.  Again, due to the infimum  in the Dirichlet form, positivity of the one step jumping distance of any observable, including $\MSD$, does not imply positivity of the gap.  }

{We thus formalize  two notions  of robustness.  Our robustness criteria for a method with a transition operator $\mathcal{T}_\eps$ for sampling the Boltzmann distribution of potential $V_\eps$ are
\begin{align}
\label{e:Globalrobustness}
\text{Global Robustness Criterion:} & \quad  \liminf_{\eps\to 0} \Gap(\mathcal{T}_\eps)>0\,,\\
\label{e:MSDrobustness}
    \text{Local Robustness Criterion:}&\quad \liminf_{\eps\to 0} \frac{\MSD}{2\Var_\mu(X)} >0\,.
\end{align}
Analogous robustness conditions can be constructed for other observables.  Failure to be locally robust immediately implies the method cannot be globally robust.  Likewise, a globally robust method automatically implies local robustness. However, there may be distributions and methods for which, on a particular observable, the method is locally robust, but for which it fails to be globally robust. }

{In \eqref{e:Globalrobustness} and \eqref{e:MSDrobustness}, no mention is made of the choice of the numerical parameter $\sigma$.  Some methods may fail to be robust if $\sigma$ is not chosen carefully.  Thus, we introduce two conditional notions of robustness that depend on $\sigma$
\begin{align}
\label{e:Globalrobustness2}
\text{Global Robustness Criterion with Optimal $\sigma$:} & \quad  \liminf_{\eps\to 0} \sup_{\sigma }\Gap(\mathcal{T}_\eps(\sigma))>0\,,\\
\label{e:MSDrobustness2}
    \text{Local Robustness Criterion with Optimal $\sigma$:}&\quad \liminf_{\eps\to 0}  \sup_{\sigma}\frac{\MSD(\sigma)}{2\Var_\mu(X)} >0\,.
\end{align}
These notions generalize to methods with additional numerical parameters.  Obviously, if a method fails to satisfy \eqref{e:Globalrobustness2}, it will fail to satisfy \eqref{e:Globalrobustness}.  Conversely, if the method satisfies \eqref{e:Globalrobustness}, then it will also satisfy \eqref{e:Globalrobustness2}.  Analogous relationships can be formulated for local robustness.}

\subsection{Results on performance in the presence of roughness.}

Our key observations and results are:
\begin{enumerate}[(i)]
    \item {At a fixed dimension $n$, the performance of RWM for potentials of the form \eqref{e:V1}, is globally robust in the sense of \eqref{e:Globalrobustness}.    This is a consequence of Corollary~\ref{cor:Teps}.  Indeed, any method that uses $\eps$-independent proposals will  similarly satisfy \eqref{e:Globalrobustness}. Proposals with a sufficiently mild $\eps$-dependence will also be robust; see Corollary~\ref{cor:Teps2}.  The methods need not be optimally tuned for \eqref{e:Globalrobustness} to hold.}

    \item {A rigorous result is established that, subject to certain assumptions,  MALA fails to be globally robust when $\sigma$ is inadequately scaled for potentials like \eqref{e:Vepssum}.  This is the content of
    Theorem~\ref{t:malascaling2}.  Specifically, when $\sigma$ is too large relative to $\eps$, the spectral gap will close.  }

    \item Numerical experiments and an explicit example indicate that for $n=1$, the optimal scaling of MALA is  $\sigma\propto \sqrt{\eps}$, so that $\MSDO_1 \propto \eps$.  In contrast, for $n$ sufficiently large, the empirical optimal scaling is $\sigma\propto \eps$, so that $\MSD/n \propto \eps^2$.

    The experiments also indicate that MALA is not locally robust even at an empirically determined optimal $\sigma$, \eqref{e:MSDrobustness2}.  Thus, the method suffers generically  in the $\eps\to 0$.

    \item We formulate two alternative methods for potentials of type \eqref{e:V1} that use large scale information contained in $V_0$.  The first method, which we call {\it Modified MALA}, uses the proposals  (at $\beta=1$)
    \begin{equation}
    \label{e:ModMALAV0prop}
        X^{\mathrm{p}}_{k+1} = X_k  - \tfrac{\sigma^2}{2}\nabla V_0(X_k) + \sigma \xi_{k+1}\,.
    \end{equation}
    This method fits in the class of sampling methods studied in \cite{bourabee2018} where the authors rigorously established that for the path-wise accuracy of Metropolized integrators it is sufficient to accurately simulate the diffusion term, which \eqref{e:ModMALAV0prop} does.

    The second method, we call an {\it Independence Sampler}, uses proposals (at $\beta =1$)
    \begin{equation}
    \label{e:MCMCV0prop}
         X^{\mathrm{p}}_{k+1} = Y_{k+1}\sim e^{-V_0(x)}dx.
    \end{equation}
    The $Y_{k}$ samples are assumed to be independent, generated by an auxiliary process. Both of these methods are also insensitive to the roughness and
    outperform RWM.

    As both of these methods use $\eps$-independent
    proposals Corollary~\ref{cor:Teps} allows us to conclude they will
    also be globally robust in the $\eps\to 0$  limit.

    \item For potentials that do not admit an obvious decomposition like \eqref{e:V1} we propose using local entropy approximation, \cite{chaudhari2018deep,chaudhari2016entropysgd} to extract the large scale information needed for either the Modified MALA method  or the Independence Sampler.

\end{enumerate}

\medskip
In Section~\ref{s:asympt} we identify a bound on the performance of RWM and other algorithms, and we consider the asymptotic behavior of MALA as $\eps\to 0$.  In Section~\ref{s:algorithms} we present alternative methods that are also robust to $\eps \to 0$.  Numerical experiments are presented in Section~\ref{s:numerics}, and we conclude with a discussion in Section~\ref{s:disc}.

\section*{Acknowledgements}
 The work of PP was partially supported by  DARPA project W911NF-15-2-0122 and GS was supported by US National Science Foundation Grant DMS-1818716. The authors thank UCLA IPAM for hosting them during the beginning of this project.  The authors also thank M. Luskin, S. Osher, J. Mattingly, and N. Bou-Rabee for helpful discussions.

\section{Bounds on performance.}
\label{s:asympt}

In this section\ we present bounds on performance with respect to the robustness criteria.  For any MCMC method, let $q(x\to y)$ denote the associated proposal kernel, and define
\begin{equation}
\label{e:R}
    R(x,y) = V(x) - V(y) + \log \frac{q(y\to x)}{q(x\to y)}\,.
\end{equation}
Consequently, the proposal $X_n^{\mathrm{p}}$ is accepted with probability $F(R(X_n, X_n^{\mathrm{p}}))$.   The two forms of $F$ that we consider here are
\begin{subequations}
\label{e:Ffunc}
\begin{align}
\label{e:metrop}
  \text{Metropolis:} &\quad  F(r) = 1 \wedge e^{r}\,,\\
\label{e:barker}
  \text{Barker:} &\quad  F(r) = (1+e^{-r})^{-1}\,.
\end{align}
\end{subequations}

\subsection{Roughness independent bounds.}
\label{s:lower}

For potentials of type \eqref{e:V1} one can obtain $\eps$-independent upper and lower bounds on a variety of quantities.  Indeed, by the boundedness assumption in the introduction, we are assured that
\begin{equation}
\label{e:oscV}
    \osc V_1  = \sup_x V_1(x,x/\eps) - \inf_x V_1(x,x/\eps)< \infty\,.
\end{equation}
Our main results in this subsection are the following:{
\begin{thm}
\label{thm:eps}
Let $V_\eps$ be a potential of type \eqref{e:V1} and assume $V_1$ has uniform in $\eps$  bounded oscillation in the sense of \eqref{e:oscV}.  If the sampling strategy uses proposals $q(x\to y)$ that are $\eps$-independent, then for any $f$, the one step jumping distance is bounded by $\eps$ independent constants:
\begin{equation}
\label{e:processbound}
\begin{split}
    e^{-2\osc V_1}\E_{\mu_0}[|f(X_{k+1})-f(X_k)|^2]&\leq \E_{\mu}[|f(X_{k+1})-f(X_k)|^2]\\
    &\quad \leq e^{2\osc V_1}\E_{\mu_0}[|f(X_{k+1})-f(X_k)|^2]\,.
    \end{split}
\end{equation}
In \eqref{e:processbound}, $\mu_0(dx)\propto e^{-V_0(x)}dx$.
\end{thm}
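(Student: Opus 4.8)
The plan is to write each one-step jumping distance as an integral against the transition kernel and to note that rejected moves, which leave the chain at $X_k$, contribute nothing to $|f(X_{k+1})-f(X_k)|^2$; only the accepted-move part survives. Thus I would write
\[
\E_{\mu}[|f(X_{k+1})-f(X_k)|^2] = \frac{1}{Z}\iint e^{-V_0(x)-V_1(x,x/\eps)}\,q(x\to y)\,F(R_\eps(x,y))\,|f(y)-f(x)|^2\,dx\,dy,
\]
where $R_\eps$ denotes the quantity \eqref{e:R} formed with $V=V_\eps$, and the corresponding expression for $\mu_0$ is obtained by replacing $V_\eps$ with $V_0$ throughout (so $Z\to Z_0$, the density loses its $V_1$ factor, and $R_\eps\to R_0$). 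The goal is then to compare these integrals factor by factor.

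The one place where $\eps$-independence of $q$ is used is the cancellation $R_\eps(x,y)-R_0(x,y)=V_1(x,x/\eps)-V_1(y,y/\eps)$: the proposal-ratio terms $\log(q(y\to x)/q(x\to y))$ and the $V_0$ contributions in \eqref{e:R} are common to both chains and drop out. By \eqref{e:oscV} this difference is at most $\osc V_1$ in absolute value, uniformly in $x,y,\eps$. I would then invoke the elementary fact that for both rules in \eqref{e:Ffunc} the map $r\mapsto\log F(r)$ is $1$-Lipschitz: for Metropolis $\log F(r)=\min(0,r)$, while for Barker $\tfrac{d}{dr}\log F(r)=(1+e^{r})^{-1}\in(0,1)$. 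Hence $e^{-\osc V_1}\le F(R_\eps)/F(R_0)\le e^{\osc V_1}$ pointwise.

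The remaining ingredients are pure boundedness of $V_1$. Writing $m=\inf V_1$ and $M=\sup V_1$ (so $M-m=\osc V_1$), the density factor obeys $e^{-V_1(x,x/\eps)}\in[e^{-M},e^{-m}]$, and the normalizers obey $e^{m}\le Z_0/Z\le e^{M}$ since $Z=\int e^{-V_0-V_1}\,dx\in[e^{-M}Z_0,\,e^{-m}Z_0]$. Because $e^{-V_0(x)}q(x\to y)F(R_0)|f(y)-f(x)|^2\ge 0$, I can bound the unnormalized $\mu$-integrand pointwise by this nonnegative quantity times the factors $e^{-V_1}$ and $F(R_\eps)/F(R_0)$, integrate in $x,y$, and finally absorb $Z_0/Z$. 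Collecting extreme values yields, on the upper side, $\tfrac{Z_0}{Z}\,e^{-m}\,e^{\osc V_1}\le e^{M}e^{-m}e^{M-m}=e^{2\osc V_1}$, and symmetrically $e^{-2\osc V_1}$ on the lower side, which is precisely \eqref{e:processbound}.

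The only genuinely nontrivial step is the $1$-Lipschitz bound on $\log F$; everything else is bookkeeping with the oscillation of $V_1$. I expect that to be the crux, and I note the argument goes through verbatim for any acceptance function with $1$-Lipschitz logarithm. I would also flag that the cancellation $R_\eps-R_0=V_1(x,x/\eps)-V_1(y,y/\eps)$ is exactly what breaks down once $q$ is permitted to depend on $\eps$, which is why the hypothesis cannot be dropped.
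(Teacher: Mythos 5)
Your proposal is correct and follows essentially the same route as the paper: the cancellation $R_\eps-R_0=V_1(x,x/\eps)-V_1(y,y/\eps)$ giving $e^{-\osc V_1}\le F(R_\eps)/F(R_0)\le e^{\osc V_1}$ (the paper's \eqref{e:Rbounds}), combined with the density and normalizer comparison between $\mu$ and $\mu_0$ (the paper's Lemma~\ref{lem:eps1}), each contributing one factor of $e^{\pm\osc V_1}$. Your explicit $1$-Lipschitz justification for $\log F$ is a slightly cleaner packaging of a step the paper asserts without detail, but it is not a different argument.
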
}

{A corollary to this result provides $\eps$ independent bounds on the spectral gap:
\begin{cor}
\label{cor:Teps}
Under the same assumptions as Theorem~\ref{thm:eps}
\begin{equation}
    e^{-3 \osc V_1} \Gap(\mathcal{T}_0) \leq \Gap(\mathcal{T}_\eps)\leq e^{3 \osc V_1} \Gap(\mathcal{T}_0)
\end{equation}
where $\mathcal{T}_0$ is the transition operator of the method with proposals generated by $q$ sampling $\mu_0$.
\end{cor}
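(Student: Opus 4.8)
The plan is to argue entirely through the Rayleigh-quotient form of the gap in \eqref{e:Dirichlet2}, namely
\[
\Gap(\mathcal{T}_\eps)=\inf_{f\in L^2(\mu_\eps)}\frac{\E_{\mu_\eps}[|f(X_{k+1})-f(X_k)|^2]}{2\Var_{\mu_\eps}(f(X))},\qquad \mu_\eps\propto e^{-V_\eps},
\]
and to compare this quotient termwise against the corresponding quotient for $\Gap(\mathcal{T}_0)$. Theorem~\ref{thm:eps} already delivers the numerator comparison: since the proposal $q$ is $\eps$-independent, for every $f$ one has the two-sided bound $e^{-2\osc V_1}\E_{\mu_0}[|f(X_{k+1})-f(X_k)|^2]\le \E_{\mu_\eps}[|f(X_{k+1})-f(X_k)|^2]\le e^{2\osc V_1}\E_{\mu_0}[|f(X_{k+1})-f(X_k)|^2]$. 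The only remaining task is a matching two-sided comparison of the denominators $\Var_{\mu_\eps}(f)$ and $\Var_{\mu_0}(f)$; combining the two comparisons inside the infimum then produces the exponent $3=2+1$.

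For the denominator I would first record the pointwise density bound. As $\mu_\eps$ and $\mu_0$ share the factor $e^{-V_0}$,
\[
\frac{d\mu_\eps}{d\mu_0}(x)=\frac{Z_0}{Z_\eps}\,e^{-V_1(x,x/\eps)},\qquad Z_\eps=\int e^{-V_0-V_1}\,dx,
\]
and bounding $e^{-V_1}$ between $e^{-\sup V_1}$ and $e^{-\inf V_1}$ --- which also forces $Z_0/Z_\eps\in[e^{\inf V_1},e^{\sup V_1}]$ --- yields the uniform estimate $e^{-\osc V_1}\le d\mu_\eps/d\mu_0\le e^{\osc V_1}$. This is a routine computation. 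In particular $\mu_\eps$ and $\mu_0$ are mutually absolutely continuous with bounded densities, so $L^2(\mu_\eps)=L^2(\mu_0)$ and both infima run over the same space.

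The one genuinely delicate step --- and the place I expect the only real obstacle --- is that $\Var_\mu$ centers $f$ at its own mean, and $\E_{\mu_\eps}[f]\neq\E_{\mu_0}[f]$ in general, so the density bound cannot be applied to the variance directly. I would sidestep this using the variational identity $\Var_\mu(f)=\inf_{c\in\R}\E_\mu[(f-c)^2]$, which removes the measure-dependent centering. Inserting the $\mu_0$-optimal constant $c_0=\E_{\mu_0}[f]$ into the $\mu_\eps$ integral and applying the density bound gives $\Var_{\mu_\eps}(f)\le\E_{\mu_\eps}[(f-c_0)^2]\le e^{\osc V_1}\E_{\mu_0}[(f-c_0)^2]=e^{\osc V_1}\Var_{\mu_0}(f)$, and the symmetric argument using the $\mu_\eps$-optimal constant gives $\Var_{\mu_\eps}(f)\ge e^{-\osc V_1}\Var_{\mu_0}(f)$.

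Finally I would assemble the estimates. For the upper bound, fix $f$: the numerator bound $\E_{\mu_\eps}[\,\cdot\,]\le e^{2\osc V_1}\E_{\mu_0}[\,\cdot\,]$ together with $\Var_{\mu_\eps}(f)\ge e^{-\osc V_1}\Var_{\mu_0}(f)$ gives $\frac{\E_{\mu_\eps}[\,\cdot\,]}{2\Var_{\mu_\eps}(f)}\le e^{3\osc V_1}\,\frac{\E_{\mu_0}[\,\cdot\,]}{2\Var_{\mu_0}(f)}$; taking the infimum over the common space $L^2(\mu_0)$ yields $\Gap(\mathcal{T}_\eps)\le e^{3\osc V_1}\Gap(\mathcal{T}_0)$. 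The lower bound is identical with every inequality reversed, now using $\E_{\mu_\eps}[\,\cdot\,]\ge e^{-2\osc V_1}\E_{\mu_0}[\,\cdot\,]$ and $\Var_{\mu_\eps}(f)\le e^{\osc V_1}\Var_{\mu_0}(f)$, which contributes the factor $e^{-3\osc V_1}$.
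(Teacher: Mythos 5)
Your proposal is correct and follows essentially the same route as the paper: the numerator of the Rayleigh quotient \eqref{e:Dirichlet2} is controlled by Theorem~\ref{thm:eps} (factor $e^{2\osc V_1}$), the denominator by the variance equivalence that the paper isolates as Lemma~\ref{lem:vareps} (factor $e^{\osc V_1}$), and the two are combined before taking the infimum over the common space $L^2(\mu)=L^2(\mu_0)$. Your variational-identity treatment of the centering issue is the same device the paper uses in Lemma~\ref{lem:vareps} (bounding the variance by the second moment about the other measure's mean), so there is no substantive difference.
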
}

{\begin{rem}
We emphasize that these results, and the results in the rest of Section~\ref{s:lower}, are all at a fixed dimension $n$.  Constants, like $e^{\pm 2\osc V_1}$ in \eqref{e:processbound}, may depend in an unfavorable way on $n$.  Additionally,  Theorem~\ref{thm:eps} and Corollary~\ref{cor:Teps} hold independently of the choice of any numerical parameters, like $\sigma$ in RWM.
\end{rem}}

To prove Theorem~\ref{thm:eps} and its corollary, we first prove the following bounds on the distribution.
\begin{lem}
\label{lem:eps1}
Let $V_\eps$ be a potential of type \eqref{e:V1}, and assume $V_1$ has uniform in $\eps$  bounded oscillation in the sense of \eqref{e:oscV}.  Then
\begin{equation}
\label{e:mubounds}
e^{-\osc V_1}\mu_0(dx)\leq    \mu(dx)\leq e^{\osc V_1}\mu_0(dx)
\end{equation}
\end{lem}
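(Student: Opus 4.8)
The plan is to control the ratio of the two densities pointwise, then upgrade this to an inequality between measures by showing the normalizing constants are also comparable. Writing out the densities, we have $\mu(dx) \propto e^{-V_\eps(x)}dx = e^{-V_0(x) - V_1(x,x/\eps)}dx$, whereas $\mu_0(dx)\propto e^{-V_0(x)}dx$. The entire $\eps$-dependence (and all the roughness) is confined to the factor $e^{-V_1(x,x/\eps)}$, and by the oscillation assumption \eqref{e:oscV} this factor is squeezed between two $\eps$-independent constants: if we set $m = \inf_x V_1(x,x/\eps)$ and $M = \sup_x V_1(x,x/\eps)$, then $M - m = \osc V_1 < \infty$ and $e^{-M} \leq e^{-V_1(x,x/\eps)} \leq e^{-m}$ for every $x$.

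The subtlety is that the bound \eqref{e:mubounds} is asserted for the \emph{normalized} probability measures, so the arbitrary additive offset in $V_1$ must not survive. I would therefore carry out the argument at the level of the unnormalized densities together with their partition functions. Let $Z_\eps = \int e^{-V_0(x) - V_1(x,x/\eps)}dx$ and $Z_0 = \int e^{-V_0(x)}dx$. Integrating the pointwise bounds above against $e^{-V_0(x)}dx$ gives $e^{-M} Z_0 \leq Z_\eps \leq e^{-m} Z_0$, which guarantees in particular that $Z_\eps$ is finite and positive (using that $V_0$ is trapping and bounded below so that $Z_0$ is a finite positive constant). Now I would form the density ratio
\begin{equation*}
\frac{\mu(dx)}{\mu_0(dx)} = \frac{Z_0}{Z_\eps}\, e^{-V_1(x,x/\eps)}.
\end{equation*}

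To obtain the upper bound in \eqref{e:mubounds} I use $e^{-V_1(x,x/\eps)} \leq e^{-m}$ in the numerator and $Z_\eps \geq e^{-M} Z_0$ in the denominator, so that the ratio is at most $e^{-m}/e^{-M} = e^{M-m} = e^{\osc V_1}$. Symmetrically, for the lower bound I use $e^{-V_1(x,x/\eps)} \geq e^{-M}$ and $Z_\eps \leq e^{-m} Z_0$, giving a ratio of at least $e^{-M}/e^{-m} = e^{-\osc V_1}$. This yields $e^{-\osc V_1}\mu_0(dx) \leq \mu(dx) \leq e^{\osc V_1}\mu_0(dx)$ as claimed, with the unknown offset $m$ having cancelled precisely because it appears in both the density and the partition function. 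There is no real obstacle here beyond this bookkeeping; the only point requiring care is ensuring $Z_\eps$ is finite and positive so the ratio is well defined, which follows from the assumptions on $V_0$ and the boundedness of $V_1$, and making sure the cancellation of the additive constant in $V_1$ is done correctly so that the final constants are exactly $e^{\pm\osc V_1}$ rather than the weaker $e^{\pm M}$ or $e^{\pm m}$.
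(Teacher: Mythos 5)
Your proof is correct and follows essentially the same route as the paper's: pointwise squeezing of the factor $e^{-V_1(x,x/\eps)}$ between $e^{-\sup V_1}$ and $e^{-\inf V_1}$, combined with the corresponding two-sided bound on the partition function $Z_\eps$ relative to $Z_0$, so that the additive offset in $V_1$ cancels and only $\osc V_1$ survives. Your write-up is somewhat more explicit about the cancellation and the finiteness of $Z_\eps$ than the paper's two-line sketch, but the argument is the same.
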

\begin{proof}
The proof of this follows from direct estimates on the densities.  First,
\begin{equation*}
  Z^{-1}e^{- V(x)}\geq Z^{-1} e^{-V_0(x)} e^{-\sup V_1(x,x/\eps)}
\end{equation*}
while
\begin{equation*}
  Z = \int e^{- V(x)} \leq \underbrace{\int e^{- V_0(x)}dx}_{\equiv Z_0}e^{-\inf V_1(x,x/\eps)}
\end{equation*}
\end{proof}

As an immediate consequence we have bounds for the mean.
\begin{cor}
\label{cor:eps}
Let $V_\eps$ satisfy the same assumptions as in Lemma \ref{lem:eps1}.  Then for any non-negative observable, $f$, which may depend on a small parameter $\eps$,
\begin{equation}
\label{e:obseps}
    e^{-\osc V_1}\E_{\mu_0}[f(X)]\leq \E_{\mu}[f(X)]\leq e^{\osc V_1}\E_{\mu_0}[f(X)].
\end{equation}
\end{cor}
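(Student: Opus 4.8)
The plan is to obtain the bounds by integrating the measure inequalities of Lemma~\ref{lem:eps1} against the observable $f$. The key structural fact is that Lemma~\ref{lem:eps1} gives a \emph{pointwise} comparison of the measures, $e^{-\osc V_1}\mu_0(dx)\leq \mu(dx)\leq e^{\osc V_1}\mu_0(dx)$, and the factors $e^{\pm\osc V_1}$ are constants that do not depend on the integration variable $x$. The task therefore reduces to checking that these constant-factor bounds on $d\mu/d\mu_0$ survive integration against $f$, and that is exactly where the non-negativity hypothesis on $f$ enters.

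First I would recall the bound $\mu(dx)\leq e^{\osc V_1}\mu_0(dx)$ from Lemma~\ref{lem:eps1}. Since $f\geq 0$, multiplying both sides of this inequality by $f(x)$ preserves its direction, and integrating gives
\begin{equation*}
\E_{\mu}[f(X)] = \int f(x)\,\mu(dx) \leq \int f(x)\, e^{\osc V_1}\mu_0(dx) = e^{\osc V_1}\E_{\mu_0}[f(X)],
\end{equation*}
where the constant is pulled outside the integral in the last step. Applying the same argument to the lower bound $\mu(dx)\geq e^{-\osc V_1}\mu_0(dx)$ yields $\E_{\mu}[f(X)]\geq e^{-\osc V_1}\E_{\mu_0}[f(X)]$, and combining the two displays gives \eqref{e:obseps}. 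The fact that $f$ may itself depend on $\eps$ is irrelevant to the estimate, since the comparison constants $e^{\pm\osc V_1}$ are uniform in $\eps$ by the standing oscillation assumption.

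The only point requiring any care is the non-negativity of $f$: this is precisely the hypothesis that allows the measure inequality to be multiplied through by $f$ without flipping direction in any region. Were $f$ allowed to change sign, the argument would fail wherever $f<0$, and one would instead need a two-sided bound phrased in terms of $|f|$. Under the stated assumption, however, the result is immediate and no further estimates are needed.
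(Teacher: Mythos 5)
Your proof is correct and matches the paper's approach: the paper states Corollary~\ref{cor:eps} as an immediate consequence of Lemma~\ref{lem:eps1}, obtained exactly by integrating the pointwise measure bounds against the non-negative observable $f$. Your additional remark on where non-negativity is used is accurate and consistent with the paper's intent.
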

Similarly we obtain bounds on the variance.
{\begin{lem}
\label{lem:vareps}
Let $V_\eps$ satisfy the same assumptions as in Lemma \ref{lem:eps1}.  Then $L^2(\mu)$ and $L^2(\mu_0)$ are equivalent as sets, and
\begin{equation}
    \label{e:vareps}
    e^{-\osc V_1}\Var_{\mu_0}(f(X))\leq \Var_{\mu}(f(X)) \leq e^{\osc V_1}\Var_{\mu_0}(f(X))
\end{equation}
\end{lem}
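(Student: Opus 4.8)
The plan is to first settle the set-theoretic equivalence of $L^2(\mu)$ and $L^2(\mu_0)$, and then to derive the two-sided variance bound from the variational characterization of the variance together with the already-established Corollary~\ref{cor:eps}.

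For the set equivalence, I would apply Lemma~\ref{lem:eps1} directly to the non-negative integrand $f^2$: integrating the pointwise measure inequalities gives $e^{-\osc V_1}\int f^2\,d\mu_0 \le \int f^2\,d\mu \le e^{\osc V_1}\int f^2\,d\mu_0$ for every measurable $f$, so $\int f^2\,d\mu$ is finite if and only if $\int f^2\,d\mu_0$ is, and the two spaces coincide as sets. Finiteness of the $L^2$ norm also guarantees finiteness of the mean, since both are probability measures, so the variances appearing below are well defined.

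The crux of the argument is the observation that the variance is an infimum over constant shifts,
\[
\Var_\nu(f) = \inf_{c\in\R}\E_\nu[(f-c)^2],
\]
with the infimum attained at $c=\E_\nu[f]$. For each fixed $c$ the function $(f-c)^2$ is a non-negative observable, so Corollary~\ref{cor:eps} applies and yields $e^{-\osc V_1}\E_{\mu_0}[(f-c)^2]\le \E_{\mu}[(f-c)^2]\le e^{\osc V_1}\E_{\mu_0}[(f-c)^2]$. Taking the infimum over $c$ in the upper inequality immediately gives $\Var_\mu(f)\le e^{\osc V_1}\Var_{\mu_0}(f)$, since $\inf_c e^{\osc V_1}\E_{\mu_0}[(f-c)^2] = e^{\osc V_1}\Var_{\mu_0}(f)$.

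The one point requiring a little care is the lower bound, where one must not blindly assume that a pointwise inequality passes under the infimum. It does here: from $\E_\mu[(f-c)^2]\ge e^{-\osc V_1}\E_{\mu_0}[(f-c)^2]\ge e^{-\osc V_1}\Var_{\mu_0}(f)$, the right-hand side is a constant lower bound valid for every $c$, so taking the infimum over $c$ on the left gives $\Var_\mu(f)\ge e^{-\osc V_1}\Var_{\mu_0}(f)$. Combining the two bounds yields \eqref{e:vareps}. I expect this final monotonicity-of-the-infimum step to be the only place a reader might stumble; everything else is a direct transcription of Corollary~\ref{cor:eps}.
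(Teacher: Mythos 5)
Your proof is correct and is essentially the paper's argument: the paper proves the set equivalence the same way and obtains the upper variance bound by centering $f$ at $c=\E_{\mu_0}[f]$, dropping the squared-mean term, and applying the measure comparison, which is exactly your variational characterization instantiated at the $\mu_0$-optimal constant (the lower bound then follows by swapping the roles of $\mu$ and $\mu_0$). Your handling of the infimum in both directions is valid, so there is no gap.
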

\begin{proof}
For $f \in L^2(\mu)$, Lemma \ref{lem:eps1} ensures
\begin{equation*}
    e^{-\osc V_1} \int f(x)^2 \mu_0(dx) \leq \int f(x)^2 \mu(dx)<\infty
\end{equation*}
Hence, $f \in L^2(\mu_0)$ too and $L^2(\mu)\subset L^2(\mu_0)$.  A similar computation shows the reverse inclusion.
Next, for $f\in L^2(\mu)$, let $f_c(x) = f(x) - \E_{\mu_0}[f(X)]$.  Then, using Lemma \ref{lem:eps1} again,
\begin{equation*}
\begin{split}
    \Var_{\mu}(f(X)) &= \Var_{\mu}(f_c(X))\\
    &= \E_{\mu}[f_c(X)^2] - \E_{\mu}[f_c(X)]^2 \leq \E_{\mu}[f_c(X)^2] \\
    & \leq e^{\osc V_1} \E_{\mu_0}[f_c(X)^2] = e^{\osc V_1} \Var_{\mu_0}[f(X)]
\end{split}
\end{equation*}
Reversing the roles of $\mu$ and $\mu_0$ establishes the analogous lower bound in \eqref{e:vareps}.
\end{proof}}

{\begin{rem}
\label{rem:variance}
For potentials satisfying the assumptions of the preceding results, it will often be sufficient to examine $\E_\mu[|f(X_{k+1})-f(X_k)|^2]$ since a prior bound on the variance is provided by Lemma \ref{lem:vareps}.
\end{rem}}

\begin{proof}{{\bf Proof of Theorem~\ref{thm:eps}}.}

Given potential $V = V_0 + V_1$ of the form \eqref{e:V1} satisfying the assumptions:
\begin{equation}
\label{e:Rsplit}
\begin{split}
    R(x,y) & =\underbrace{(V(x)-V_0(x)}_{V_1(x,x/\eps)} - \underbrace{(V(y)-V_0(y) )}_{V_1(y,y/\eps)}  + \underbrace{\log \paren{\frac{e^{-V_0(y)}q(y\to x)}{e^{-V_0(x)}q(x\to y)} }}_{R_0(x,y)}
    \end{split}
\end{equation}
Since $\osc V_1$ is bounded,
\begin{equation}
R_0(x,y) +\osc V_1 \geq R(x,y) \geq  R_0(x,y)- \osc V_1.
\end{equation}
Consequently, for either choice of \eqref{e:Ffunc},
\begin{equation}
  \label{e:Rbounds}
  e^{\osc V_1} F(R_0(x,y))\geq F(R(x,y))\geq e^{-\osc V_1} F(R_0(x,y)).
\end{equation}

Since the proposal, $q(x\to y)$, is also assumed $\eps$-independent, then, for any $X_0$,
\begin{subequations}
\label{e:MSDbounds}
\begin{align}
  \begin{split}
\E_{\mu}[|f(X_{k+1)}-f(X_k)|^2]&=\E_{\mu}[|f(X_{k+1}^{\mathrm{p}})-f(X_k)|^2F(R(X_{k}, X_{k+1}^{\mathrm{p}}))]\\
& \geq e^{-\osc V_1}\E_{\mu}[|f(X_{k+1}^{\mathrm{p}})-f(X_k)|^2F({R_0}(X_{k}, X_{k+1}^{\mathrm{p}}))]
\end{split}\\
\begin{split}
\E_{\mu}[|f(X_{k+1)}-f(X_k)|^2]& \leq e^{\osc V_1}\E_{\mu}[|f(X_{k+1}^{\mathrm{p}})-f(X_k)|^2F({R_0}(X_{k}, X_{k+1}^{\mathrm{p}}))]
\end{split}
\end{align}
\end{subequations}
In the preceding upper and lower bounds on $\E[|X_{1}-X_0|^2]$, no $\eps$ is present. If we now apply Lemma \ref{lem:eps1} to the upper and lower bounds in \eqref{e:MSDbounds}, we obtain \eqref{e:processbound}.
\end{proof}
Corollary~\ref{cor:Teps} follows from the theorem and Lemma \ref{lem:vareps}.
\begin{proof}{{\bf Proof of Corollary~\ref{cor:Teps}}.}

{For any non constant $f$,
\begin{equation*}
    \frac{\E_{\mu}[|f(X_{k+1})-f(X_k)|^2]}{2 \Var_{\mu}(f(X))}\leq e^{3\osc V_1}\frac{\E_{\mu_0}[|f(X_{k+1})-f(X_k)|^2]}{2 \Var_{\mu_0}(f(X))}.
\end{equation*}
Taking the infinum over $L^2(\mu)$,
\begin{equation*}
    \Gap(\mathcal{T}_\eps) \leq e^{3 \osc V_1} \inf_{f \in L^2(\mu)}\frac{\E_{\mu_0}[|f(X_{k+1})-f(X_k)|^2]}{2 \Var_{\mu_0}(f(X))} = e^{3 \osc V_1}\Gap(\mathcal{T}_0).
\end{equation*}
This last equality is due to $L^2(\mu)$ and $L^2(\mu_0)$ being equivalent as sets.  An analogous computation establishes the lower bound.}
\end{proof}

Theorem~\ref{thm:eps} and Corollary~\ref{cor:Teps} immediately apply to RWM, as it has a proposal independent of $\eps$.  Consequently, for RWM,
\begin{equation*}
    \MSD=\E_\mu[|X_{k+1}-X_k|^2]\sim \sigma^2.
\end{equation*}
In contrast, as MALA proposals include gradients of the potential, for potentials of the form \eqref{e:V1}, these results will not apply.

The results can be modified to allow for proposals that have some $\eps$ dependence:
\begin{thm}
\label{thm:eps2}
Let $V_\eps$ be a potential of the type \eqref{e:V1}, and assume $V_1$ has   bounded oscillation, uniformly in $\eps$, in the sense of \eqref{e:oscV}.  Assume the sampling strategy's proposal kernel, $q(x\to y)$, satisfies the inequality
\begin{equation}
    C q_0(x\to y) \leq q(x\to y) \leq D q_0(x\to y)\,,
\end{equation}
where $q_0(x\to y)$ is an $\eps$-independent proposal kernel.  Then the performance, as measured by $\MSD$, is $\eps$-independent:
\begin{equation}
\label{e:processbound2}
\begin{split}
    e^{-2\osc V_1}\frac{C}{D}\E_{\mu_0}[|f(X_{k+1})-f(X_k)|^2]&\leq\E_{\mu}[|f(X_{k+1})-f(X_k)|^2]\\
    &\quad \leq e^{2\osc V_1}\frac{D}{C}\E_{\mu_0}[|f(X_{k+1})-f(X_k)|^2]\,.
    \end{split}
\end{equation}
\end{thm}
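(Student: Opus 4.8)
The plan is to follow the same skeleton as the proof of Theorem~\ref{thm:eps}, while accounting for the fact that the proposal $q$ now itself carries $\eps$-dependence, so one cannot merely bound the acceptance ratio and then transfer the remaining integral from $\mu$ to $\mu_0$. The cleanest device is to treat the acceptance probability and the proposal density as a \emph{single} object. For the Metropolis rule I would write
\[
\mu(dx)\,q(x\to dy)\,F(R(x,y)) = Z^{-1}\bigl[e^{-V(x)}q(x\to y)\bigr]\wedge\bigl[e^{-V(y)}q(y\to x)\bigr]\,dx\,dy \equiv Z^{-1}W(x,y)\,dx\,dy,
\]
which is symmetric in $(x,y)$, and likewise for the reference chain targeting $\mu_0$ with proposal $q_0$, producing $Z_0^{-1}W_0(x,y)$. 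The one-step jumping distances then read $\E_\mu[|f(X_{k+1})-f(X_k)|^2]=Z^{-1}\int |f(y)-f(x)|^2 W(x,y)\,dx\,dy$ and the analogous $\mu_0,q_0$ expression.

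Next I would push the multiplicative constants through $W$. Writing $m=\inf_x V_1(x,x/\eps)$ and $M=\sup_x V_1(x,x/\eps)$, so $\osc V_1 = M-m$, the bounds $e^{-M}\leq e^{-V_1(x,x/\eps)}\leq e^{-m}$ together with $C q_0\leq q\leq D q_0$ give $C e^{-M}\,e^{-V_0(x)}q_0(x\to y)\leq e^{-V(x)}q(x\to y)\leq D e^{-m}\,e^{-V_0(x)}q_0(x\to y)$, and the same for the swapped term. Since $a\wedge b$ is nondecreasing in each argument and satisfies $(ca)\wedge(cb)=c(a\wedge b)$ for $c>0$, these estimates propagate directly to the weight: $C e^{-M}W_0\leq W\leq D e^{-m}W_0$. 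Combining this with the normalization bound $Z_0 e^{-M}\leq Z\leq Z_0 e^{-m}$ established as in the proof of Lemma~\ref{lem:eps1}, I obtain $C e^{-\osc V_1}\,Z_0^{-1}W_0\leq Z^{-1}W\leq D e^{\osc V_1}\,Z_0^{-1}W_0$. Integrating against the nonnegative factor $|f(y)-f(x)|^2$ yields
\[
C e^{-\osc V_1}\,\E_{\mu_0}[|f(X_{k+1})-f(X_k)|^2]\leq \E_{\mu}[|f(X_{k+1})-f(X_k)|^2]\leq D e^{\osc V_1}\,\E_{\mu_0}[|f(X_{k+1})-f(X_k)|^2].
\]
Because the proposal kernels integrate to one, the sandwich forces $C\leq 1\leq D$, whence $C e^{-\osc V_1}\geq e^{-2\osc V_1}C/D$ and $D e^{\osc V_1}\leq e^{2\osc V_1}D/C$, which is exactly \eqref{e:processbound2}. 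For the Barker rule the representation is identical with $W=\tfrac{ab}{a+b}$, $a=e^{-V(x)}q(x\to y)$, $b=e^{-V(y)}q(y\to x)$; since $(a,b)\mapsto ab/(a+b)$ is also symmetric, increasing in each argument, and positively homogeneous of degree one, the same monotonicity argument applies verbatim.

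I expect the main obstacle to be conceptual rather than computational: one must recognize that the acceptance factor and the proposal density have to be bounded \emph{together} as the single weight $W$. The naive route---bounding $F(R)$ by a multiple of $F(R_0)$ with $R_0$ built from $q_0$ (which already contributes a factor $D/C$ from the proposal ratio) and then \emph{separately} replacing $q$ by $q_0$ in the integrating measure (a further factor $D$)---double counts the proposal mismatch and produces the strictly worse constant $e^{2\osc V_1}D^2/C$. A secondary, routine point is verifying the homogeneity/monotonicity property for the Barker weight and the bookkeeping $C\leq1\leq D$. As an alternative organization, one could instead re-run the argument of Theorem~\ref{thm:eps} (which uses only the oscillation bound on $V_1$, not the $\eps$-independence of the proposal) to compare the $V$ chain to the $V_0$ chain at the fixed proposal $q$, and then carry out only the $q\to q_0$ comparison at the fixed target $\mu_0$ via the homogeneity argument above; this reuses the earlier result more directly and leads to the same constants.
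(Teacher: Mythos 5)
Your proof is correct, and it takes a genuinely different route from the paper's. The paper proceeds exactly along the ``naive route'' you describe: it writes $R(x,y)$ in terms of $R_0(x,y)$ built from $V_0$ and $q_0$, absorbs the mismatch into $R_0 - \osc V_1 + \log(C/D) \leq R \leq R_0 + \osc V_1 + \log(D/C)$, deduces $F(R)\leq e^{\osc V_1}(D/C)\,F(R_0)$, and then invokes Lemma~\ref{lem:eps1} to pass from $\mu$ to $\mu_0$. Read literally, that last step only changes the target measure and leaves the integrating proposal at $q$ rather than $q_0$; carrying out that proposal change as well costs an extra factor of $D$ (resp.\ $C$), so the paper's argument as written delivers $e^{2\osc V_1}D^2/C$ and $e^{-2\osc V_1}C^2/D$ against the $(\mu_0,q_0)$ chain --- slightly weaker than the constants stated in \eqref{e:processbound2}. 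Your device of bounding the symmetric weight $W(x,y)=\bigl[e^{-V(x)}q(x\to y)\bigr]\wedge\bigl[e^{-V(y)}q(y\to x)\bigr]$ as a single object avoids this double counting, yields the sharper constants $Ce^{-\osc V_1}$ and $De^{\osc V_1}$, and then recovers the stated (weaker) bound via the observation $C\leq 1\leq D$, which indeed follows from both kernels integrating to one; the Barker case goes through with $W=ab/(a+b)$ by the same symmetry, monotonicity, and homogeneity. The only cosmetic remark is that your identity $\mu(dx)\,q(x\to dy)\,F(R)=Z^{-1}W\,dx\,dy$ describes the off-diagonal part of the transition kernel; the rejected mass sits on the diagonal where $|f(y)-f(x)|^2=0$, so the jumping-distance formula is unaffected, but this is worth a sentence in a written version.
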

\begin{proof}
As in the proof of Theorem \ref{thm:eps}, we begin by writing
\begin{equation*}
\begin{split}
    R(x,y)& = (V(x) - V_0(x)) - (V(y) - V_0(y)) +\log\paren{\frac{e^{-V_0(y)} q(y\to x)}{e^{-V_0(x)}q(x\to y}}\\
     & \leq  V_1(x) - V_1(y) +\log\paren{\frac{D}{C}\frac{e^{-V_0(y)} q_0(y\to x)}{e^{-V_0(x)}q_0(x\to y}}\\
     &\leq \osc V_1 + \log\frac{D}{{C}} + R_0(x,y)\,.
    \end{split}
\end{equation*}
Analogously,
\begin{equation*}
    R(x,y)\geq - \osc V_1+\log\frac{C}{D} + R_0(x,y)\,.
\end{equation*}
Consequently, for a function $f$,
\begin{align*}
\E_\mu[|f(X_{k+1})-f(X_k)|^2]  &\leq e^{\osc V_1} \frac{D}{C} \E_{\mu}[|f(X_{k+1}^{\rm p}) - f(X_k)|^2 F(R_0(X_k, X_{k+1}^{\rm p}))]\\
\E_\mu[|f(X_{k+1}-f(X_k)|^2]  &\geq e^{-\osc V_1} \frac{C}{D} \E_\mu[|f(X_{k+1}^{\rm p} - f(X_k)|^2 F(R_0(X_k, X_{k+1}^{\rm p}))]\,.
\end{align*}
Using Lemma \ref{lem:eps1} yields the result.
\end{proof}

{An analog of Corollary \ref{cor:Teps} can then be established, which we present without proof:
\begin{cor}
\label{cor:Teps2}
Under the same assumptions as Theorem \ref{thm:eps2},
\begin{equation}
e^{-3 \osc V_1}\frac{C}{D} \Gap(\mathcal{T}_0) \leq \Gap(\mathcal{T}_\eps)\leq e^{3 \osc V_1}\frac{D}{C} \Gap(\mathcal{T}_0)\,.
\end{equation}
\end{cor}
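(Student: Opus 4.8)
The plan is to mirror the proof of Corollary~\ref{cor:Teps} almost verbatim, substituting Theorem~\ref{thm:eps2} for Theorem~\ref{thm:eps} wherever the one-step jumping distance is compared across $\mu$ and $\mu_0$. The only structural change is that the clean factors $e^{\pm 2\osc V_1}$ get replaced by $e^{2\osc V_1}\tfrac{D}{C}$ and $e^{-2\osc V_1}\tfrac{C}{D}$, so the extra geometric factor $\tfrac{D}{C}$ (resp. $\tfrac{C}{D}$) is simply carried through the same division-and-infimum argument.

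First I would fix a non-constant $f$ and combine two ingredients: the jumping-distance bound \eqref{e:processbound2} of Theorem~\ref{thm:eps2}, and the variance comparison of Lemma~\ref{lem:vareps}. The latter is the key point requiring verification, and it goes through unchanged: Lemma~\ref{lem:vareps} depends only on the target measures $\mu$ and $\mu_0$ via Lemma~\ref{lem:eps1}, not on the proposal kernel $q$, so modifying $q$ to satisfy $Cq_0\le q\le Dq_0$ leaves both the inequality $\Var_\mu(f(X))\ge e^{-\osc V_1}\Var_{\mu_0}(f(X))$ and the set equivalence $L^2(\mu)=L^2(\mu_0)$ intact. Dividing the upper bound in \eqref{e:processbound2} by $2\Var_\mu(f(X))$ and inserting this variance inequality yields
\begin{equation*}
\frac{\E_{\mu}[|f(X_{k+1})-f(X_k)|^2]}{2\Var_{\mu}(f(X))} \leq e^{3\osc V_1}\frac{D}{C}\,\frac{\E_{\mu_0}[|f(X_{k+1})-f(X_k)|^2]}{2\Var_{\mu_0}(f(X))}\,,
\end{equation*}
where two of the three factors of $e^{\osc V_1}$ come from the jumping distance and the third from the variance.

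Next I would take the infimum over $f\in L^2(\mu)$. Because $L^2(\mu)$ and $L^2(\mu_0)$ coincide as sets, the infimum of the right-hand side is exactly $\Gap(\mathcal{T}_0)$, giving the upper bound $\Gap(\mathcal{T}_\eps)\le e^{3\osc V_1}\tfrac{D}{C}\Gap(\mathcal{T}_0)$. The lower bound follows symmetrically: dividing the lower bound in \eqref{e:processbound2} by $2\Var_\mu(f(X))$ and using $\Var_\mu(f(X))\le e^{\osc V_1}\Var_{\mu_0}(f(X))$ produces the prefactor $e^{-3\osc V_1}\tfrac{C}{D}$, and taking the infimum again yields $\Gap(\mathcal{T}_\eps)\ge e^{-3\osc V_1}\tfrac{C}{D}\Gap(\mathcal{T}_0)$.

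There is no genuine analytic obstacle here—the corollary is essentially a bookkeeping exercise, which is presumably why it is stated without proof. The one place demanding care is tracking the orientation of the factors: for the upper bound on the Rayleigh quotient one must pair the numerator's upper bound with the denominator's lower bound, so $\tfrac{D}{C}$ attaches to the upper estimate and $\tfrac{C}{D}$ to the lower, and the variance and jumping-distance contributions must be combined with matching direction. The only substantive thing to state explicitly is that the hypotheses of Theorem~\ref{thm:eps2} do not disturb Lemma~\ref{lem:vareps}, which holds because that lemma is a statement purely about the measures $\mu$ and $\mu_0$.
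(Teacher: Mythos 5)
Your proof is correct and is exactly the argument the paper intends: the paper explicitly omits the proof of Corollary~\ref{cor:Teps2} as the ``analog'' of Corollary~\ref{cor:Teps}, and your route --- combining \eqref{e:processbound2} with Lemma~\ref{lem:vareps} (correctly observing that the variance comparison is independent of the proposal kernel), dividing, and taking the infimum over $L^2(\mu)=L^2(\mu_0)$ --- reproduces that template with the factors $D/C$ and $C/D$ attached in the right orientation. Nothing is missing.
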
}

A method where Theorem~\ref{thm:eps2} would apply is the tamed (or truncated) MALA, \cite{bourabee2010pathwise,roberts1996geometric,hutzenthaler2012strong}.  Given a parameter $\delta>0$, one form of the tamed MALA (at $\beta=1$) is
\begin{equation}
\label{e:tamedMALA}
    X_{k+1}^{\rm p} = X_k - \frac{\sigma^2}{2}\frac{\nabla V(X_k)}{1 \vee (\delta |\nabla V(X_k)|)} + \sigma \xi_{k+1}, \quad \xi_{k+1} \sim N(0,I)\,.
\end{equation}
This has the effect of ensuring that the gradient term is never larger
than $1/\delta$, mitigating stiff regimes of the state space.

\begin{cor}
The performance, as measured by the spectral gap, of the tamed MALA on potentials of the type \eqref{e:V1}, is insensitive to $\eps$.
\end{cor}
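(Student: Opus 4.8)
The plan is to place the tamed MALA within the scope of Corollary~\ref{cor:Teps2}, whose hypothesis is precisely a two-sided comparison of the proposal kernel against an $\eps$-independent reference $q_0$. The natural reference is the tamed scheme generated by the smooth part alone, i.e.\ the proposal with drift $b_0(x)=\nabla V_0(x)/(1\vee(\delta|\nabla V_0(x)|))$; because $V_0$ carries no $\eps$-dependence, $q_0$ is manifestly $\eps$-independent. The one structural fact that should drive the whole argument is that taming bounds the drift uniformly: writing $b_\eps(x)=\nabla V_\eps(x)/(1\vee(\delta|\nabla V_\eps(x)|))$, one has $|b_\eps(x)|\le 1/\delta$ and $|b_0(x)|\le 1/\delta$ for every $x$ and every $\eps>0$. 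In contrast to plain MALA, the $O(1/\eps)$ growth of $\nabla V_1$ never reaches the proposal, since it is absorbed by the cap.

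Both proposals are Gaussian with common covariance $\sigma^2 I$ and means $x-\tfrac{\sigma^2}{2}b_\eps(x)$ and $x-\tfrac{\sigma^2}{2}b_0(x)$. First I would record the log density ratio
\begin{equation*}
\log\frac{q(x\to y)}{q_0(x\to y)}=-\tfrac12(y-x)\cdot\paren{b_\eps(x)-b_0(x)}-\tfrac{\sigma^2}{8}\paren{|b_\eps(x)|^2-|b_0(x)|^2}.
\end{equation*}
The second term is bounded by $\sigma^2/(8\delta^2)$ uniformly in $x$ and $\eps$, and the mean shift $\tfrac{\sigma^2}{2}(b_0-b_\eps)$ has magnitude at most $\sigma^2/\delta$, again uniformly in $\eps$. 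Were the first term controllable by an $\eps$-independent constant, the sandwich $Cq_0\le q\le Dq_0$ would hold with $\eps$-independent $C,D$, and Corollary~\ref{cor:Teps2} would immediately yield
\begin{equation*}
e^{-3\osc V_1}\tfrac{C}{D}\,\Gap(\mathcal{T}_0)\le\Gap(\mathcal{T}_\eps)\le e^{3\osc V_1}\tfrac{D}{C}\,\Gap(\mathcal{T}_0),
\end{equation*}
which is the asserted $\eps$-insensitivity.

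The hard part is exactly that first term, $-\tfrac12(y-x)\cdot(b_\eps(x)-b_0(x))$: it is linear in the displacement $y-x$ with a coefficient of size at most $2/\delta$, and is therefore unbounded. Consequently a literal pointwise sandwich with $\eps$-independent constants cannot hold on all of $\R^n\times\R^n$, and the same obstruction reappears in the acceptance ratio $R$. I expect this to be the crux of the proof, and I would not attack it through the raw densities but through the symmetric detailed-balance weights $e^{-V_\eps(x)}q(x\to y)\wedge e^{-V_\eps(y)}q(y\to x)$ that actually enter the Dirichlet form \eqref{e:Dirichlet2}, exploiting confinement. Since $V_0$ is trapping, $\mu$ and $\mu_0$ concentrate on compact sets, while the common Gaussian factor forces exponential decay in $|y-x|$; so on the region carrying the mass the linear term is controlled by an $\eps$-independent constant, and the large-$|y-x|$ contribution is uniformly negligible for both $\mathcal{T}_\eps$ and $\mathcal{T}_0$. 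Concretely, I would split the Dirichlet-form integral at $|y-x|\le M$ and $|y-x|>M$, bound the bulk by an $\eps$-independent comparison of weights, and dominate the tail by the Gaussian moment, so that the ratio $\E_\mu[|f(X_{k+1})-f(X_k)|^2]/(2\Var_\mu(f(X)))$ is trapped between $\eps$-independent multiples of its $\mathcal{T}_0$ counterpart. Taking the infimum over $f$ then delivers the claimed two-sided bound on $\Gap(\mathcal{T}_\eps)$.
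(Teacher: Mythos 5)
Your route diverges from the paper's at the decisive step, and the divergence is instructive. The paper does not compare the tamed proposal to a tamed-$V_0$ proposal; it compares \eqref{e:tamedq} to the zero-drift Gaussian $q_0(x\to y)\propto\exp\{-|y-x|^2/\sigma^2\}$ with a \emph{different} variance, asserts the pointwise sandwich $e^{-\sigma^4/(2\delta^2)}q_0\le q\le e^{\sigma^4/(2\delta^2)}q_0$ via the two-sided inequality $2|y-x|^2-\tfrac{\sigma^4}{2\delta^2}\le|y-x-\tfrac{\sigma^2}{2}f_\delta(x)|^2\le 2|y-x|^2+\tfrac{\sigma^4}{2\delta^2}$, and then invokes Corollary~\ref{cor:Teps2}. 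The obstruction you identify is real, and it is exactly the lower half of that displayed inequality that it defeats: with $f_\delta(x)=0$ the claim reads $2|y-x|^2-\tfrac{\sigma^4}{2\delta^2}\le|y-x|^2$, which fails for $|y-x|$ large, and more generally the reverse comparison only gives $|u-v|^2\ge\tfrac12|u|^2-|v|^2$, which forces a \emph{second}, wider reference Gaussian. So no single Gaussian $q_0$ admits a pointwise two-sided bound, your skepticism about the raw density ratio is well placed, and the paper's written proof does not actually establish the hypothesis of Corollary~\ref{cor:Teps2}.

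Your repair---work directly in the Dirichlet form \eqref{e:Dirichlet2} and split at $|y-x|\le M$---can be made to work, but as sketched it has one gap to close. On the tail you only get an \emph{additive} bound, $\int\!\int_{|y-x|>M}|f(y)-f(x)|^2F(R)\,q(x\to y)\,dy\,\mu(dx)\lesssim e^{-cM^2}\,\E_\mu[f(X)^2]$ (use $F\le1$, $|f(y)-f(x)|^2\le2f(x)^2+2f(y)^2$, and reversibility to fold the $f(y)^2$ term back); this is not a multiple of the reference Dirichlet form, so ``uniformly negligible'' must mean negligible relative to $\Gap(\mathcal{T}_0)$, not relative to $\E_{\mu_0}[|f(X_{k+1})-f(X_k)|^2]$ for each $f$ separately. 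Meanwhile the bulk comparison constant inherited from the linear cross term restricted to $|y-x|\le M$ grows like $e^{CM/\delta}$. Taking the infimum you land on $\Gap(\mathcal{T}_\eps)\ge e^{-CM}\,\Gap(\mathcal{T}_0)-C'e^{-cM^2}$ together with the mirror upper bound, and you must (i) use that the $\eps$-independent reference chain has $\Gap(\mathcal{T}_0)>0$ and (ii) choose $M$ large, exploiting that $e^{-cM^2}$ beats $e^{-CM}$, to absorb the additive error. With that balancing made explicit your argument is complete; indeed it proves the corollary by a route that survives the very obstruction on which the paper's shorter proof founders.
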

\begin{proof}
For the tamed MALA as implemented in \eqref{e:tamedMALA}, let
\begin{equation*}
     f_\delta(X_k) = - \frac{\nabla V(X_k)}{1 \vee (\delta |\nabla V(X_k)|)}\,.
\end{equation*}
Then its proposal density is
\begin{equation}
\label{e:tamedq}
    q(x\to y) \propto \exp\set{-\tfrac{1}{2\sigma^2}|y-x-\tfrac{\sigma^2}{2}f_\delta(x)|^2}\,.
\end{equation}
Since $|f_\delta(x)|\leq \delta^{-1}$,
\begin{equation}
    2|y-x|^2 - \tfrac{\sigma^4}{2\delta^2}\leq |y-x-\tfrac{\sigma^2}{2}f_\delta(x)|^2\leq 2|y-x|^2 + \tfrac{\sigma^4}{2\delta^2}\,.
\end{equation}
Setting
\begin{equation}
q_0(x\to y) \propto \exp\set{-\tfrac{1}{\sigma^2}|y-x|^2}\,,
\end{equation}
we have that the tamed MALA proposal kernel satisfies
\begin{equation}
    e^{-\frac{\sigma^4}{2\delta^2}}q_0(x\to y)\leq q(x\to y) \leq e^{\frac{\sigma^4}{2\delta^2}}q_0(x\to y)\,.
\end{equation}
Thus, Corollary~\ref{cor:Teps2} applies.
\end{proof}

\subsection{Optimal Proposal Variance for MALA.}
\label{s:mala}

{Next, we perform a formal calculation on the performance of MALA with respect to proposal variance $\sigma$ and roughness $\epsilon$.  Though this does not lead to any immediate conclusions, it may guide future analysis.}

We define $M(\sigma)$ to be the $\MSD$ obtained with $\sigma$, assuming
stationarity, $x \sim \mu$, with the Barker rule, \eqref{e:barker}.  Thus, the
optimal value of $\sigma$ can be obtained by solving $M'(\sigma)=0$. Proceeding with this strategy we have
\begin{equation}
\label{e:Msig}
    M(\sigma) = \int |x-y|^2 F(R(x,y; \sigma)) g(y;x,\sigma)dy \mu(dx)\,,
\end{equation}
where $g$ is the Gaussian density of the normal distribution $N( x - \frac{\sigma^2}{2}\nabla V(x), \sigma^2 I)$ and $\mu(dx)$ is the Boltzmann distribution.

\begin{proposition}
\label{p:Msig}
At a critical point of \eqref{e:Msig}, where $M'(\sigma)=0$,
\begin{equation}
    \label{e:Msigopt2}
    \begin{split}
    0 & = \frac{\sigma^4}{2}\underbrace{\E\bracket{|x-y|^2 F^2|\nabla V(x)|^2}}_{\equiv A} + \sigma^2 n \underbrace{\E[|x-y|^2 F]}_{= M}-\underbrace{\E[|x-y|^4 F]}_{\equiv C}\,,
    \end{split}
\end{equation}
or
\begin{equation}
\label{e:Mquad1}
    \sigma^2 =2 \set{\frac{n M}{C} + \sqrt{\paren{\frac{n M}{C}}^2 + \frac{2A}{C}}}^{-1}\,.
\end{equation}

\end{proposition}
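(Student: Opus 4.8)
The plan is to compute $M'(\sigma)$ by differentiating \eqref{e:Msig} under the integral sign, treating $y$ as the integration variable so that only $F(R(x,y;\sigma))$ and the Gaussian density $g(y;x,\sigma)$ carry $\sigma$-dependence, while $\abs{x-y}^2$ does not, and then set the result to zero. Since this is a formal calculation I would not justify the interchange of differentiation and integration. First I would make the $\sigma$-dependence explicit. Writing out the MALA proposal densities in $R(x,y)=V(x)-V(y)+\log[q(y\to x)/q(x\to y)]$ and cancelling the $\abs{x-y}^2$ terms yields
\[
R(x,y;\sigma) = V(x)-V(y) + \tfrac12(y-x)\cdot(\nabla V(x)+\nabla V(y)) + \tfrac{\sigma^2}{8}\paren{\abs{\nabla V(x)}^2 - \abs{\nabla V(y)}^2}\,,
\]
so that, with $y$ held fixed, $\partial_\sigma R = \tfrac{\sigma}{4}(\abs{\nabla V(x)}^2-\abs{\nabla V(y)}^2)$. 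For the density, writing $m = x - \tfrac{\sigma^2}{2}\nabla V(x)$ for the mean and using $\partial_\sigma m = -\sigma\nabla V(x)$, a direct computation gives
\[
\partial_\sigma \log g = -\tfrac{n}{\sigma} + \tfrac{1}{\sigma^3}\abs{y-m}^2 - \tfrac{1}{\sigma}(y-m)\cdot\nabla V(x)\,.
\]

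Next I would assemble $M'(\sigma)=0$. Using the Barker identity $F'=F(1-F)$ and the shorthand $\E[\cdot]$ for integration against $g(y;x,\sigma)\,dy\,\mu(dx)$ (with $F \equiv F(R(x,y;\sigma))$ throughout), multiplying through by $\sigma^3$ produces four terms: one proportional to $\E[\abs{x-y}^2 F(1-F)(\abs{\nabla V(x)}^2 - \abs{\nabla V(y)}^2)]$ coming from $\partial_\sigma R$; the term $-n\sigma^2\E[\abs{x-y}^2 F] = -n\sigma^2 M$; and the two terms $\E[\abs{x-y}^2 F\abs{y-m}^2]$ and $-\sigma^2\E[\abs{x-y}^2 F (y-m)\cdot\nabla V(x)]$ from $\partial_\sigma\log g$. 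Into the last two I would substitute $y - m = -(x-y) + \tfrac{\sigma^2}{2}\nabla V(x)$ and expand; the terms odd in $(x-y)$, namely $\pm\sigma^2\E[\abs{x-y}^2 F (x-y)\cdot\nabla V(x)]$, cancel between the two, leaving $C - \tfrac{\sigma^4}{4}\E[\abs{x-y}^2 F\abs{\nabla V(x)}^2]$, with $C = \E[\abs{x-y}^4 F]$.

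The crux is to show that the surviving $\sigma^4$ contributions collapse to $\tfrac{\sigma^4}{2}A$, with $A = \E[\abs{x-y}^2 F^2 \abs{\nabla V(x)}^2]$; this is where I expect the main difficulty, and it is resolved by reversibility of the Barker chain. The unnormalized measure $S(x,y)\,dx\,dy$ with $S(x,y) = F(R(x,y))\,q(x\to y)\,e^{-V(x)}$ is symmetric under $(x,y)\mapsto(y,x)$ by detailed balance, and $R(y,x)=-R(x,y)$ together with the Barker relation $F(-r)=1-F(r)$ yields the key identity
\[
\E[\abs{x-y}^2 F^2 \abs{\nabla V(x)}^2] = \E[\abs{x-y}^2 F(1-F)\abs{\nabla V(y)}^2]\,.
\]
Applying this to the $\partial_\sigma R$ term, the single-$F$ pieces $\E[\abs{x-y}^2 F\abs{\nabla V(x)}^2]$ cancel against the stray contribution from the previous step and every remaining $F(1-F)$ is converted to an $F^2$, leaving exactly $-\tfrac{\sigma^4}{2}A$. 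Collecting, $M'(\sigma)=0$ becomes $-\tfrac{\sigma^4}{2}A - n\sigma^2 M + C = 0$, i.e.\ \eqref{e:Msigopt2}. Finally \eqref{e:Mquad1} follows by reading \eqref{e:Msigopt2} as a quadratic in $\sigma^2$, taking the positive root $\sigma^2 = (-nM + \sqrt{n^2M^2 + 2AC})/A$, and rationalizing to clear the explicit $A$ from the denominator.
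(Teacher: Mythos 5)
Your proof is correct and follows essentially the same route as the paper's Appendix~\ref{s:derviative}: differentiate under the integral, use $\partial_\sigma R = \tfrac{\sigma}{4}(|\nabla V(x)|^2-|\nabla V(y)|^2)$ and the same expression for $\partial_\sigma g$, cancel the cross terms via the algebraic identity $|y-m|^2 - 2(y-m)\cdot\tfrac{\sigma^2}{2}\nabla V(x) = |x-y|^2 - \tfrac{\sigma^4}{4}|\nabla V(x)|^2$, and then invoke detailed balance together with the Barker relation $F(-r)=1-F(r)$ to convert $\E[|x-y|^2F(1-F)|\nabla V(y)|^2]$ into $A$. The only differences are cosmetic (working with $\partial_\sigma\log g$ and a slightly different grouping of the $\sigma^4$ terms), and the final quadratic and rationalization match the paper exactly.
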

While this is a formal calculation, it does not require any particular assumptions on the potential beyond the associated distribution being normalizable and having the necessary moments.
\begin{proof}
See Appendix~\ref{s:derviative} for the derivation of \eqref{e:Msigopt2}.  The expression \eqref{e:Mquad1} follows by solving for $\sigma^2$.
\end{proof}

\subsection{Insights from the high dimensional limit.}
\label{s:highd}

We recall from the introduction that for product distributions such as \eqref{e:musep}, as $n\to \infty$, the $\MSD$ per degree of freedom is given by \eqref{e:MSDdof}.  {Here, we consider the $n\to\infty$ limit at fixed $\eps$, and then examine the impact of the roughness through $\eps$.  This corresponds to a consideration of the order $\lim_{\eps\to 0}\lim_{n\to \infty}$.  Elsewhere in this manuscript, we focus on fixed $n$ while letting $\eps \to 0$.}

The optimal $\ell$ for \eqref{e:MSDdof} can be found using \eqref{e:ellstar} and  \eqref{e:Accept}.  From \eqref{e:sigmaell} we can then find the optimal choice of $\sigma$ and obtain the scaling of the $\MSD/n$ in \eqref{e:MSDdof}.  After changing coordinates to $\lambda= - \ell^{1/I}\sqrt{\mathcal{K}[v]}/2$, and optimizing in this transformed coordinate, one can deduce that the optimal $\ell^2 \sim \mathcal{K}^{-I}$.  Consequently, for potentials like \eqref{e:Vepssum}, the optimal scaling, as $n \to \infty$, is $\sigma^2\sim \paren{{n\mathcal{K}}}^{-I}$ and
\begin{itemize}
    \item[(i)] For RWM, $I=1$ and $\mathcal{K}[v_\eps]= \E_\mu[(v_\eps')^2]$, so that $\sigma^2 \sim \eps^2/n$;
    \item[(ii)] For MALA, $I=1/3$ and $\mathcal{K}[v_\eps] = \E_\mu[5 (v_\eps''')^2 + 3 (v_\eps'')^3]/48\sim \eps^{-6}$ so that $\sigma^2\sim \eps^{2}/n^{1/3}$.
\end{itemize}
In this limit, both RWM and MALA will fail to be locally robust at optimal $\sigma$ (in the sense of \eqref{e:MSDrobustness2}) as $\eps\to 0$. Thus, they will also suffer when a suboptimal value is selected for any observable.  This does not contradict the analysis of Section~\ref{s:lower}.  There, in the product case, our lower bound would have the prefactor $e^{-n \osc v_1}$, which vanishes more rapidly than the $\eps^2/n$ we have here.

\subsection{Scaling in dimension one.} \label{s:oned}

We consider the question of sampling from the distribution associated with $V(x) = \tfrac{1}{2}\eps^{-1}x^2$ in $n=1$, using MALA.  While this may seem odd, as this corresponds to a Gaussian distribution, it reveals a distinct scaling in low dimensions.  It can be interpreted as the necessary scaling to efficiently sample individual modes of the highly multi-modal landscape in Figure~\ref{f:rough1}(b), or alternatively, as the analog of the stiff differential equation problem $\dot{x} = -\eps^{-1} x$,  $0<\eps\ll 1$.

In this case,
\begin{equation}
    X^{\mathrm{p}}_{k+1} = X_{k} -\tfrac{\sigma^2}{2} \eps^{-1} X_k + \sigma\xi_{k+1}\,,
\end{equation}
and the term in the accept/reject rule is
\begin{equation}
\label{e:scalarR}
    R(x,y) = \frac{\sigma^2 }{8\eps^2}(x^2 - y^2)\,.
\end{equation}

With the aid of a computer algebra system (see, also, Appendix~\ref{s:dimensionone}) we conclude
\begin{equation}\label{e:MSDone}
\begin{split}
    \mathrm{MSD}_1 &= \eps\frac{2\delta}{\pi(4 + \delta(-2+\delta))}\set{(8+\delta^3)\arctan\paren{\sqrt{\frac{8}{\delta^3}}} - 2 \sqrt{2}\delta^{3/2}}\\
    &=  \eps m(\delta)\,.
\end{split}
\end{equation}
A consequence of this computation is that it provides an explicit example for which MALA will not satisfy the local robustness criterion, even with optimal $\sigma$, \eqref{e:MSDrobustness2}.

The function $m(\delta)$ has a single maximum
(see Figure~\ref{fig:mfunction}), and the maximum value is achieved at $\delta_\star = 1.27797$, and $m(\delta_\star) = 1.8494$.  Thus, the optimal scaling for the time step is
\begin{equation*}
\sigma = \sqrt{{2\delta_\star}{\eps}}\approx{1.59873}{\sqrt{\eps}}\,.
\end{equation*}
This scaling is a different from the scaling found in Section~\ref{s:mala}.  We note that this optimal choice is smaller than the Euler-Maruyama stability threshold  which requires $\sigma\leq 2 \sqrt{\eps}$.  Additionally, the acceptance rate at this optimal value is $0.70$, which is quite different than the $n\to \infty$ acceptance rate of $0.57$.

\subsection{Poorly scaled proposals.}
\label{s:badscaling}

{We show rigorously that, under particular assumptions, if $\sigma$ is improperly scaled, MALA will fail to be robust in the sense of \eqref{e:Globalrobustness}.   Our approach is based on the method of \cite{Livingstone1908} (see, also, \cite{Rosenthal2003,Zanella:201733a}) using the relationship between the Dirichlet form and the spectral gap, \eqref{e:GapT}.}

{As in \cite{Livingstone1908}, the interval $\Lambda$
is bounded in terms of the {\it conductance} of a set, $K$, with $\mu(K)\in (0,1/2)$.  Using the observable
\begin{equation}
\label{e:conductobs}
f(x) = \frac{1}{\sqrt{(1-\mu(K))\mu(K)}}(1_K(x) - \mu(K))\,,
\end{equation}
we immediately have for a method with the transition operator $\mathcal{T}$
\begin{equation}
    \Gap(\mathcal{T})\leq 2 \P_\mu(X_{k+1}\in K^c \mid X_k \in K)\,.
\end{equation}}

{\begin{proposition}
\label{p:malascaling}
Assume that
\begin{enumerate}

    \item  The potential is $V(x)= V_0(x) + V_1(x)$, where $V_0$ is trapping and  $V_1(x) = V_1(x, x/\eps)$ is bounded.

    \item A concentration inequality holds for $\mu$
    \begin{equation}
    \label{e:concentration}
        \P_\mu(|X|\geq t) \leq C_0 e^{- \kappa t^{p}}
    \end{equation}
    where $C_0$, $\kappa$, and $p$ are independent of $\eps$.

    \item There exists  $m>0$ such that for all $\eps$ sufficiently small, the set $K = \{x\,|\,|x|>m\}$ satisfies
    \begin{equation}
    \label{e:muKbound}
        0< \underline{\mu}_K<\mu(K)<\overline{\mu}_K<\tfrac{1}{2}\,,
    \end{equation}
    where $\underline{\mu}_K$ and $\overline{\mu}_K$ are independent of $\eps$.

    \item The gradient $\nabla V_0$ satisfies a  growth bound with an exponent $\delta > 0$, such that for all $x$
    \begin{equation}
    \label{e:growth}
        |\nabla V_0(x)|\leq D_0 + D_1 |x|^\delta\,.
    \end{equation}

    \item Let $\alpha\geq 0$, $\theta\in \R$, and $\gamma>0$ satisfy
    \begin{equation}
       \label{e:powers}
        2\alpha + \delta\gamma + \theta <1\,.
    \end{equation}
    \end{enumerate}

    Let $\sigma = \eps^{\alpha}$, and define the sets
    \begin{align}
    \label{e:Kepsgamma}
    K_{\eps,\gamma} & = \{m<|x|\leq \eps^{-\gamma}\}\subset K\,,\\
    \label{e:largegradeints}
        L_{\eps,\theta} &= \{x\mid |\nabla V_1(x)|> \eps^{\theta-1}\}\,.
    \end{align}
    Then it holds
    \begin{equation}
    \label{e:Interval1}
    \begin{split}
        \Gap(\mathcal{T}_{\rm MALA})
        &\lesssim e^{{- \frac{1}{32}  \eps^{-2(1-\alpha-\theta)}}}+ e^{-\kappa\eps^{-p\gamma }}\\
        &\quad + \P_\mu(X_{k+1}\in K^c\cap L_{\eps, \theta}^c, X_k \in K\cap L_{\eps,\theta}^c)\,.
        \end{split}
    \end{equation}
\end{proposition}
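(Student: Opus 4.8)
The plan is to start from the conductance bound stated immediately above the proposition. Since the MALA chain is $\mu$-reversible and $\mu(K)\geq \underline{\mu}_K$ by assumption (3), the standardized indicator \eqref{e:conductobs} gives
\begin{equation*}
\Gap(\mathcal{T}_{\rm MALA}) \leq 2\,\P_\mu\paren{X_{k+1}\in K^c \mid X_k\in K} \leq \frac{2}{\underline{\mu}_K}\,\P_\mu\paren{X_k\in K,\,X_{k+1}\in K^c}\,,
\end{equation*}
so it suffices to bound the joint crossing probability by the three advertised terms, up to an $\eps$-independent constant. I would decompose the crossing event according to (a) whether $X_k$ lies in the annulus $K_{\eps,\gamma}$ or in the far field $K\setminus K_{\eps,\gamma}=\{|x|>\eps^{-\gamma}\}$, and (b) the membership of the two endpoints in the large-gradient set $L_{\eps,\theta}$.

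The far field is immediate: $\P_\mu(X_k\in K\setminus K_{\eps,\gamma})\le \P_\mu(|X|>\eps^{-\gamma})\le C_0 e^{-\kappa\eps^{-p\gamma}}$ by the concentration inequality \eqref{e:concentration}, producing the second term. The residual piece, in which $X_k\in K_{\eps,\gamma}\cap L_{\eps,\theta}^c$ and the accepted state lands in $K^c\cap L_{\eps,\theta}^c$, is contained in $\{X_k\in K\cap L_{\eps,\theta}^c,\, X_{k+1}\in K^c\cap L_{\eps,\theta}^c\}$ and is left as the third term. The crux is therefore the two pieces in which some endpoint carries a large gradient.

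For the piece $X_k\in K_{\eps,\gamma}\cap L_{\eps,\theta}$, a crossing forces acceptance, so the event is contained in $\{X_{k+1}^{\mathrm{p}}\in K^c\}$ and I may drop the acceptance factor. Write $\mu_{\mathrm{det}}(x)=x-\tfrac{\sigma^2}{2}\nabla V(x)$ for the proposal mean. On $L_{\eps,\theta}$ the growth bound \eqref{e:growth} gives $|\nabla V_0|\lesssim \eps^{-\delta\gamma}$ while $|\nabla V_1|>\eps^{\theta-1}$, and since \eqref{e:powers} forces $\delta\gamma+\theta<1$ the oscillatory gradient dominates, so $\tfrac{\sigma^2}{2}|\nabla V|\gtrsim \eps^{2\alpha+\theta-1}$; the inequality \eqref{e:powers} is moreover designed so that this drift scale dominates the spatial extent $\eps^{-\gamma}$ of $K_{\eps,\gamma}$, whence $|\mu_{\mathrm{det}}|\gtrsim \eps^{2\alpha+\theta-1}$. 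To land the Gaussian $N(\mu_{\mathrm{det}},\sigma^2 I)$ inside the ball $K^c=\{|x|\le m\}$ the noise must cancel this drift; projecting $\xi$ onto $\hat u=\mu_{\mathrm{det}}/|\mu_{\mathrm{det}}|$ reduces the estimate to a one-dimensional tail,
\begin{equation*}
\P\paren{X_{k+1}^{\mathrm{p}}\in K^c \mid X_k = x} \le \P\paren{\sigma\,\xi\cdot\hat u \le -\paren{|\mu_{\mathrm{det}}|-m}} \le \exp\set{-\frac{(|\mu_{\mathrm{det}}|-m)^2}{2\sigma^2}} \le e^{-\frac{1}{32}\eps^{-2(1-\alpha-\theta)}}\,,
\end{equation*}
uniformly over $x\in K_{\eps,\gamma}\cap L_{\eps,\theta}$ for $\eps$ small, giving the first term with its constant. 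The remaining piece has the large gradient at the destination, $X_{k+1}\in K^c\cap L_{\eps,\theta}$ with $X_k\in K_{\eps,\gamma}\cap L_{\eps,\theta}^c$; here I would invoke $\mu$-reversibility to rewrite it as $\P_\mu(X_k\in K^c\cap L_{\eps,\theta},\,X_{k+1}\in K_{\eps,\gamma}\cap L_{\eps,\theta}^c)$, which now places the large gradient at the starting state with $|X_k|\le m$, and the identical projected Gaussian estimate applies.

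The main obstacle is the bookkeeping of competing $\eps$-scales in the large-gradient pieces: one must verify that the deterministic drift $\eps^{2\alpha+\theta-1}$ dominates both the $V_0$-gradient growth $\eps^{-\delta\gamma}$ and the annular scale $\eps^{-\gamma}$, which is precisely what the exponent inequality \eqref{e:powers} is meant to encode, and that the residual deviation still exceeds $\tfrac14\eps^{2\alpha+\theta-1}$ so the Gaussian tail yields the constant $\tfrac{1}{32}$. A secondary but essential subtlety is that the destination-gradient case should be routed through reversibility rather than estimated by the acceptance ratio $R(x,y)$ of \eqref{e:R} directly: when both endpoints carry comparable gradients the curvature contribution $\tfrac{\sigma^2}{8}(|\nabla V(x)|^2-|\nabla V(y)|^2)$ nearly cancels, so the clean large-deviation bound is only available once the large gradient has been placed at the origin of the proposal.
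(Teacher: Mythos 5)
Your proposal is correct and follows essentially the same route as the paper: the identical conductance bound, the same decomposition of the crossing event into far field (handled by the concentration inequality), large-gradient endpoints (handled by dropping the acceptance factor and a Gaussian tail bound on the proposal, with reversibility used to move the large gradient to the starting state), and the residual small-gradient piece left as the third term. The only cosmetic difference is that you obtain the tail bound by projecting the noise onto the drift direction rather than via the paper's triangle-inequality lower bound on $|X_{k+1}^{\mathrm{p}}|$ followed by a tail bound on $|\xi_{k+1}|$; both yield the same $e^{-\frac{1}{32}\eps^{-2(1-\alpha-\theta)}}$.
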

In \eqref{e:Interval1}, the relation $\lesssim$ is introduced.  Generically, if $a\lesssim b$, then there is a constant, $C$, independent of $b$, such that $a\leq C b$.

Before proving the result, a few remarks are in order.
\begin{enumerate}[(i)]
    \item The concentration inequality for $\mu$ will hold provided $\mu_0$, the distribution associated with $V_0$, has a concentration inequality.  This is a consequence of Corollary~\ref{cor:eps}.

    \item The bound \eqref{e:muKbound} will similarly hold provided such a condition holds for $V_0$.  Again, this is a consequence of Corollary~\ref{cor:eps} by selecting a set $K$ for which $0<\mu_0(K)< \tfrac{1}{2}e^{-\osc V_1}$.

    \item The set $L_{\eps, \theta}$ captures the points in  space where $\nabla V_1$  will be large.  To obtain a rate, it is necessary to know the measure of $L_{\eps,\theta}^c$, requiring additional details on the structure of the potential.  In particular, it is essential to estimate
    \begin{equation}
    \label{e:smallset}
        \P_\mu(X_{k+1}\in K^c\cap L_{\eps, \theta}^c, X_k \in K\cap L_{\eps,\theta}^c)\,,
    \end{equation}
    which is clearly related to the conductance of set $K$ with the additional constraints that $X_k$ and $X_{k+1}$ both reside in $ L_{\eps, \theta}^c$, the set where the gradient is  ``small''.

    \item To see how the set \eqref{e:smallset} can become small,
    consider the case that $V_0= x^2/2$ and $V_1 = \cos(x/\eps)$.   Then $\nabla V_1(x) = -\sin(x/\eps)/\eps$.  For any $\theta>0$ and all $\eps>0$ sufficiently small
    \begin{equation*}
    \label{e:exampleL}
        L_{\eps,\theta}^c = \set{x \mid |\sin(x/\eps)|\leq \eps^\theta}\subset \bigcup_{k \in \Z} [-\eps^{1+\theta}, \eps^{1+\theta}] + \eps \pi k\,.
    \end{equation*}
    Then, recalling that $g(y;x,\sigma)$ is the Gaussian density of the proposal
    \begin{equation*}
        \P_\mu(X_{k+1}\in K^c\cap L_{\eps, \theta}^c, X_k \in K\cap L_{\eps,\theta}^c) \leq \int_{K\cap L_{\eps,\theta}^{c}} \mu(dx)\int_{K^c\cap L_{\eps,\theta}^{c}} g(y;x,\sigma)\,dy\,.
    \end{equation*}
    For any $x\in K\cap L_{\eps,\theta}^{c}$
    \begin{equation}
    \label{e:example1}
    \begin{split}
        \int_{K^c\cap L_{\eps,\theta}^{c}} g(y;x,\sigma)dy&\lesssim \sum_{|\eps k\pi|\leq m} \int_{[-\eps^{1+\theta}, \eps^{1+\theta}] + \eps \pi k} \frac{1}{\sqrt{2\pi \sigma^2}}dy\\
        &\lesssim \eps^{1+\theta - \alpha}\sum_{|\eps k\pi|\leq m} 1 \lesssim \eps^{\theta-\alpha}\,.
    \end{split}
    \end{equation}
    Additionally,
    \begin{equation}
    \label{e:example2}
    \begin{split}
        \mu(K\cap L_{\eps,\theta}^c)\lesssim \mu_0(K\cap L_{\eps,\theta}^c)&\lesssim\sum_{|\eps k \pi|>m}\int_{[-\eps^{1+\theta}, \eps^{1+\theta}] + \eps \pi k} \frac{1}{\sqrt{2\pi}}e^{-x^2/2}dx\\
        &\lesssim \sum_{|\eps k \pi|>m} \eps^{1+\theta} e^{- (\eps k \pi)^2/2}\\
        &\lesssim \eps^{1+\theta}\int_{\frac{m}{\eps \pi}}^\infty e^{- (\eps k \pi)^2/2}dk\lesssim   \eps^{\theta}\,.
        \end{split}
    \end{equation}
    Combining \eqref{e:example1} and \eqref{e:example2}, \eqref{e:smallset} is thus bounded as
    \begin{equation*}
        \P_\mu(X_{k+1}\in K^c\cap L_{\eps, \theta}^c, X_k \in K\cap L_{\eps,\theta}^c)\lesssim \eps^{2\theta - \alpha}
    \end{equation*}
    For $\theta >1/2$ and $\gamma$ and $\alpha$ sufficiently small the above expression is $\ll \eps$.

    \item As the result requires $\alpha<1/2$,  $\sigma\gg \sqrt{\epsilon}$ in this regime.

\end{enumerate}

\begin{figure}
    \centering
    \includegraphics[height=4cm]{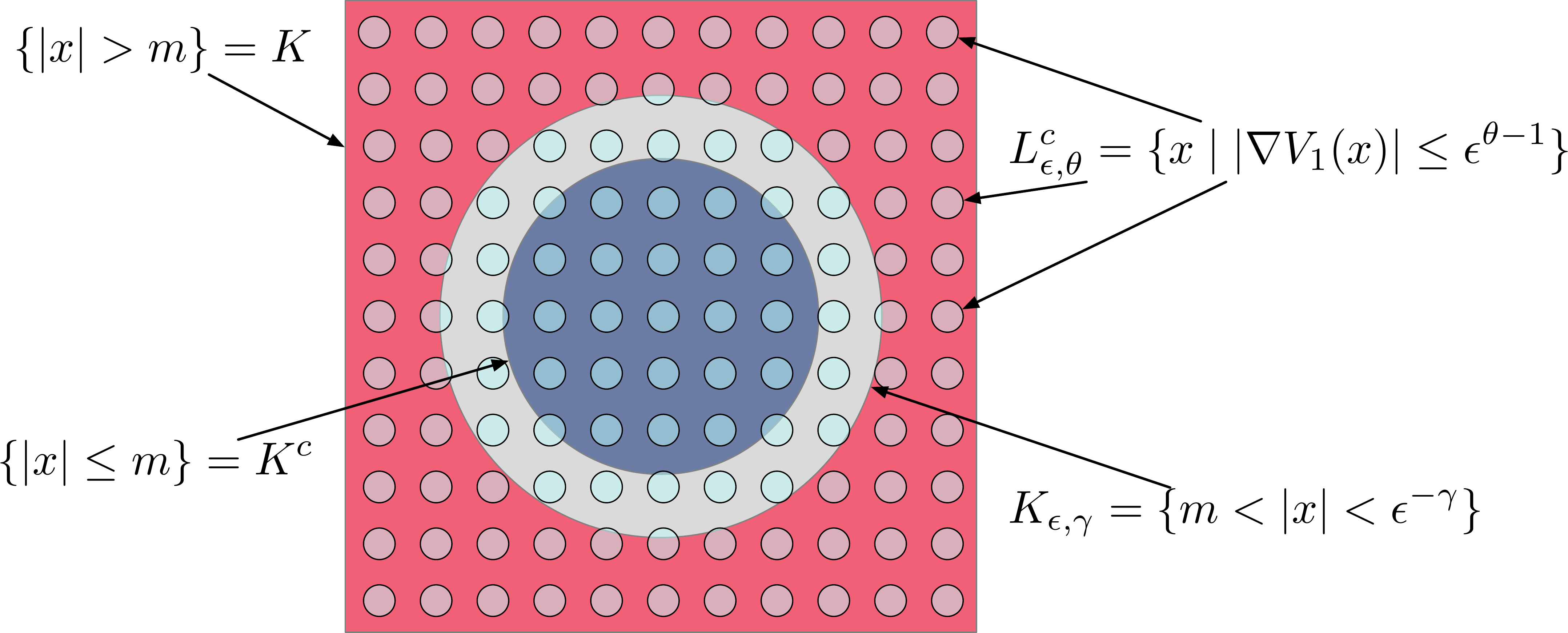}
    \caption{A diagram of the sets discussed in
    Proposition~\ref{p:malascaling}.   For the sake of clarity, we have plotted $L_{\eps,\theta}^c$, and not $L_{\eps,\theta}$, the set where the gradient of $V_1$ is ``small''.}
    \label{fig:my_label}
\end{figure}

\medskip
\begin{proof}{{\bf Proof of Proposition~\ref{p:malascaling}}.}

Since $\mu(K)$ is, by assumptions, uniformly bounded away from $0$ and $1/2$ as $\eps\to 0$, it is sufficient to study $\P_\mu(X_{k+1}\in K^c, X_k \in K)$.  For all sufficiently small $\eps$ we can write
\begin{equation*}
\begin{split}
    \P_\mu(X_{k+1}\in K^c, X_k \in K) &=  \P_\mu(X_{k+1}\in K^c, X_k \in K_{\eps,\gamma}) \\
    &+  \P_\mu(X_{k+1}\in K^c, |X_k|> \eps^{-\gamma}) \\
    &\leq  \P_\mu(X_{k+1}\in K^c, X_k \in K_{\eps,\gamma}) + C_0e^{-\kappa \eps^{-\gamma p }}\,,
\end{split}
\end{equation*}
where we have used the assumed concentration inequality at the end.

Next, we divide the set depending on whether $X_k$ and $X_{k+1}$ lie in the high or low gradient sets
\begin{equation}
\label{e:Psplit1}
    \begin{split}
         \P_\mu(X_{k+1}\in K^c, X_k \in K_{\eps,\gamma})  &= \P_\mu(X_{k+1}\in K^c, X_k \in K_{\eps,\gamma}\cap L_{\eps,\theta})\\
            &+\P_\mu(X_{k+1}\in K^c\cap L_{\eps,\theta}\,, X_k \in K_{\eps,\gamma}\cap L_{\eps,\theta}^c)\\
         &+\P_\mu(X_{k+1}\in K^c\cap L_{\eps,\theta}^c, X_k \in K_{\eps,\gamma}\cap L_{\eps,\theta}^c)\,.
    \end{split}
\end{equation}
We consider the first of the three terms on the right hand side of \eqref{e:Psplit1}:
\begin{equation*}
\begin{split}
 \P_\mu(X_{k+1}\in K^c,  X_k &\in K_{\eps,\gamma}\cap L_{\eps,\theta}) \\
 &= \P_\mu(X_{k+1}\in K^c\mid  X_k \in K_{\eps,\gamma}\cap L_{\eps,\theta})\P(X_k \in K_{\eps,\gamma}\cap L_{\eps,\theta})\\
 &\leq \P_\mu(X_{k+1}\in K^c\mid  X_k \in K_{\eps,\gamma}\cap L_{\eps,\theta})\\
    &\leq   \P_\mu(X_{k+1}^{\rm p}\in K^c\mid X_k \in K_{\eps,\gamma}\cap L_{\eps,\theta})\,.
\end{split}
\end{equation*}
Conditioned on $X_k \in  K_{\eps,\gamma}\cap L_{\eps,\theta}$,
\begin{equation*}
\begin{split}
    |X_{k+1}^{\rm p}| &= |X_k - \tfrac{\sigma^2}{2}\nabla V_0(X_k) - \tfrac{\sigma^2}{2}\nabla V_1(X_k) + \sigma \xi_{k+1}|\\
    &\geq \tfrac{\sigma^2}{2}|\nabla V_1(X_k)| - |X_k| - \tfrac{\sigma^2}{2}|\nabla V_0(X_k)|  - \sigma|\xi_{k+1}|\\
    &\geq \tfrac{1}{2}\eps^{2\alpha +\theta - 1} - \eps^{-\gamma} - \tfrac{D_0}{2}\eps^{2\alpha} - \tfrac{D_1}{2}\eps^{2\alpha - \delta\gamma} - \eps^{\alpha}|\xi_{k+1}| \,,
\end{split}
\end{equation*}
therefore
\begin{equation*}
\begin{split}
    &\P_\mu(m\geq |X_{k+1}^{\rm p}|\mid X_k \in K_{\eps,\gamma}\cap L_{\eps,\theta})\\
    &\leq \P(m \geq \tfrac{1}{2}\eps^{2\alpha +\theta - 1} - \eps^{-\gamma} - \tfrac{D_0}{2}\eps^{2\alpha} - \tfrac{D_1}{2}\eps^{2\alpha - \delta\gamma} - \eps^{\alpha}|\xi_{k+1}| )\\
    &= \P(|\xi_{k+1}|\geq \tfrac{1}{2}\eps^{\alpha +\theta - 1} - \eps^{-\gamma-\alpha} - \tfrac{D_0}{2}\eps^{\alpha} - \tfrac{D_1}{2}\eps^{\alpha - \delta\gamma} - \eps^{-\alpha}m)\\
    &= \P(|\xi_{k+1}|\geq\eps^{-(1-\alpha-\theta)}(\tfrac{1}{2}-\eps^{1-\theta - 2\alpha-\delta\gamma}-\tfrac{D_0}{2}\eps^{1-\theta}-\tfrac{D_1}{2}\eps^{1-\theta-\delta\gamma} - \eps^{1-\theta - 2\alpha}m))\,.
\end{split}
\end{equation*}
By our assumptions on the exponents, for all $\eps$ small enough,
\begin{equation*}
    \P_\mu(m\geq |X_{k+1}^{\rm p}|\mid X_k \in K_{\eps,\gamma}\cap L_{\eps,\theta})\leq \P(|\xi_{k+1}|\geq \tfrac{1}{4}\eps^{-(1-\alpha-\theta)})\lesssim e^{-\frac{1}{32}\eps^{-2(1-\alpha-\theta)}}\,,
\end{equation*}
where we have a  tail inequality on $N(0,I_{n})$.

For the second term in \eqref{e:Psplit1} we use reversibility to get
\begin{equation*}
    \P_\mu(X_{k+1}\in K^c\cap L_{\eps,\theta}, X_k \in K_{\eps,\gamma}\cap L_{\eps,\theta}^c) = \P_\mu( X_{k+1} \in K_{\eps,\gamma}\cap L_{\eps,\theta}^c,X_{k}\in K^c\cap L_{\eps,\theta})\,.
\end{equation*}
Using the same approach as above we have
\begin{equation*}
    \begin{split}
    &\P_\mu( X_{k+1} \in K_{\eps,\gamma}\cap L_{\eps,\theta}^c\mid X_{k}\in K^c\cap L_{\eps,\theta})\\
    &\leq \P_\mu( \eps^{-\gamma}\geq |X_{k+1}^{\rm p}|\mid X_{k}\in K^c\cap L_{\eps,\theta})\\
    &\leq \P( \eps^{-\gamma}\geq \tfrac{1}{2}\eps^{2\alpha +\theta - 1} - \eps^{-\alpha}m - \tfrac{D_0}{2}\eps^{2\alpha} - \tfrac{D_1}{2}\eps^{2\alpha}m^{\delta} - \eps^{\alpha}|\xi_{k+1}|)\\
    &= \P( |\xi_{k+1}|\geq \tfrac{1}{2}\eps^{\alpha +\theta - 1} - \eps^{-\alpha}m - \tfrac{D_0}{2}\eps^{\alpha} - \tfrac{D_1}{2}\eps^{\alpha}m^{\delta} - \eps^{-\alpha-\gamma})\\
    & = \P( |\xi_{k+1}|\geq \eps^{-(1-\alpha-\theta)}(\tfrac{1}{2} -  \eps^{1-2\alpha-\theta}m - \tfrac{D_0}{2}\eps^{1-\theta} - \tfrac{D_1}{2}\eps^{1-\theta}m^{\delta} - \eps^{1-\theta-2\alpha-\gamma}))\,.
    \end{split}
\end{equation*}
Again, by our assumptions on the exponents and a tail inequality on the Gaussian
\begin{equation*}
\P_\mu( X_{k+1} \in K_{\eps,\gamma}\cap L_{\eps,\theta}^c\mid X_{k}\in K^c\cap L_{\eps,\theta})\leq\P(|\xi_{k+1}|\geq \tfrac{1}{4}\eps^{-(1-\alpha-\theta)})\lesssim e^{-\frac{1}{32}\eps^{-2(1-\alpha-\theta)}} \,.
\end{equation*}
Combining these estimates we obtain \eqref{e:Interval1}.

\end{proof}

The next proposition provides an estimate of the last term in \eqref{e:Interval1} for a particular class of potentials.
\begin{thm}
\label{t:malascaling2}
Assume that
\begin{enumerate}
    \item There exists $p\geq 1$ and constants $V_{\min}$ and  $C_p>0$
\begin{equation}
\label{e:V0lowerbound}
    V_0(x)\geq V_{\min}+ C_p \sum_{i=1}^n |x_i|^p,
\end{equation}
    \item The growth bound of Proposition \ref{p:malascaling} holds for $\nabla V_0$.
    \item The potential $V_1$ takes the form
    \begin{subequations}
    \begin{gather}
    \label{e:V1assumptions}
        V_1(x) = \sum_{i=1}^n A_i \cos\paren{\frac{x}{\ell_i \eps}}\\
        0<\underline{A} \leq |A_i|\leq \overline{A}<\infty\,, \quad 0<\underline{\ell}\leq \ell_i\leq \overline{\ell}<\infty\,.
    \end{gather}
    \end{subequations}

\item $\alpha$, $\gamma$, $\delta$, and $\theta$ satisfy \eqref{e:powers}.
\end{enumerate}

Then there exists a constant $\kappa$ such that
\begin{equation}
\label{e:Inteval2}
    \Gap(\mathcal{T}_{\rm MALA})\lesssim e^{-\frac{1}{2}\kappa \eps^{-\gamma p}} + e^{-\frac{1}{32}\eps^{-2(1-\alpha-\theta})} + \eps^{n(2\theta-\alpha)}\,.
\end{equation}

\end{thm}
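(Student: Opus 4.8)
The plan is to obtain \eqref{e:Inteval2} by verifying that the hypotheses of Proposition~\ref{p:malascaling} hold under the present assumptions, applying that proposition to reduce everything to its residual term \eqref{e:smallset}, and then estimating \eqref{e:smallset} explicitly using the cosine structure \eqref{e:V1assumptions}. First I would check the hypotheses of Proposition~\ref{p:malascaling}. The lower bound \eqref{e:V0lowerbound} together with boundedness of $V_1$ and Corollary~\ref{cor:eps} yields a concentration inequality of the form \eqref{e:concentration}: since $e^{-V_0(x)}\leq e^{-V_{\min}}e^{-C_p\sum_i|x_i|^p}$ and $\mu\leq e^{\osc V_1}\mu_0$, the measure $\mu$ inherits the stretched-exponential tails of the product measure $\propto e^{-C_p\sum_i|x_i|^p}$, giving $\P_\mu(|X|\geq t)\lesssim e^{-\frac{1}{2}\kappa t^{p}}$ for a suitable $\kappa$ (the factor $\tfrac12$ absorbing the norm-equivalence and oscillation constants). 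Choosing $m$ so that $0<\mu_0(K)<\tfrac{1}{2}e^{-\osc V_1}$ for $K=\{|x|>m\}$ secures \eqref{e:muKbound} through Corollary~\ref{cor:eps}; the growth bound and the exponent condition \eqref{e:powers} are assumed directly. Proposition~\ref{p:malascaling} then reduces the claim to showing that \eqref{e:smallset} is $\lesssim \eps^{n(2\theta-\alpha)}$.

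The core of the argument is this $n$-dimensional estimate of \eqref{e:smallset}. Because $\partial_{x_i}V_1 \propto \eps^{-1}\sin(x_i/(\ell_i\eps))$, the low-gradient set satisfies $L_{\eps,\theta}^c\subset \{x : |\sin(x_i/(\ell_i\eps))|\lesssim \eps^{\theta},\ i=1,\dots,n\}$, a product of unions of intervals of width $\sim\eps^{1+\theta}$ spaced $\sim\eps$ apart, exactly as in the one-dimensional computation \eqref{e:example1}--\eqref{e:example2} but now in each coordinate. I bound \eqref{e:smallset} by $\int_{K\cap L_{\eps,\theta}^c}\mu(dx)\int_{K^c\cap L_{\eps,\theta}^c} g(y;x,\sigma)\,dy$. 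For the inner integral I dominate the Gaussian by its peak $(2\pi\sigma^2)^{-n/2}\sim\eps^{-n\alpha}$ (the MALA drift is irrelevant to this crude bound) and enclose $K^c$ in the box $[-m,m]^n$; the one-dimensional measure of the low-gradient set in each coordinate is $\lesssim \eps^{-1}\cdot\eps^{1+\theta}=\eps^{\theta}$, so $\mathrm{vol}(K^c\cap L_{\eps,\theta}^c)\lesssim \eps^{n\theta}$ and the inner integral is $\lesssim\eps^{n(\theta-\alpha)}$. For the outer factor I again invoke Corollary~\ref{cor:eps} and the density bound $e^{-V_0(x)}\leq e^{-V_{\min}}e^{-C_p\sum_i|x_i|^p}$ to dominate $\mu$ by a product measure, reducing $\mu(K\cap L_{\eps,\theta}^c)$ to one-dimensional sums $\sum_k \eps^{1+\theta}e^{-C_p|\ell_i\eps\pi k|^{p}}$, which are Riemann sums converging to $\eps^{\theta}\int e^{-C_p|z|^{p}}\,dz$; hence $\mu(K\cap L_{\eps,\theta}^c)\lesssim \eps^{n\theta}$. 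Multiplying the two factors gives the quantity in \eqref{e:smallset} a bound $\lesssim \eps^{n\theta}\cdot\eps^{n(\theta-\alpha)}=\eps^{n(2\theta-\alpha)}$.

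Substituting this into \eqref{e:Interval1} and relabeling the concentration constant as $\tfrac{1}{2}\kappa$ yields \eqref{e:Inteval2}. The main obstacle is precisely the $n$-dimensional estimate of \eqref{e:smallset}: the delicate point is that $\mu_0$ need not tensorize, so the product structure that makes each coordinate contribute a factor $\eps^{\theta}$ (once in the outer measure and once in the inner Gaussian volume) must be recovered from the lower bound \eqref{e:V0lowerbound}, which dominates the density by a genuine product, rather than from any exact factorization. Controlling the interaction between the spherical geometry of $K$ and $K^c$ and the product geometry of $L_{\eps,\theta}^c$, and checking that the counting and Riemann-sum estimates survive simultaneously in all $n$ coordinates, is where the care is required; everything else follows mechanically from Proposition~\ref{p:malascaling} and Corollary~\ref{cor:eps}.
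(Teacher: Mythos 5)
Your proposal is correct and follows essentially the same route as the paper's proof: verify the hypotheses of Proposition~\ref{p:malascaling} via Corollary~\ref{cor:eps} and the lower bound \eqref{e:V0lowerbound}, then bound \eqref{e:smallset} by factoring $L_{\eps,\theta}^c$ into per-coordinate unions of intervals of width $\sim\eps^{1+\theta}$, estimating the Gaussian integral over $K^c\cap L_{\eps,\theta}^c$ by $\eps^{n(\theta-\alpha)}$ and the measure $\mu(K\cap L_{\eps,\theta}^c)$ by $\eps^{n\theta}$ using the product domination of the density. The only cosmetic difference is that the paper tracks the constraint $K\subset\bigcup_j\{|x_j|>m/\sqrt{n}\}$ explicitly in the outer measure, whereas you simply drop it, which yields the same power of $\eps$.
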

Before proving this result we make a few remarks:
\begin{enumerate}[(i)]
    \item If $V_0$ is harmonic it will satisfy these assumptions with $p=2$ and $\delta = 1$.

    \item As long as $\alpha<2\theta$, the performance will deteriorate as $\eps\to 0$.  By \eqref{e:powers}, this necessitates $\alpha< 2/5$.

    \item In the numerical experiments discussed below,  $\MSDO_1$ was  found to be maximized when $\sigma\sim \sqrt{\eps}$, while in higher dimensions, $\MSD$  was maximized when $\sigma\sim \eps$.   Thus, there is a ``gap'' between values of $\alpha$ for which Theorem \ref{t:malascaling2} predicts poor performance, and values for which we numerically observe peak performance.  We interpret this as a gap in the analysis because, even at the optimal choice of $\sigma$, our results show $\MSD\to 0$ as $\eps\to 0$, implying the method is not locally (or globally) robust in the sense of \eqref{e:MSDrobustness2}.

    \item As $n$ increases, the gap vanishes more and more rapidly as $\eps\to 0$.
\end{enumerate}
\medskip

\begin{proof}{{\bf Proof of Theorem \ref{t:malascaling2}}.}

A concentration inequality will obviously hold for $\mu$, as such an inequality holds for $\mu_0$,
with $\P_{\mu_0}(|X|>t)\lesssim e^{-\frac{1}{2}\kappa t^p}$; this is a consequence of \eqref{e:V0lowerbound}. The constant $\kappa$ is related to $C_p$, $p$, and the dimension.

Since the concentration inequality holds, and $V_0$ is trapping while $V_1$ is bounded, Corollary~\ref{cor:eps} ensures that a set $K = \{|x|>m\}$ exists and satisfies \eqref{e:muKbound}.  Thus, the assumptions of
Proposition~\ref{p:malascaling} hold.

Since
\begin{equation*}
\P_\mu(X_{k+1}\in K^c\cap L_{\eps, \theta}^c, X_k \in K\cap L_{\eps,\theta}^c)\leq \int_{K\cap L_{\eps,\theta}^c}\mu(dx) \int_{K^c\cap L_{\eps, \theta}^c}g(y;x,\sigma)dy\,,
\end{equation*}
it will be sufficient to estimate $\mu(K\cap L_{\eps,\theta}^c)$ and the integral $\int_{K^c\cap L_{\eps, \theta}^c}g(y;x,\sigma)dy$ for $x\in K\cap L_{\eps,\theta}^c$.

We first bound the set $L_{\eps,\theta}^c$.  For all
$\eps$ small enough
\begin{equation*}
    L_{\eps,\theta}^c \subset \prod_{i=1}^n \set{\abs{\frac{A_i}{\ell_i}\sin\paren{\frac{x_i}{\ell_i \eps}}}\leq  \eps^\theta}\subset \prod_{i=1}^n \bigcup_{k_i \in \Z} \underbrace{[- \frac{\ell_i^2}{|A_i|}\eps^{1+\theta}, \frac{\ell_i^2}{|A_i|}\eps^{1+\theta}] + \eps \ell_i \pi k_i}_{\equiv B_{i,k_i}}\,.
\end{equation*}
Next, we bound the set $K$ by
\begin{equation*}
    \begin{split}
        K &\subset \bigcup_{j=1}^n \{|x_j|> m/\sqrt{n}\}
    \end{split}
\end{equation*}
Consequently, for all $\eps$ small enough,
\begin{equation}
\label{e:LcK}
    L_{\eps,\theta}^c\cap K \subset \bigcup_{j=1}^n \paren{ \bigcup_{|k_j|>\frac{m}{\eps\ell_i\pi \sqrt{n}}} B_{j,k_{j}}\times\prod_{i\neq j} \bigcup_{k_i \in \Z} B_{i,k_{i}}}\,.
\end{equation}
Similarly
\begin{equation*}
    K^c \subset \prod_{j=1}^n \{|x_j|\leq m \}\,,
\end{equation*}
and
\begin{equation}
   \label{e:LcKc}
        L_{\eps,\theta}^c\cap K^c \subset \prod_{i=1}^n \bigcup_{|k_i|<\frac{m}{\eps \ell_i\pi}} B_{i,k_i}\,.
\end{equation}

Now, using \eqref{e:V0lowerbound} and \eqref{e:LcK},
\begin{equation*}
\begin{split}
    \mu(L^c_{\epsilon,\theta}\cap K)&\leq \sum_{j=1}^n {\mu\paren{\bigcup_{|k_j|>\frac{m}{\eps\ell_i\pi \sqrt{n}}} B_{j,k_{j}}\times\prod_{i\neq j} \bigcup_{k_i \in \Z} B_{i,k_{i}}}}\\
    &\lesssim \sum_{j=1}^n\paren{\sum_{|k_j|>\frac{m}{\eps\ell_i\pi \sqrt{n}}}\int_{B_{j,k_{j}}} e^{-C_p |x_j|^p}dx_j }\prod_{i\neq j}\paren{\sum_{k_i}\int_{B_{i,k_{i}}}
    e^{-C_p |x_i|^p}dx_i}\,.
\end{split}
\end{equation*}
The first term in the above sum is as in the example \eqref{e:example2}
\begin{equation*}
\begin{split}
    \sum_{|k_j|>\frac{m}{\eps\ell_j\pi \sqrt{n}}}\int_{B_{j,k_{j}}} e^{-C_p |x_j|^2}dx_j&\lesssim \eps^{1+\theta} \sum_{|k_j|>\frac{m}{\eps\ell_j\pi \sqrt{n}}} e^{- C_p (\eps \ell_i\pi k_j)^p }\\
    &\lesssim\eps^{1+\theta}\int_{\frac{m}{\eps\ell_j\pi \sqrt{n}}}^\infty e^{- C_p (\eps \ell_j\pi k_j)^p } dk_j\lesssim \eps^{\theta}\,.
\end{split}
\end{equation*}
The other terms are treated the same way
\begin{equation*}
\sum_{k_i}\int_{B_{i,k_{i}}} e^{-C_p |x_i|^p}dx_i\lesssim \eps^{1+\theta} \int_0^\infty e^{- C_p (\eps \ell_i\pi k_i)^p } dk_i\lesssim \eps^\theta\,.
\end{equation*}
Consequently
\begin{equation}
\label{e:muLcK}
     \mu(L^c_{\epsilon,\theta}\cap K)\lesssim \eps^{n\theta}\,.
\end{equation}

For $x \in L_{\eps,\theta}^c\cap K$, using \eqref{e:LcKc},
\begin{equation}
\label{e:gLcKc}
\begin{split}
    \int_{L_{\eps,\theta}^c\cap K^c}g(y;x,\sigma)dy &\lesssim \prod_{i=1}^n \sigma^{-1}\mathrm{Leb.}\paren{\bigcup_{|k_i|<\frac{m}{\eps \ell_i\pi\sqrt{n}}} [-2 \frac{\ell_i}{A_i}\eps^{1+\theta}, 2\frac{\ell_i}{A_i}\eps^{1+\theta}] + \eps \ell_i \pi k_i}\\
    &\lesssim \prod_{i=1}^n \paren{\eps^{1+\theta-\alpha}\sum_{|k_i|<\frac{m}{\eps \ell_i\pi\sqrt{n}}}1 }\lesssim \eps^{n(\theta-\alpha)}\,.
    \end{split}
\end{equation}
Combining this with \eqref{e:muLcK} yields the result.

\end{proof}
}

\section{Alternative algorithms for rough landscapes.}\label{s:algorithms}

Having concluded that there are impediments to MALA for sampling on rough landscapes, but desiring a method that may perform better than RWM, we propose two methods here. Both these methods require a smoothed version of the potential which may not be accessible.  Indeed, as the potential of interest is unlikely to admit a simple decomposition like \eqref{e:V1}, we also propose a method for approximating a smoothed landscape.

\subsection{Modified MALA.}

The first method, which we call Modified MALA, involves generating proposals from an auxiliary potential, $U$. The algorithm generates the proposals
\begin{equation}
\label{e:ModMALAprop}
    X^{\mathrm{p}}_{k+1} = X_k  - \tfrac{\sigma^2}{2}\nabla U(X_k) + \sigma \xi_{k+1}\,,
\end{equation}
and $R(x,y)$ is modified to be
\begin{equation}
\label{e:RModMALA}
    R(x,y) = V(x) - V(y) -\tfrac{1}{2\sigma^2}|{x - y +\nabla U(y)\tfrac{\sigma^2}{2}}|^2+ \tfrac{1}{2\sigma^2}|{y - x +\nabla U(x)\tfrac{\sigma^2}{2}}|^2\,.
\end{equation}
If the scale separation in \eqref{e:V1} holds, we might take  $U= V_0$ to get \eqref{e:ModMALAV0prop}.  Since the proposal is $\eps$-independent, Corollary~\ref{cor:Teps} applies to this method.

\subsection{Independence sampler.}
The second method we propose is to generate auxiliary samples, $Y_k$, from another distribution $\propto e^{-U(x)}dx$, and  perform independence sampling against $\mu(dx)\propto e^{-V(x)}dx$.  More explicitly,
\begin{subequations}
\label{e:RejectionSampler}
\begin{gather}
    \Delta(x) = V(x) - U(x)\,,\;\; \quad R(x,y)   = \Delta(x) - \Delta(y)\\
    X_{k+1} =\begin{cases}
    Y_{k+1}, & \text{with probability }F(R(X_k,Y_{k+1})), \\
    X_k, & \text{otherwise.}
    \end{cases}
\end{gather}
\end{subequations}
This approach is agnostic as to how samples from the auxiliary distributions are generated -- any strategy which produces (approximately) independent samples will be satisfactory.   When the assumptions about \eqref{e:V1} hold, then we would take $U=V_0$ giving  \eqref{e:MCMCV0prop}.  Corollary~\ref{cor:Teps} again applies to this method.   Given the choice of $U$ and the mechanism for sampling from its distribution, this algorithm has no free parameters -- there is no $\sigma$ that can be tuned.

In our numerical examples below, where the potentials take the form \eqref{e:Vepssum}, we numerically approximate an inverse function sampler on each degree of freedom, generating i.i.d. samples $\propto e^{-v_0(x_i)}$ for $i=1,\ldots, n$.  This approach, of course, will not be practical for general landscapes, and we return to this issue in the discussion.

\subsection{Finding smoothed landscapes.}\label{s:locent}

A question that remains is what to do when \eqref{e:V1} fails to hold.  One option is to use physical intuition about the problem to identify a potential $U(x)$ that has suitable properties.  More systematically, we can use the local entropy approach formulated in \cite{chaudhari2016entropysgd,chaudhari2018deep}, or, equivalently the Moreau-Yosida approximation to estimate a smoothed version of $V(x)$.

Given $\gamma>0$, we define $V_\gamma$ as
\begin{equation}
\label{e:Vgamma}
    V_\gamma(x)  = -\beta^{-1}\log\set{ \int \paren{\frac{\beta}{2\pi \gamma}}^{\frac{n}{2}} e^{-\frac{\beta|x-y|^2}{2\gamma}} e^{-\beta V(y)}dy}\,,
\end{equation}
which corresponds to a Gaussian filter of the Boltzmann distribution.  The associated gradient is then
\begin{equation}
\label{e:gradVgamma}
\nabla V_\gamma(x) = \gamma^{-1}\int_{\mathbb{R}^n}(x-y)\rho_\infty (y\mid x)dy = \gamma^{-1}(x - \mathbb{E}^{\rho_\infty(\cdot\mid x)}[Y])\,.
\end{equation}
The density, $\rho_\infty(y\mid x)$, can be defined by first
introducing
\begin{equation}
\label{e:Ugamma}
U_\gamma(y\mid x) = V(y) + \tfrac{1}{2\gamma}|x-y|^2\,,
\end{equation}
so that
\begin{equation*}
    \rho_\infty(y\mid x) \propto e^{- \beta U_\gamma(y\mid x)}\,.
\end{equation*}
The potential $V_\gamma$  could be estimated with the simple Monte Carlo scheme
\begin{equation}\label{e:vgamma_mc}
    V_\gamma(x) \approx -\beta^{-1}\log\paren{ \frac{1}{\NS_{\mathrm{s}}}\sum_{j=1}^{\NS_{\mathrm{s}}} e^{-\beta V(x + Y^{(j)})}}\,,\;\;\quad Y^{(j)} \sim N(0, \gamma\beta^{-1} I)\,.
\end{equation}
Likewise, \eqref{e:gradVgamma} could be estimated by integrating the auxiliary diffusion
\begin{equation}\label{e:local_sde}
dY_t = - \nabla U_\gamma(Y_t\mid x)dt+
         \sqrt{2\beta^{-1}}dW_t^y\,.
\end{equation}
If $V$ is assumed to grow at infinity, then for sufficiently small $\gamma$, $U_\gamma(y\mid x)$ will be convex about $y=x$, and  \eqref{e:local_sde} will converge to equilibrium exponentially fast. The gradient
$\nabla V_\gamma$  can then be estimated by Monte Carlo
\begin{equation}\label{e:gradvgamma_mc}
    \nabla V_\gamma(x) \approx \frac{1}{\NS_{\mathrm{s}}}\sum_{j=1}^{\NS_{\mathrm{s}}} \gamma^{-1}(x - Y^{(j)})\,,
\end{equation}
where each $Y^{(j)}\sim \rho_\infty(\cdot\mid x)$.  Of course, $Y^{(j)}$ must now be appropriately sampled.  Running short i.i.d. samples of \eqref{e:local_sde} using, for instance, MALA, the algorithm introduces three additional numerical parameters: $\NS_{\mathrm{s}}$, the number of samples; $\delta t$, the fine scale time step; and $\NS_{\delta t}$, the number of fine scale  time steps.  We must also specify initial conditions.

With estimates of $V_\gamma$ and $\nabla V_\gamma$, we then use them as the smoothed landscapes in our coupled independence sampler scheme or  Modified MALA scheme.

\section{Numerical experiments.}
\label{s:numerics}

In this section we demonstrate, numerically, how roughness can impede MALA, and how RWM, along with the methods discussed in Section~\ref{s:algorithms}, can resist the roughness.  In these computational examples, we use the Metropolis rule, \eqref{e:metrop}.
By Remark~\ref{rem:variance}, it is sufficient to study $\MSD$ without including the variance factor in \eqref{e:MSDrobustness} or \eqref{e:MSDrobustness2}.

\subsection{Rough harmonic potential.}
\label{s:harmonic}

As a first example, we consider a potential of the type \eqref{e:Vepssum}, with
\begin{equation}\label{e:roughharm}
    v_\eps(x_i) = \tfrac{1}{2}x_i^2 + \tfrac{1}{8}\cos(x_i/\eps)\,.
\end{equation}
The potential is depicted in Figure~\ref{fig:roughharm} at various values of $\eps$.

\begin{figure}
    \centering
    \includegraphics[width=6cm]{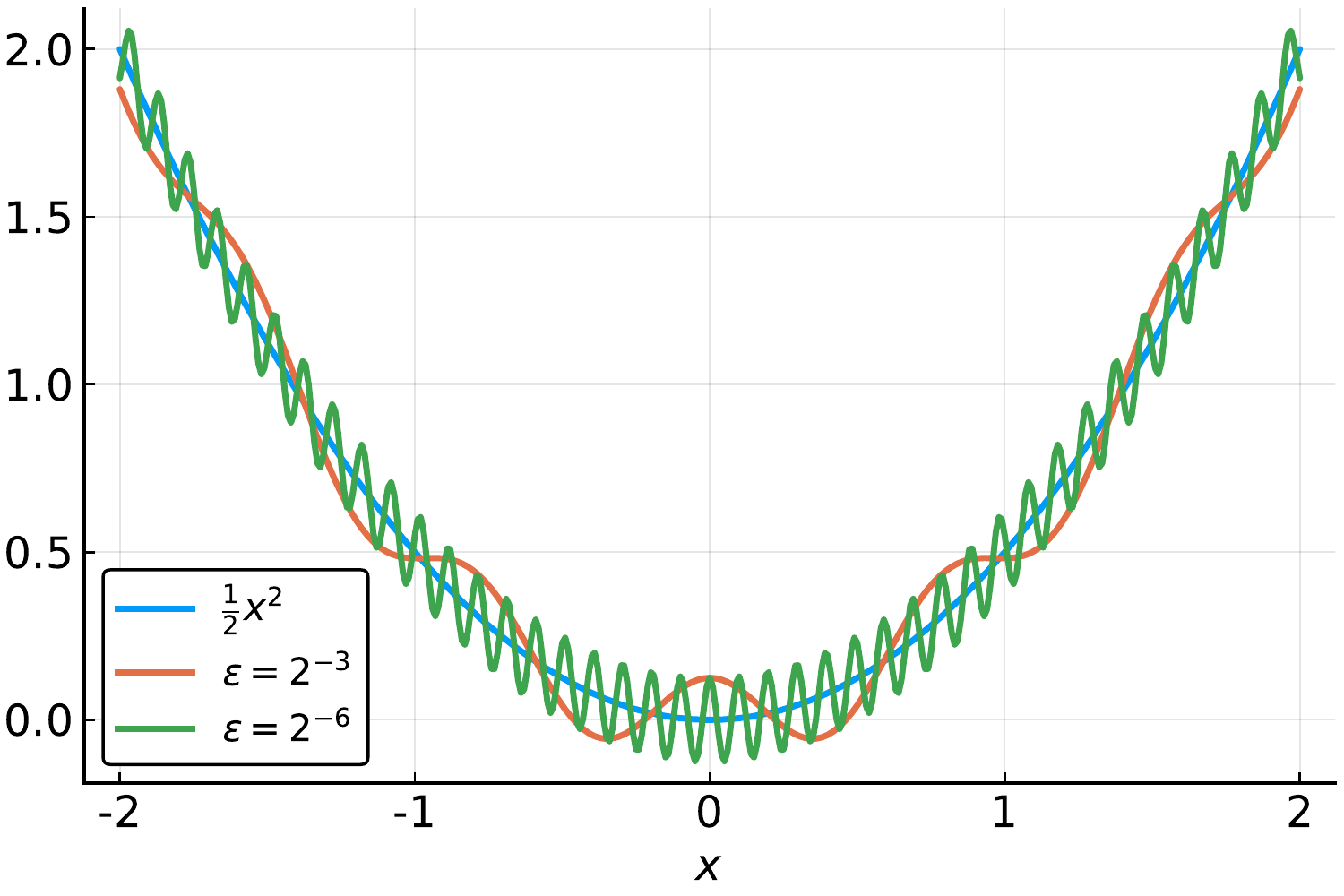}
    \caption{Harmonic potential \eqref{e:roughharm} with an additive rough term (color online).}
    \label{fig:roughharm}
\end{figure}

We explore this potential  by varying both $\eps$ and the dimension $n$.  {A priori}, we do not know the optimal value of $\sigma$ for each $\eps$ and $n$ pair, for each algorithm.  Thus, we examine a range of $\sigma$ values, running long trajectories for each, and then empirically estimate the one with the maximum $\MSD$.  See Figure~\ref{fig:roughharmMSD} for examples of such a computations.  In this way we are able to compare performance across methods.  In these experiments inverse temperature is set to  $\beta=5$ and $10^8$ iterations are performed in each run.  The starting point for these runs is $x_0 = (0,0,\ldots,0)^{T}\in \R^n$.   For the independence sampler, we combine numerical quadrature and interpolation to approximate the inverse cumulative distribution function of the distribution $e^{-\beta v_0(x_i)}$.  This allows us to perform inverse function sampling.  Consequently, there are no free parameters to tune for this strategy.

\begin{figure}
    \subfigure[]{\includegraphics[width=6.25cm]{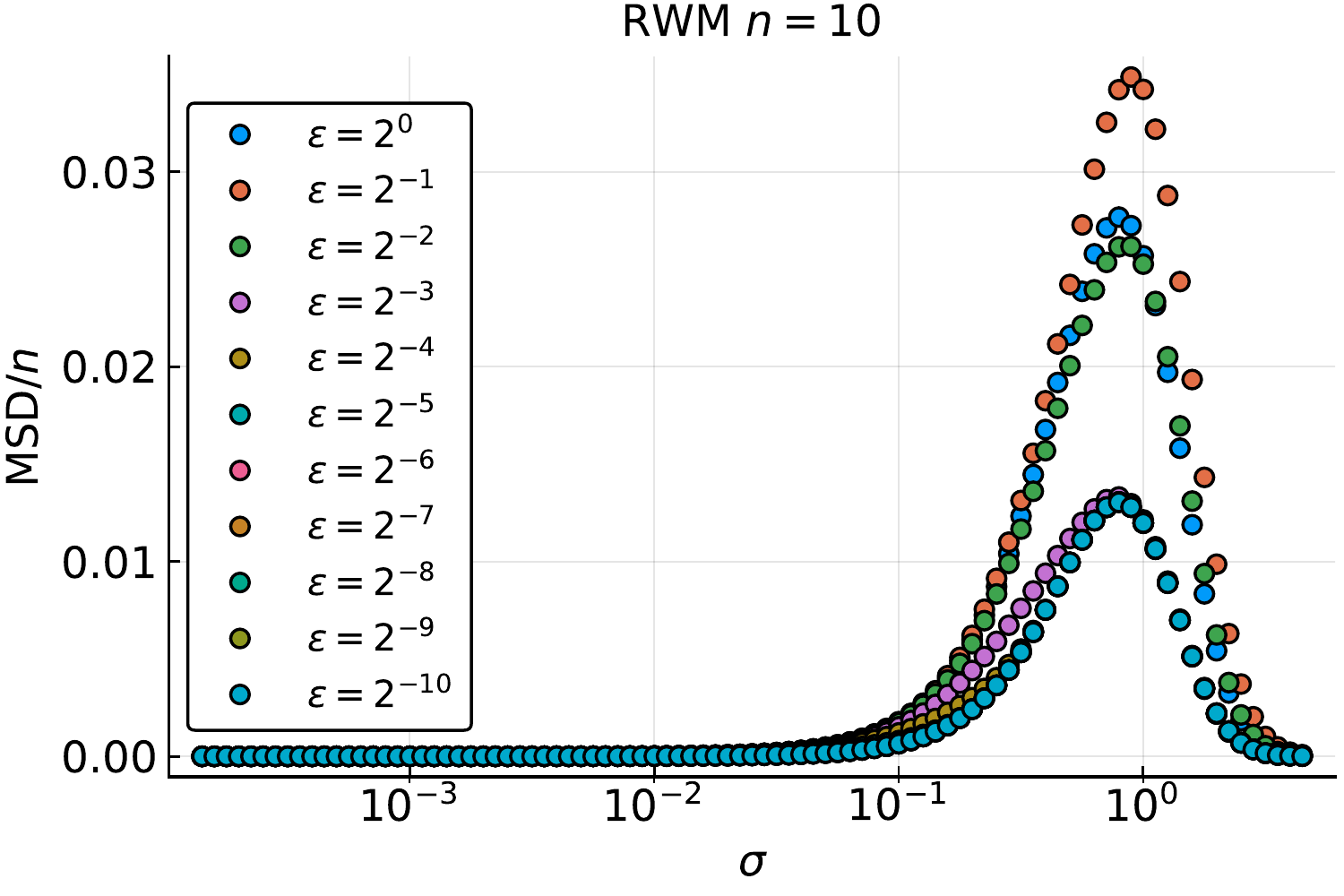}}
    \subfigure[]{\includegraphics[width=6.25cm]{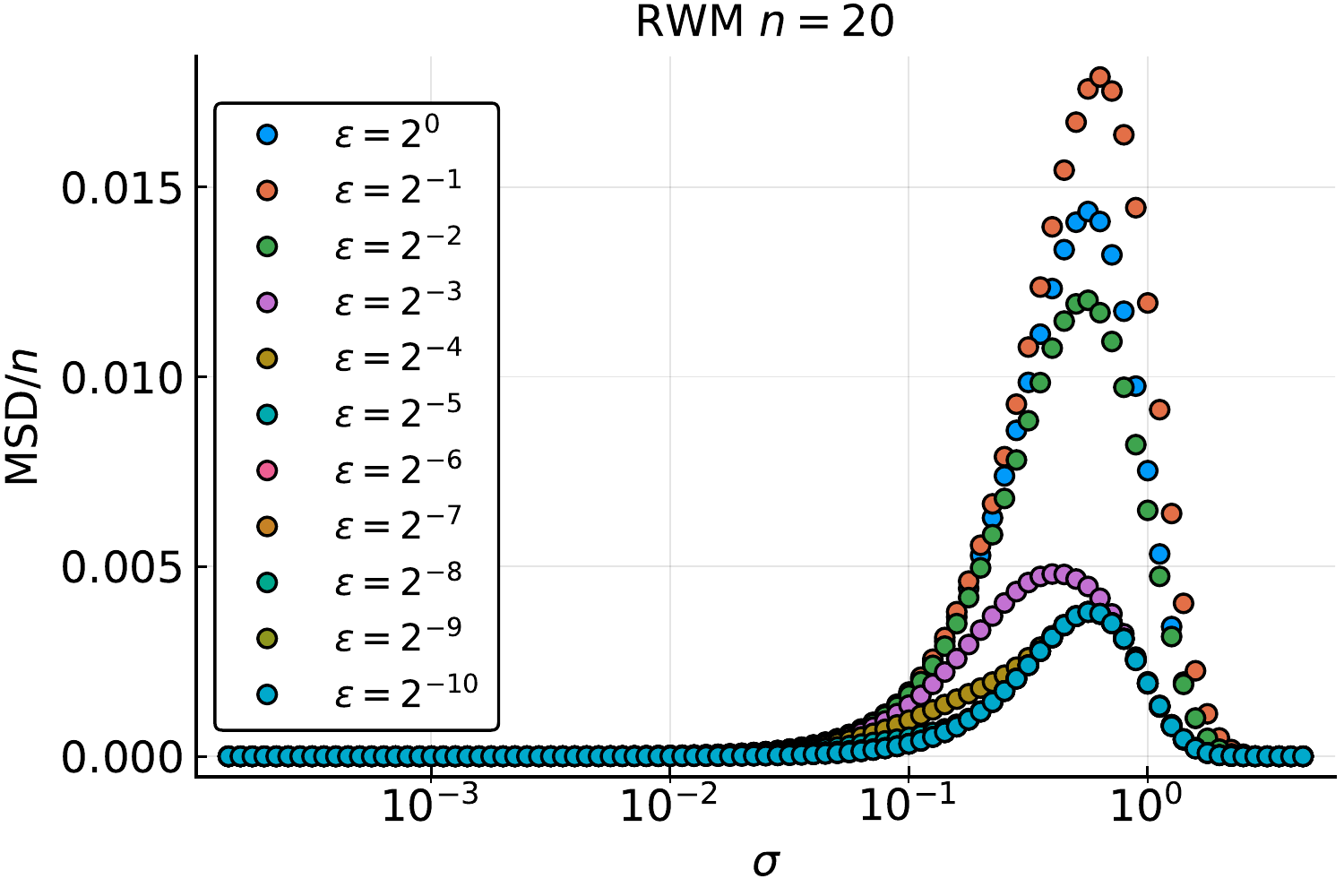}}

    \subfigure[]{\includegraphics[width=6.25cm]{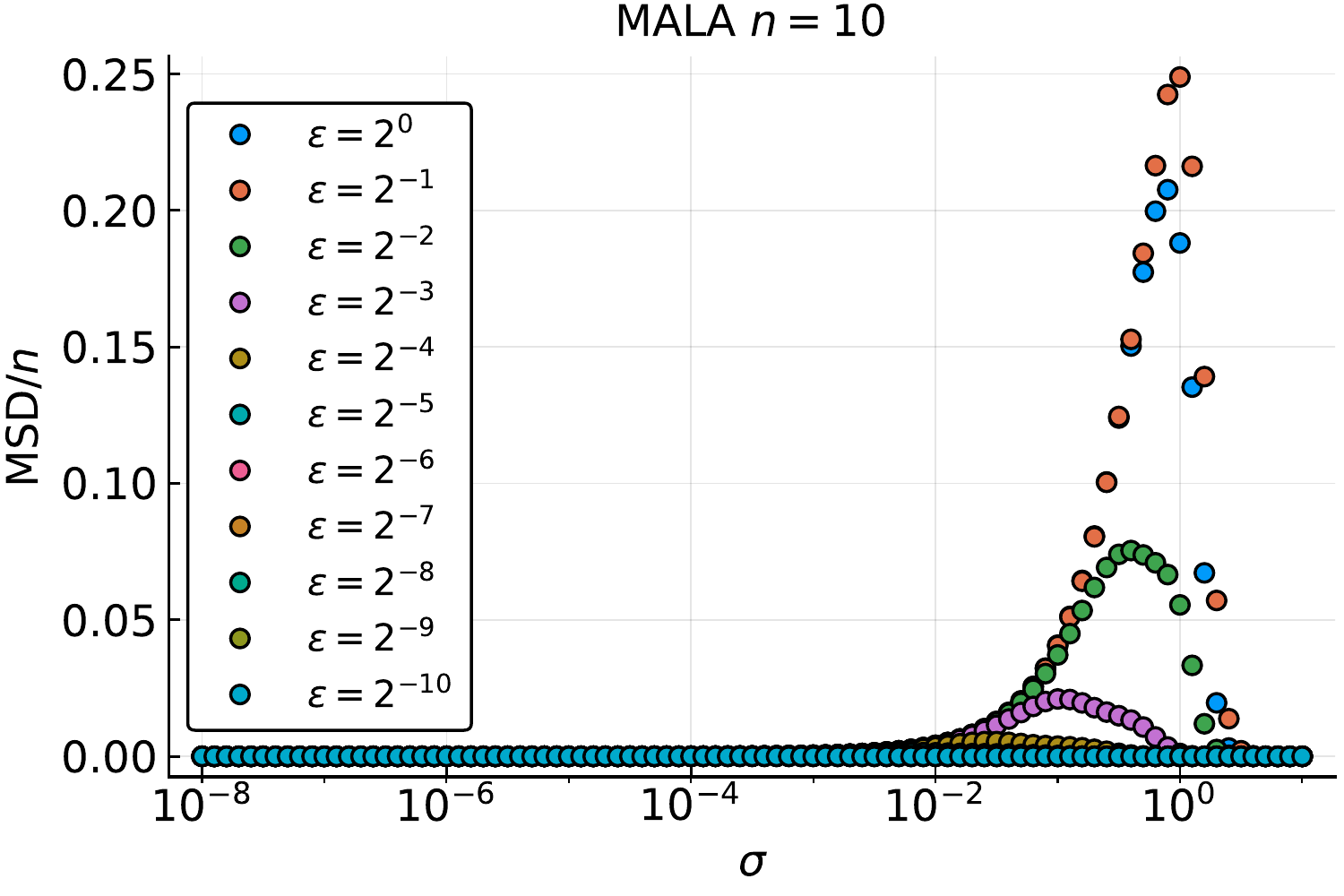}}
    \subfigure[]{\includegraphics[width=6.25cm]{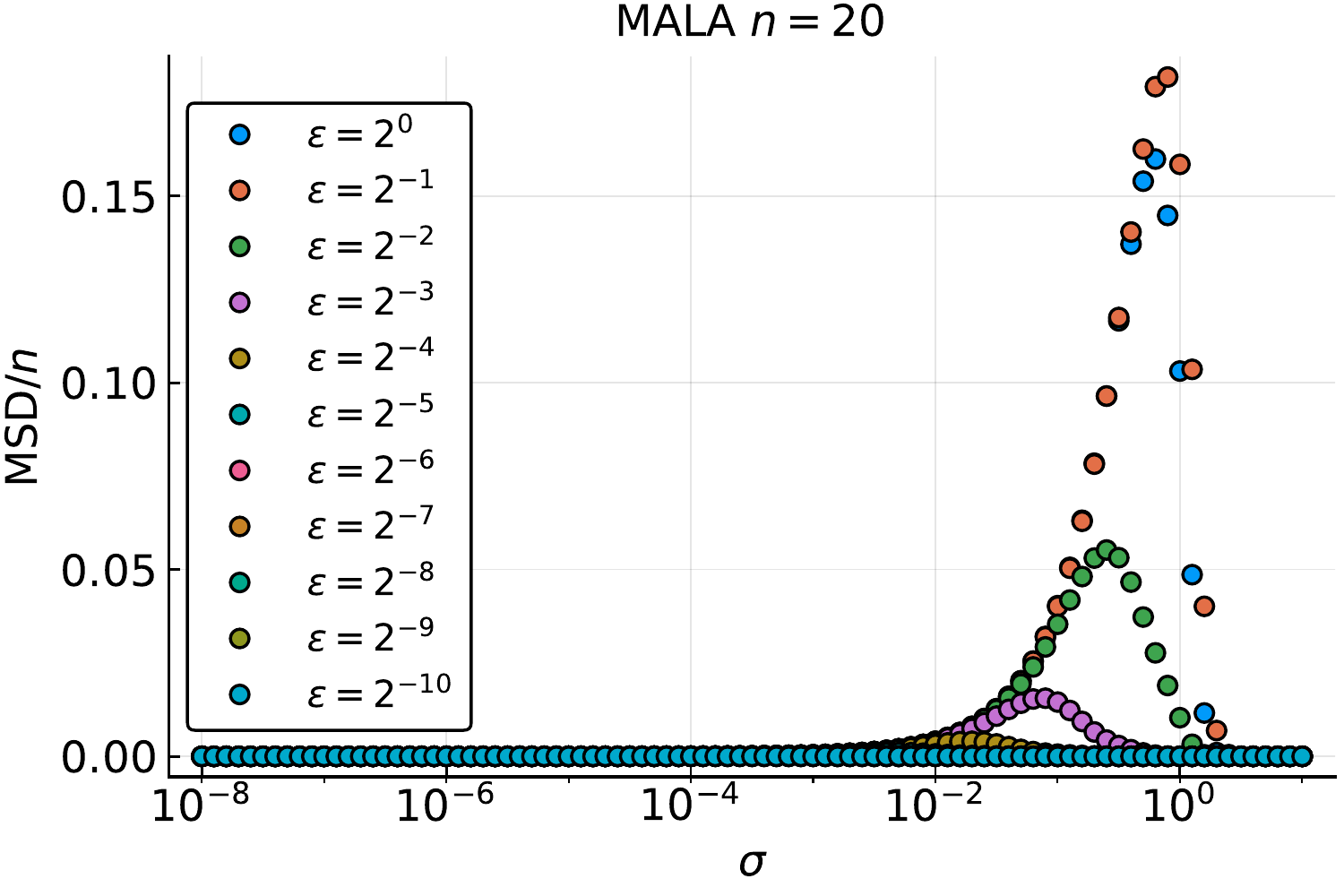}}

    \subfigure[]{\includegraphics[width=6.25cm]{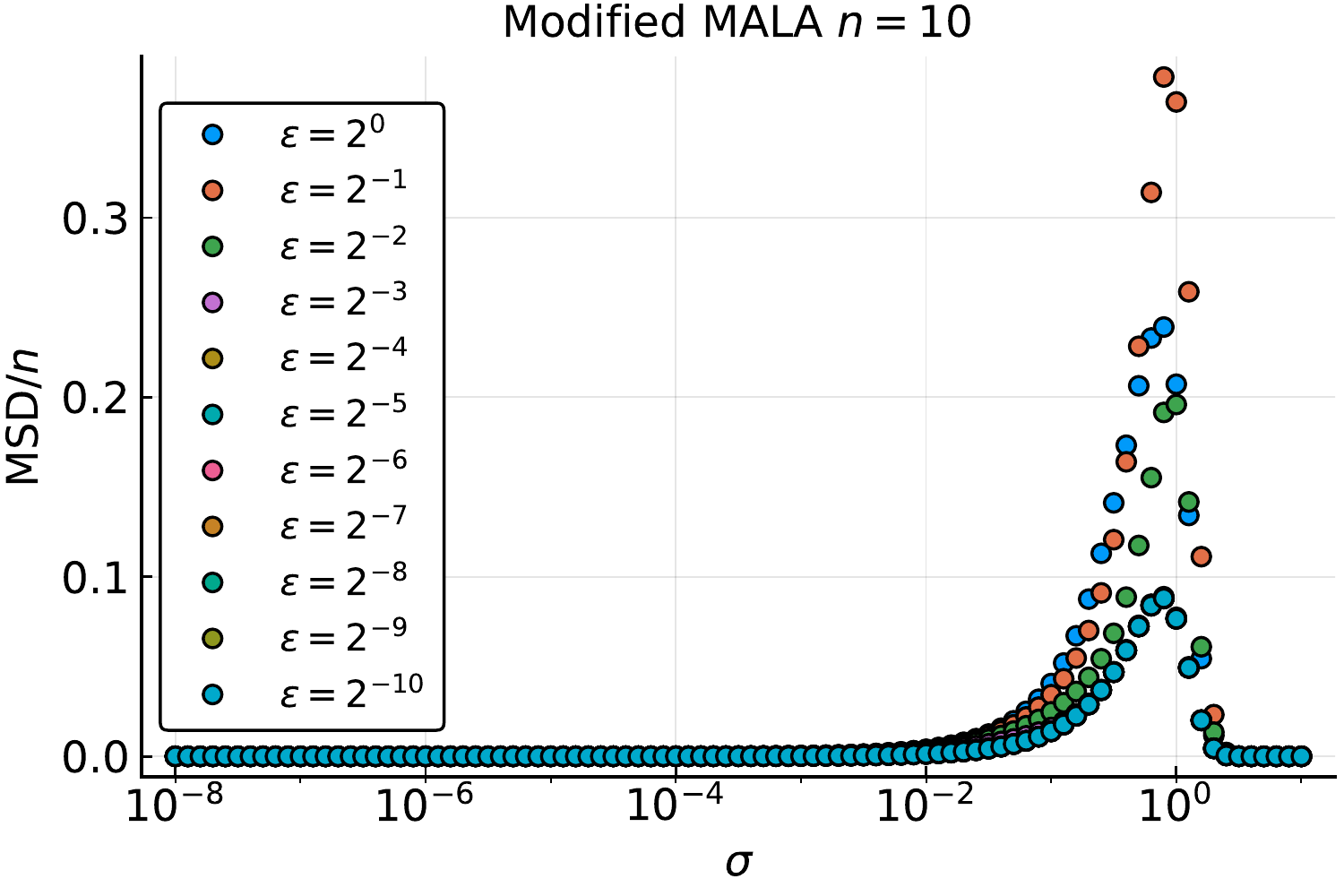}}
    \subfigure[]{\includegraphics[width=6.25cm]{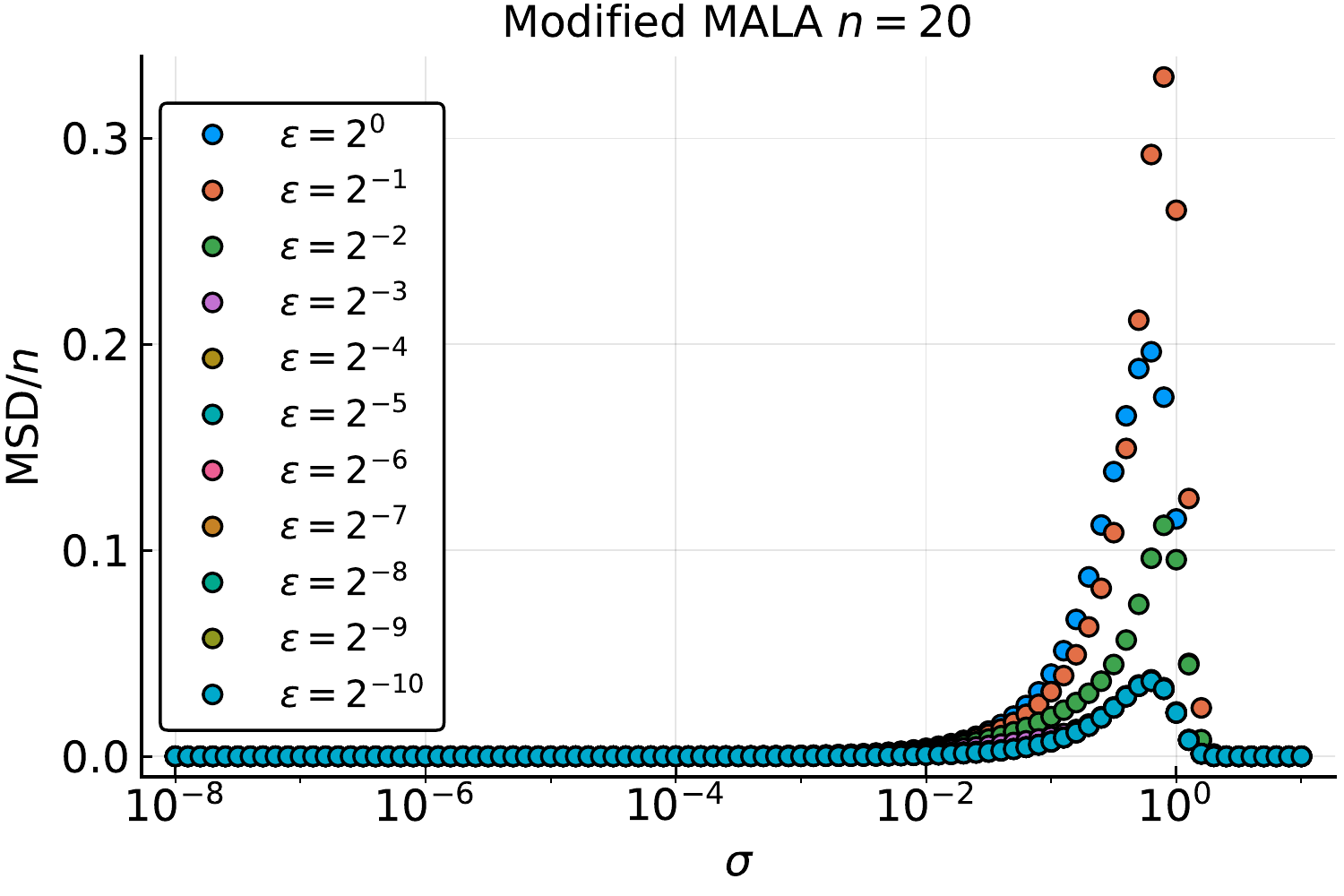}}

    \caption{Example computations of the MSD for \eqref{e:roughharm} for a variety of $\sigma$ and $\eps$ values.  The empirical maximum, for each $(\sigma,\eps)$ pair is interpreted to be the optimal choice.  Note that the peak of $\MSD$ for MALA falls off rapidly with $\eps$ (color online).}
    \label{fig:roughharmMSD}
\end{figure}

{Having estimated the optimal $\sigma$, we can then compare performance between the algorithms, as a function of $\eps$ and $n$.  This is shown in Figure~\ref{fig:roughharmopt}.  This indicates that  $\sigma\propto \eps$ for MALA, provided $n$ is large enough.  In this scaling, Theorem \ref{t:malascaling2} does not apply.  However, the numerical experiments reveal that mixing, as measured by MSD, is impeded because of the size of $\sigma$.  We thus conclude from the numerical experiment that MALA fails to satisfy \eqref{e:MSDrobustness2}, implying that it will also fail to be globally robust, even with optimal $\sigma$.  Also note that the mean acceptance rates across a range of $\eps$ and $n$ deviate from the idealized $n\to \infty$ RWM and MALA values.}

Since RWM, Modified MALA, and the Independence sampler are all globally robust to $\eps\to 0$ in the sense of condition \eqref{e:Globalrobustness}, we examined amplification in performance at different $\eps$ and $n$, as measured by
\begin{equation}\label{e:amp}
    \text{Performance Amplification} = \frac{\text{Optimal MSD for Alternative Method}}{\text{Optimal MSD for RWM}}
\end{equation}
This is shown in Table~\ref{tab:harmamp}.  The alternative methods always beat RWM, and, there is a greater improvement in higher dimension, though the performance improvement saturates as $\eps \to 0$.  The independence sampler method typically outperforms the Modified MALA method.

\begin{figure}

    \subfigure[]{\includegraphics[width=6.25cm]{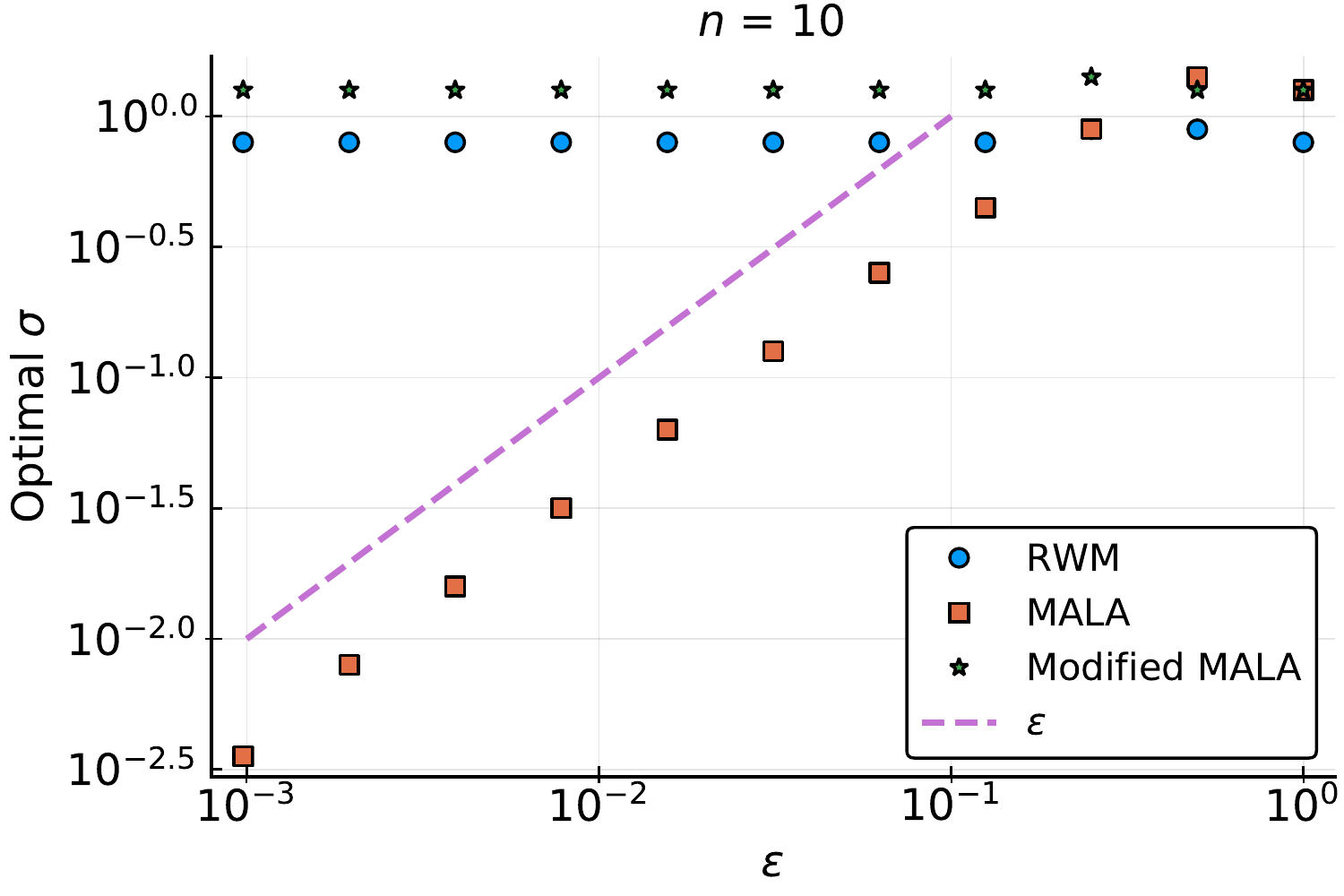}}
    \subfigure[]{\includegraphics[width=6.25cm]{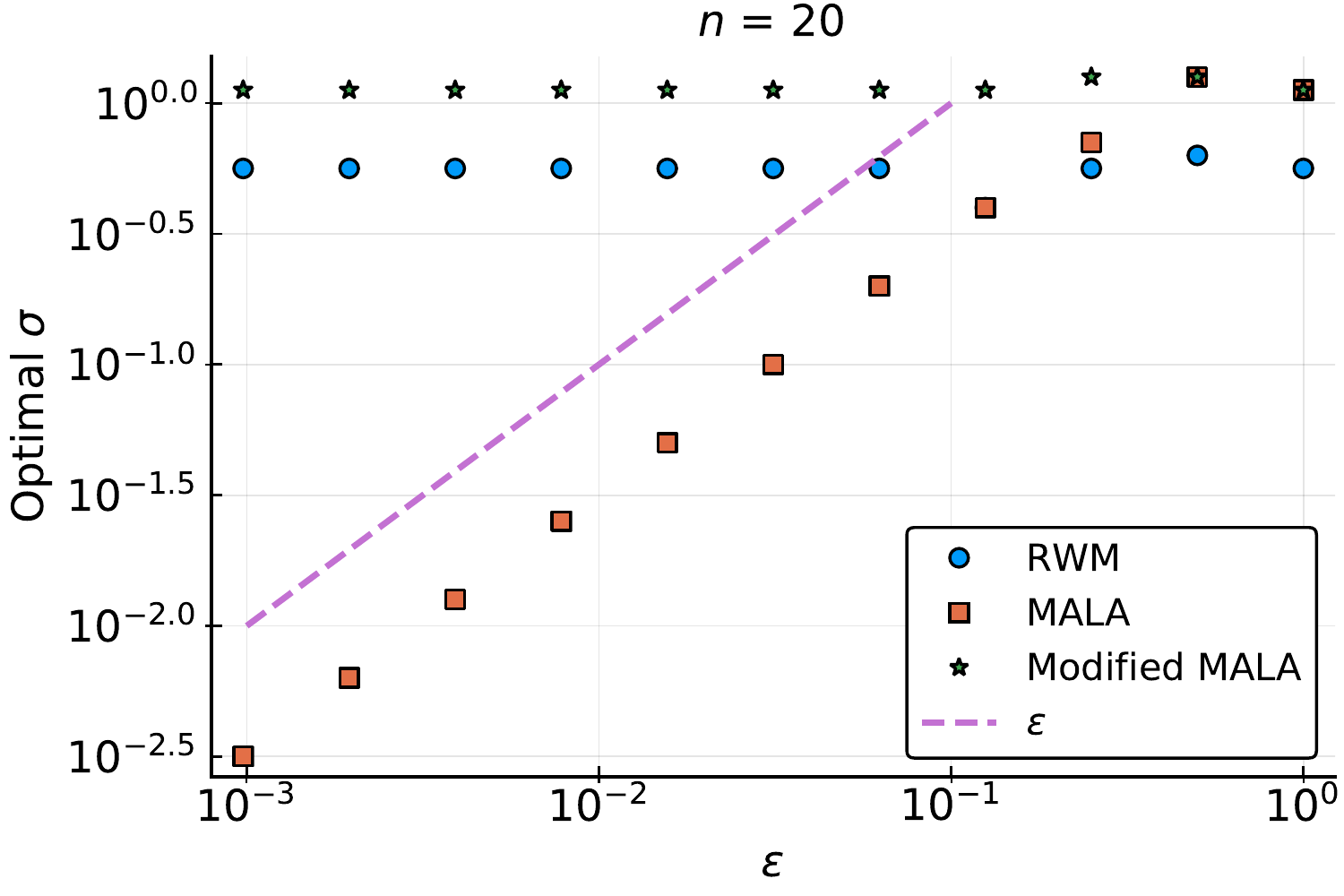}}

    \subfigure[]{\includegraphics[width=6.25cm]{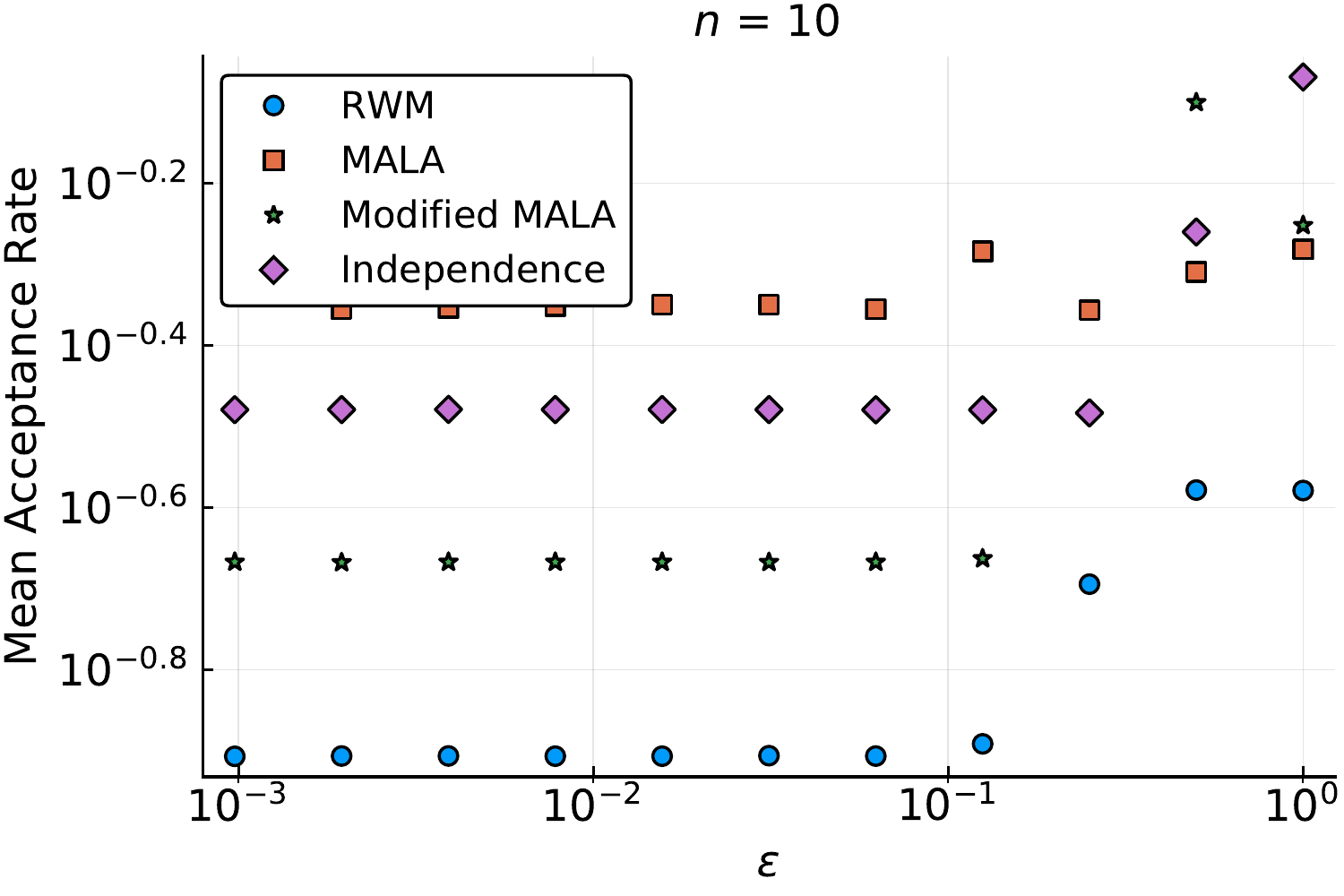}}
    \subfigure[]{\includegraphics[width=6.25cm]{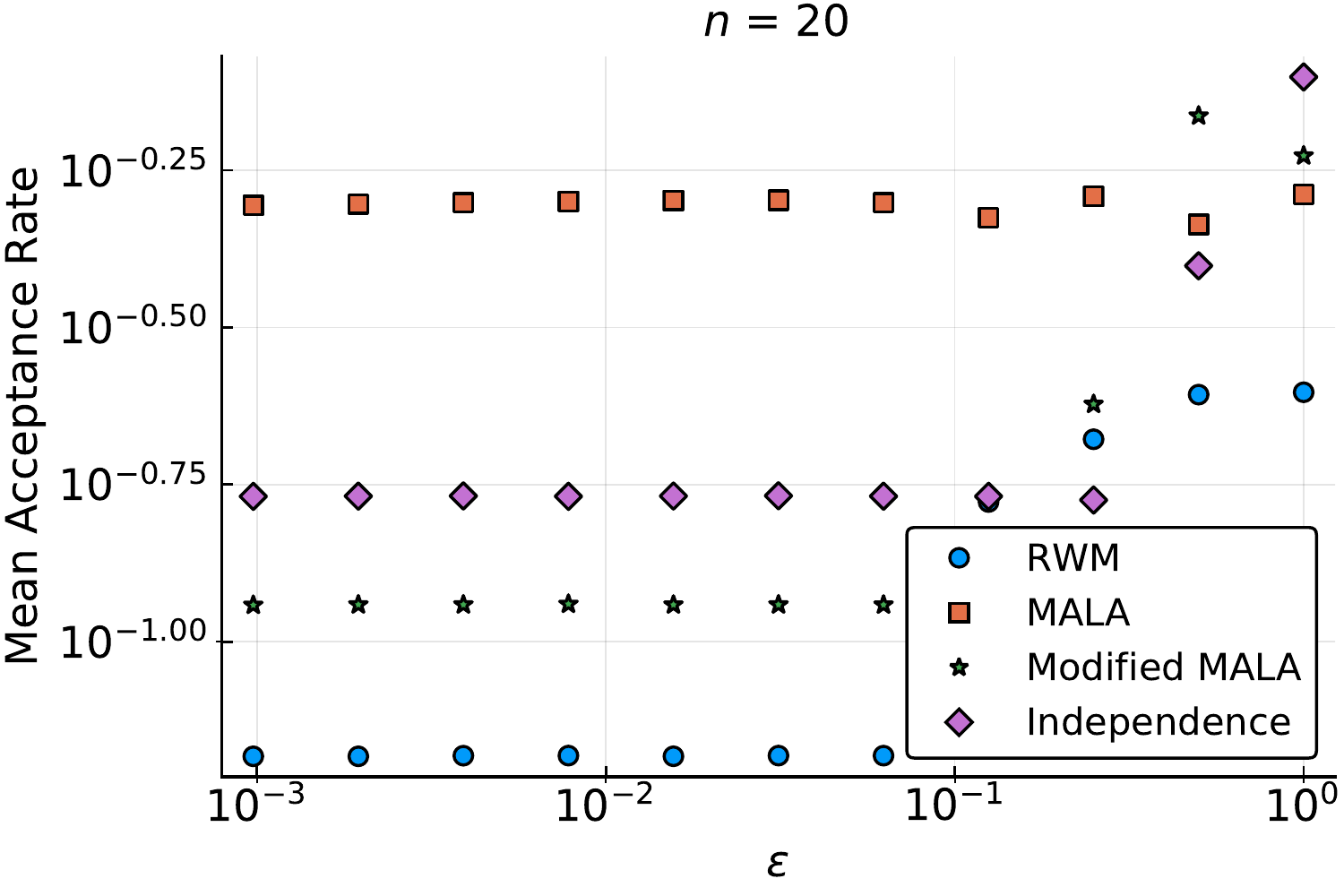}}

    \subfigure[]{\includegraphics[width=6.25cm]{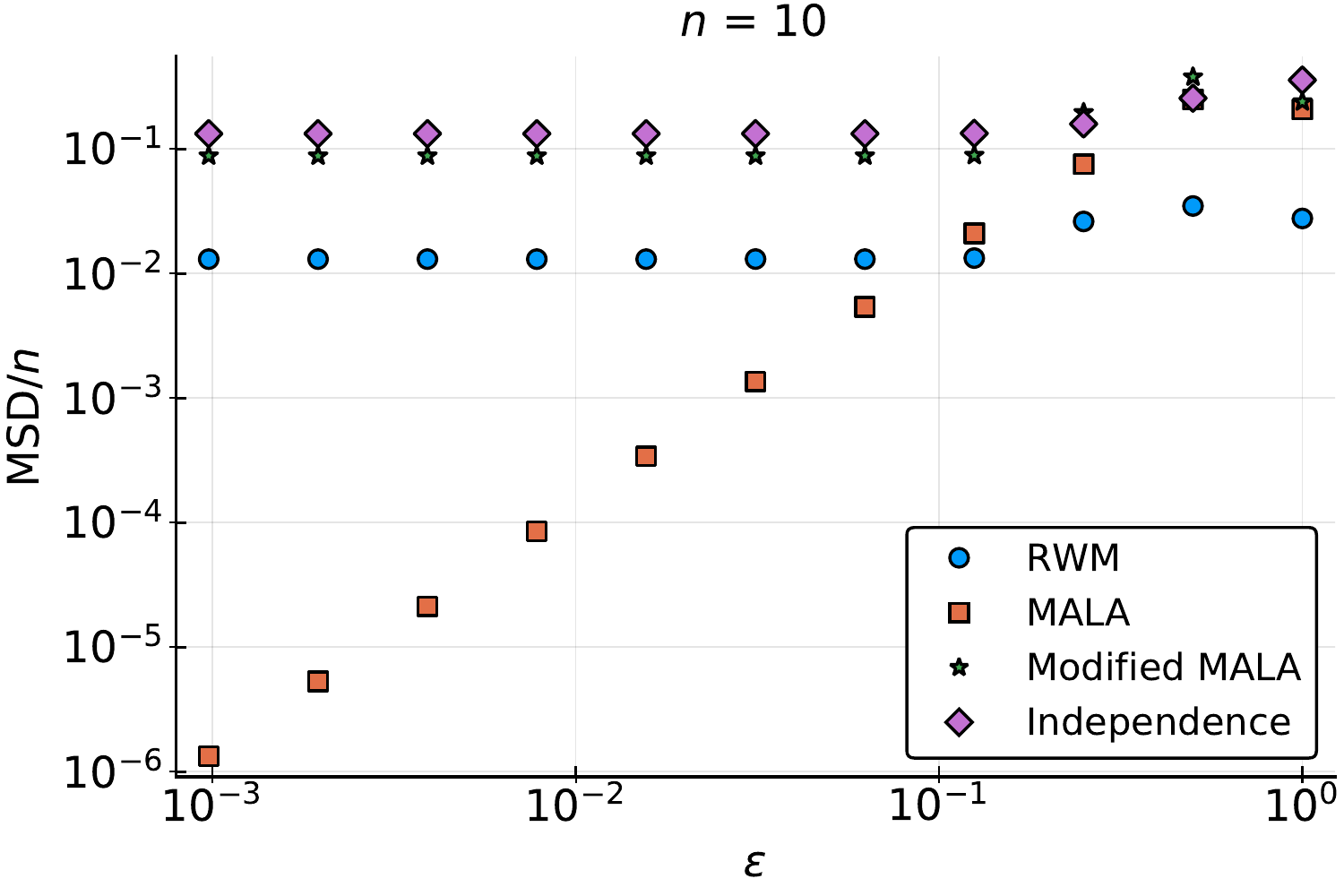}}
    \subfigure[]{\includegraphics[width=6.25cm]{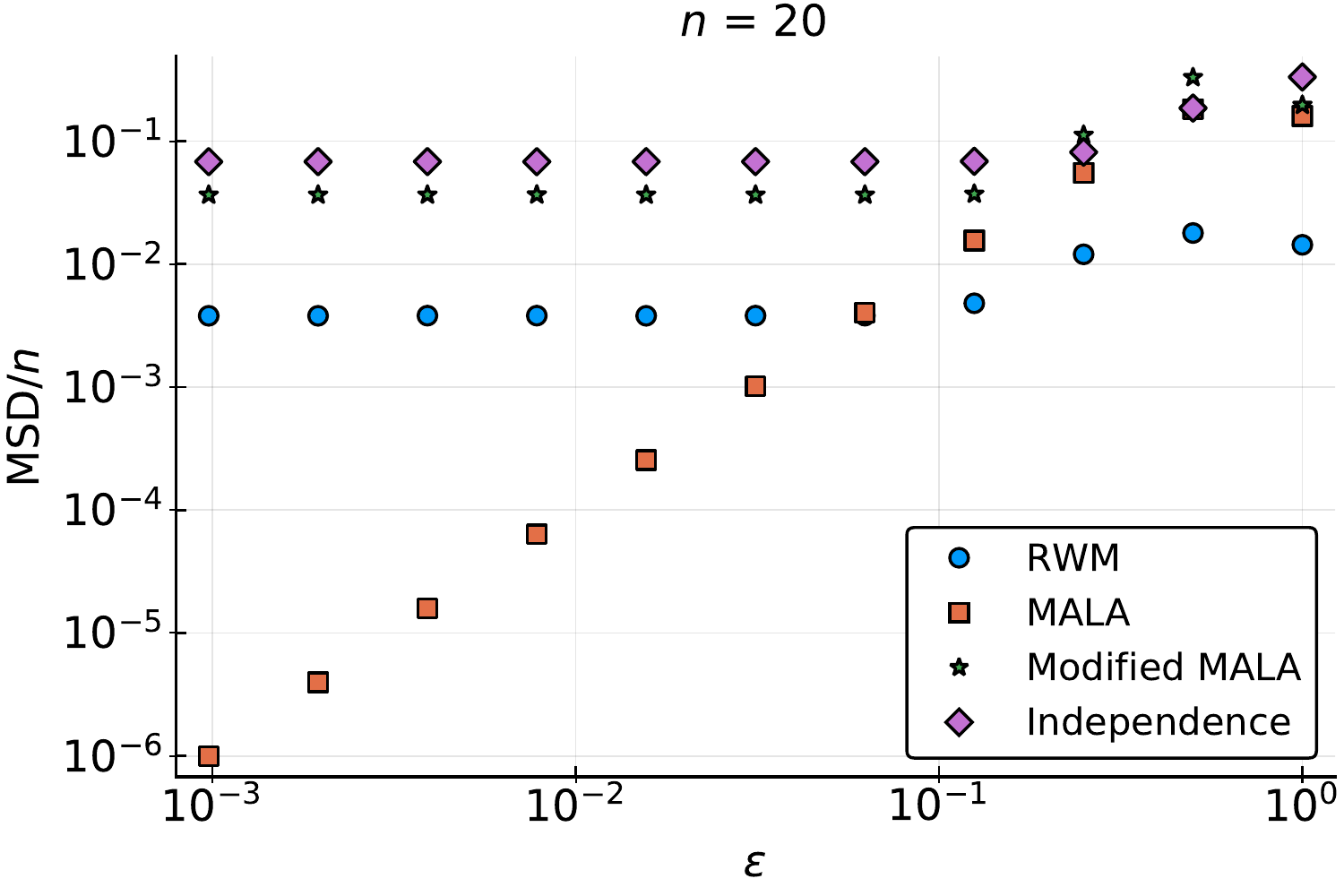}}

    \caption{Comparison of performance for \eqref{e:roughharm} in different dimensions for a range of $\eps$.  Observe that the optimal $\sigma\propto \eps$ , as predicted.  Also note that while MALA maintains a comparatively higher acceptance rate, because its step size is shrinking, the mixing rate, as measured by the MSD vanishes with $\eps$ (color online).}
    \label{fig:roughharmopt}
\end{figure}

\begin{table}
    \centering
    \caption{Ratios of the MSD at optimal $\sigma$ for Modified MALA and  Independence sampler to the RWM for the rough harmonic landscape, \eqref{e:roughharm}.  As $\eps\to 0$, the amplification in performance saturates, but increases with dimension.  The Independence sampler method appears to give better performance in moderate to high dimensions.}
    \label{tab:harmamp}
\centering
    \subtable[Modified MALA Sampler]{\begin{tabular}{l rrrrr}
\hline\hline
$\epsilon$ & $n= 1$ & $n= 10$ & $n= 20$ & $n= 50$ & $n= 100$ \\
\hline
$2^{-5}$ & 2.43 & 6.73 & 9.59 & 15.71 & 4.38 \\
$2^{-6}$ & 2.43 & 6.74 & 9.61 & 15.88 & 15.16 \\
$2^{-7}$ & 2.43 & 6.74 & 9.62 & 15.57 & 20.93 \\
$2^{-8}$ & 2.43 & 6.73 & 9.60 & 15.40 & 25.07 \\
$2^{-9}$ & 2.43 & 6.73 & 9.62 & 15.72 & 20.36 \\
$2^{-10}$ & 2.43 & 6.74 & 9.61 & 15.48 & 22.51 \\
\hline
\hline    \end{tabular}}

    \subtable[Independence Sampler]{\begin{tabular}{l rrrrr}
\hline\hline
$\epsilon$ & $n= 1$ & $n= 10$ & $n= 20$ & $n= 50$ & $n= 100$ \\
\hline
$2^{-5}$ & 2.20 & 10.15 & 17.91 & 40.36 & 12.59 \\
$2^{-6}$ & 2.20 & 10.17 & 17.94 & 39.56 & 57.39 \\
$2^{-7}$ & 2.20 & 10.17 & 17.88 & 40.07 & 70.85 \\
$2^{-8}$ & 2.20 & 10.17 & 17.91 & 39.26 & 67.93 \\
$2^{-9}$ & 2.20 & 10.17 & 17.94 & 39.98 & 67.61 \\
$2^{-10}$ & 2.20 & 10.17 & 17.92 & 39.12 & 73.55 \\
\hline
\end{tabular}}

\end{table}

\subsection{Rough double well potential.}\label{s:double}

We repeat the experiment from Section~\ref{s:harmonic} for a more challenging problem of sampling from the distribution given by the potential
\begin{equation}
    \label{e:roughdouble}
    v_\eps(x_i) = (x_i^2-1)^2 + \tfrac{1}{8}\cos(x_i/\eps)\,,
\end{equation}
with parameters otherwise the same as in the harmonic case. The starting point for these runs is $x_0 = (-1,-1,\ldots,-1)^{T}\in \R^n$. Results here are shown in
Figure~\ref{fig:roughdoubleopt} and Table~\ref{tab:dbleamp}.  These are similar to the experiments for the harmonic potential, but there are some serious differences and some cases where there is little or no performance gain.

First, in comparison to the harmonic problem, the performance amplification of the independence sampler is significantly larger in the double well problem in dimensions $n=10$ and $n=20$.  We attribute this to the ability of the independence sampler to switch between the left and right super basins, a behavior that is entirely absent from the harmonic problem and one which RWM and the Modified MALA method are incapable of reproducing.

The second difference is that both the Modified MALA sampler and the independence sampler have significant performance issues in dimensions $n=50$ and $n=100$  values of $\eps$ including $2^{-7}$ and $2^{-9}$.   Examining the actual trajectories,  in the case of the Modified MALA,
the optimal $\sigma$ is so small that the contribution of $\tfrac{\sigma^2}{2} \nabla V_0$ is negligible.  Consequently, it is trending towards RWM resulting in no performance amplification.  This is why the ratio is unity.  For the independence sampler, the trajectory is stagnant.  This is partially a function of the choice of initial conditions for the trajectories.  If, we instead start the independence sampler at $x_0 = (0,0,\ldots, 0)^T \in \R^n$, instead of Table~\ref{tab:dbleamp} (b), we obtain Table~\ref{tab:dbleamp2}.  This variation requires a more detailed discussion which we provide in  Section~\ref{s:disc}.  Despite these problems in the higher dimension, there is clearly improvement upon RWM in more modest dimension over a broad range of $\eps$.

\begin{figure}

    \subfigure[]{\includegraphics[width=6.25cm]{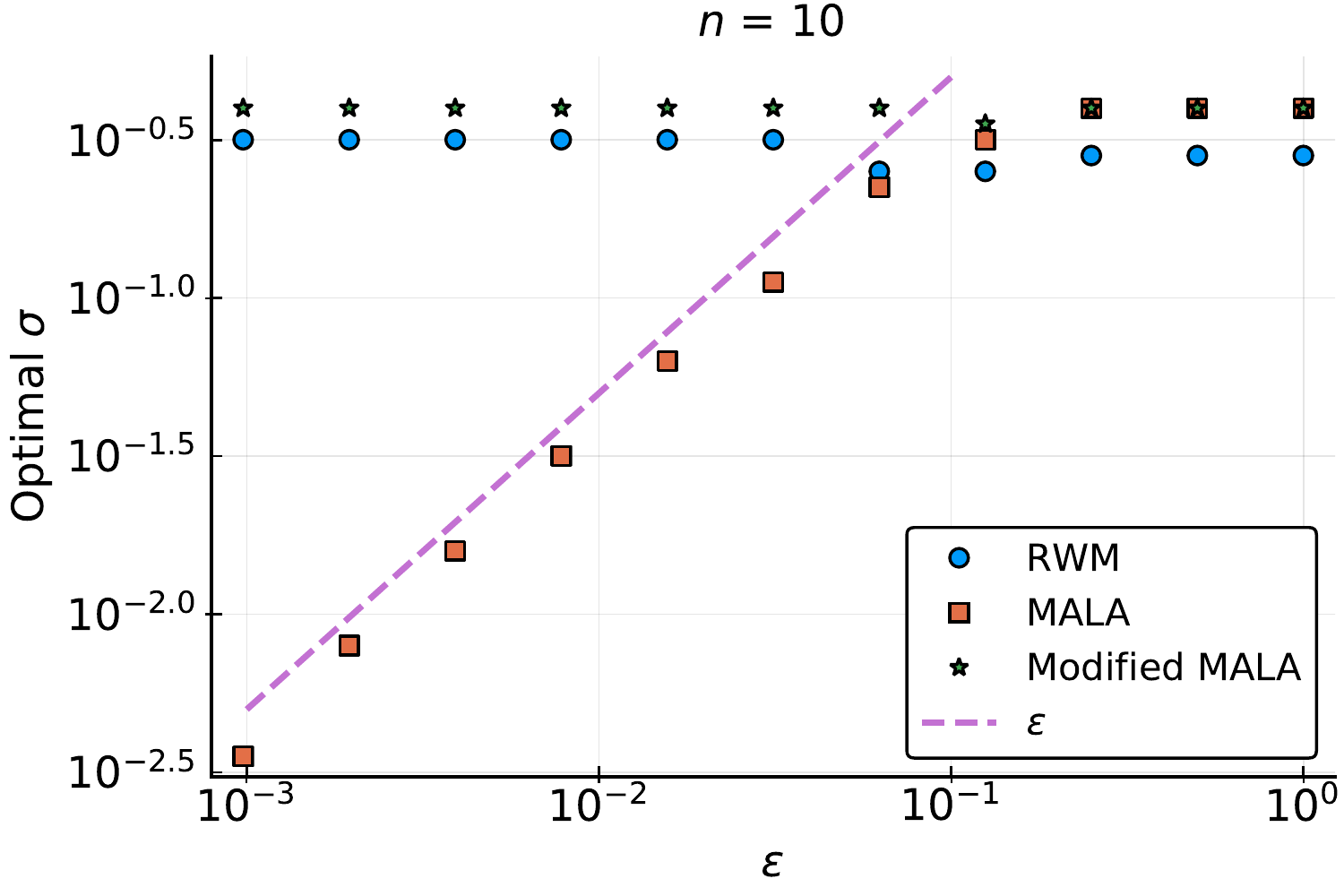}}
    \subfigure[]{\includegraphics[width=6.25cm]{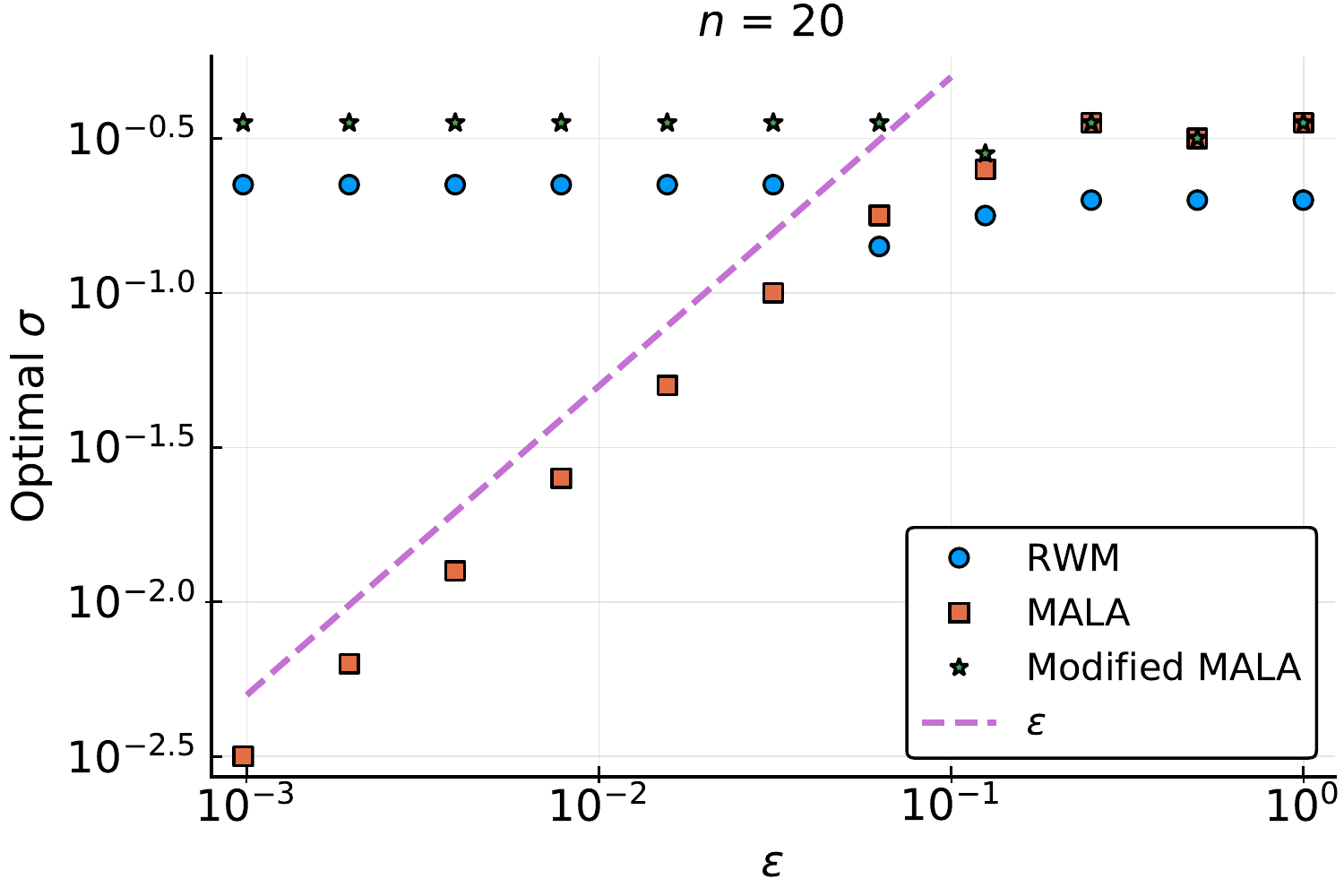}}

    \subfigure[]{\includegraphics[width=6.25cm]{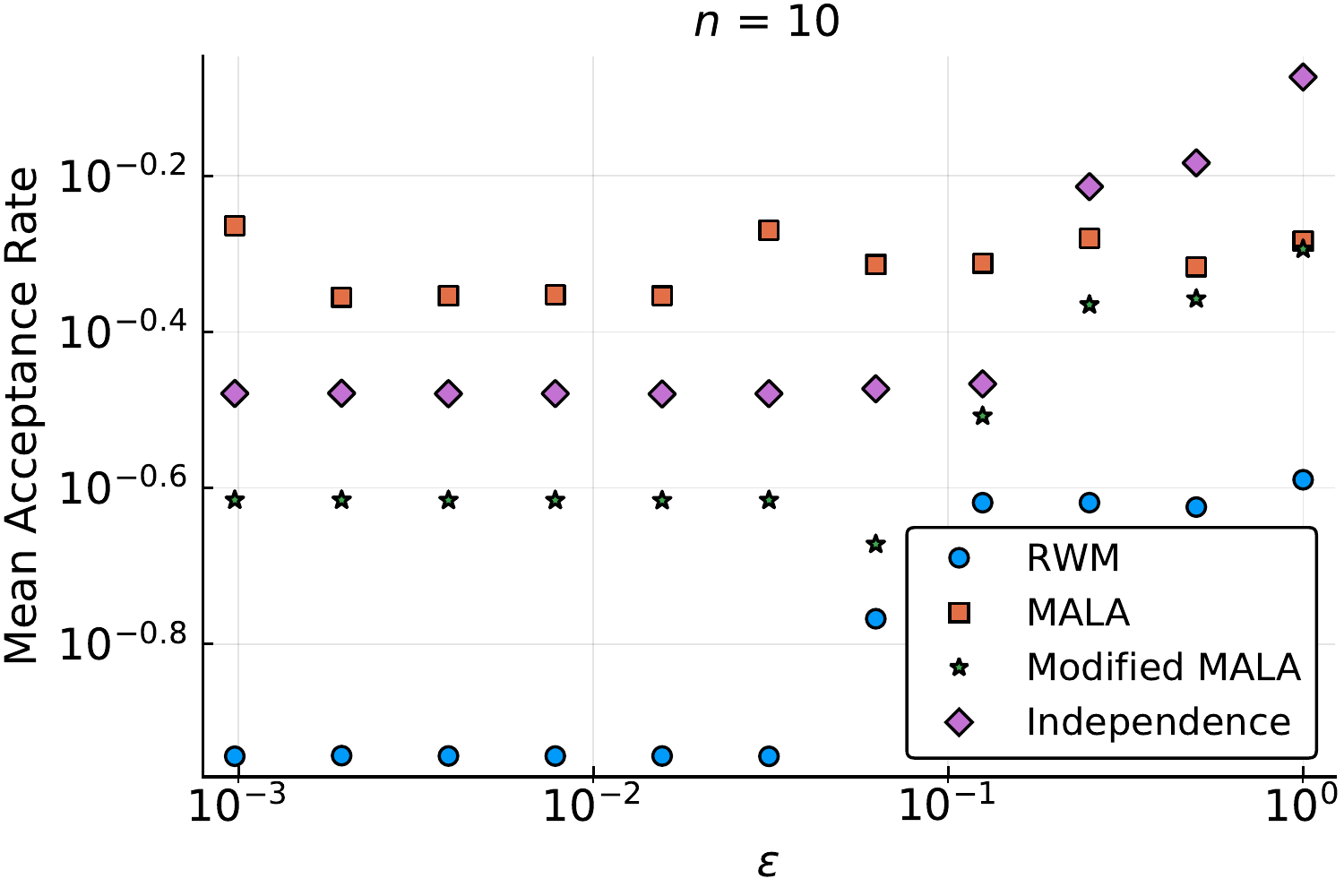}}
    \subfigure[]{\includegraphics[width=6.25cm]{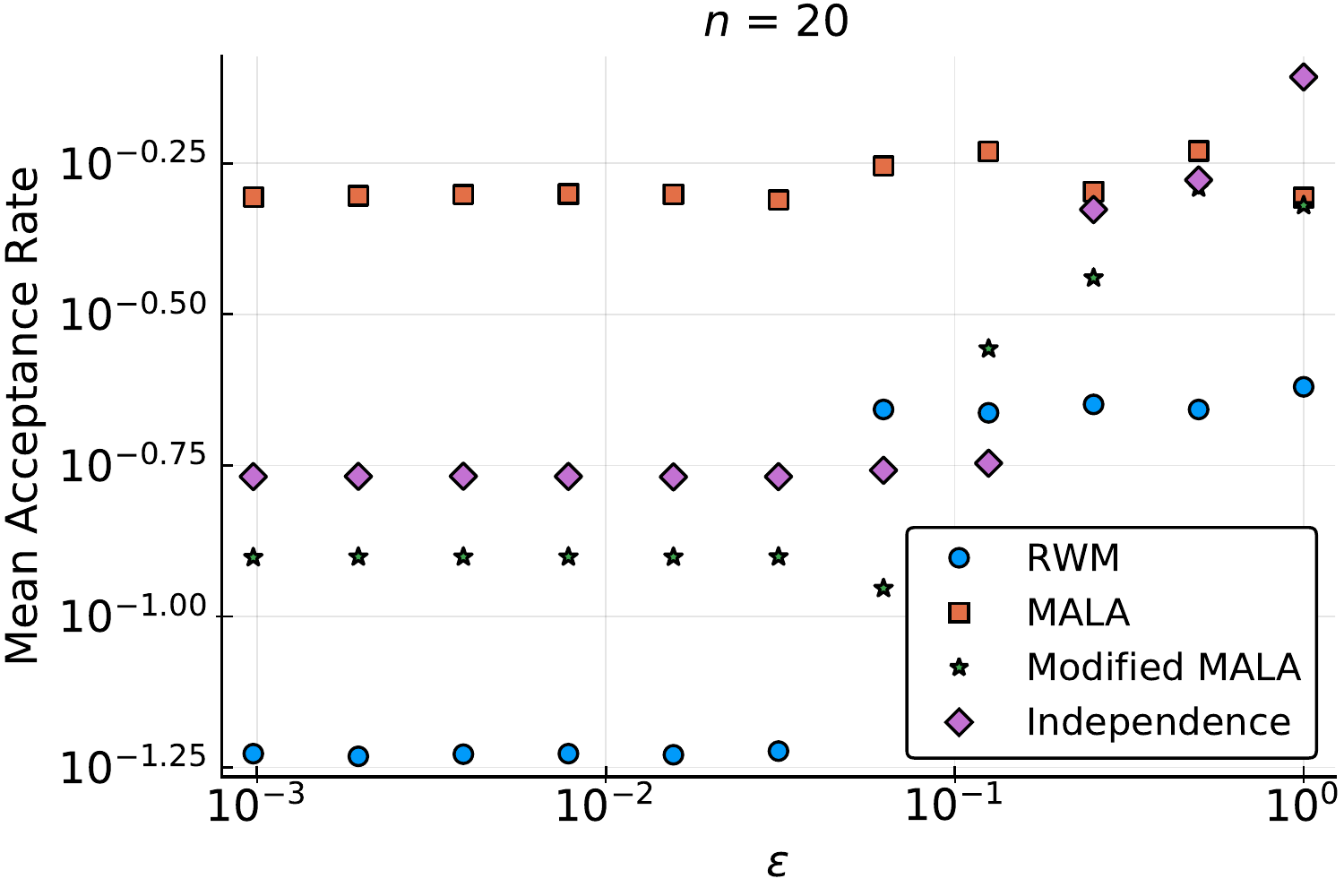}}

    \subfigure[]{\includegraphics[width=6.25cm]{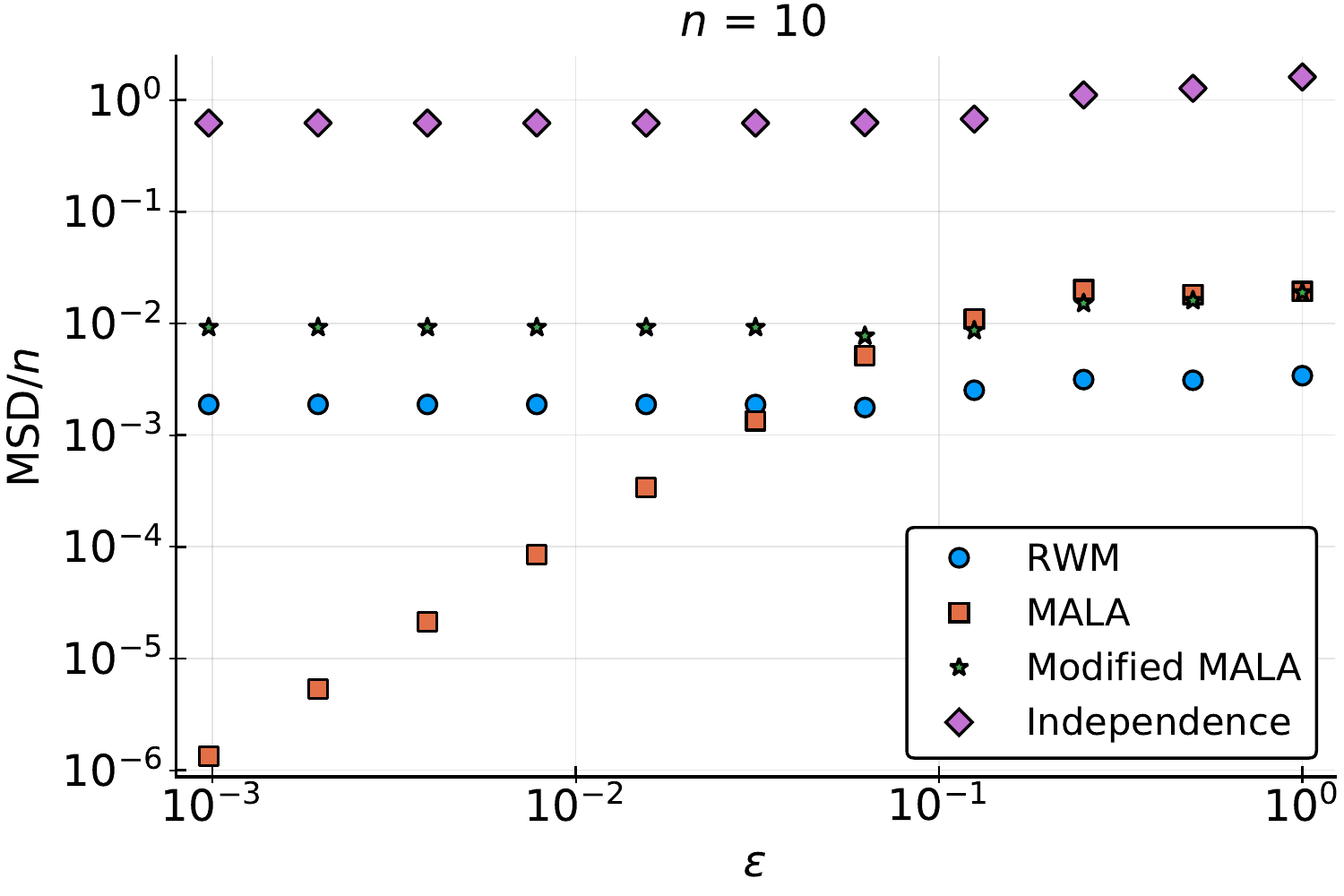}}
    \subfigure[]{\includegraphics[width=6.25cm]{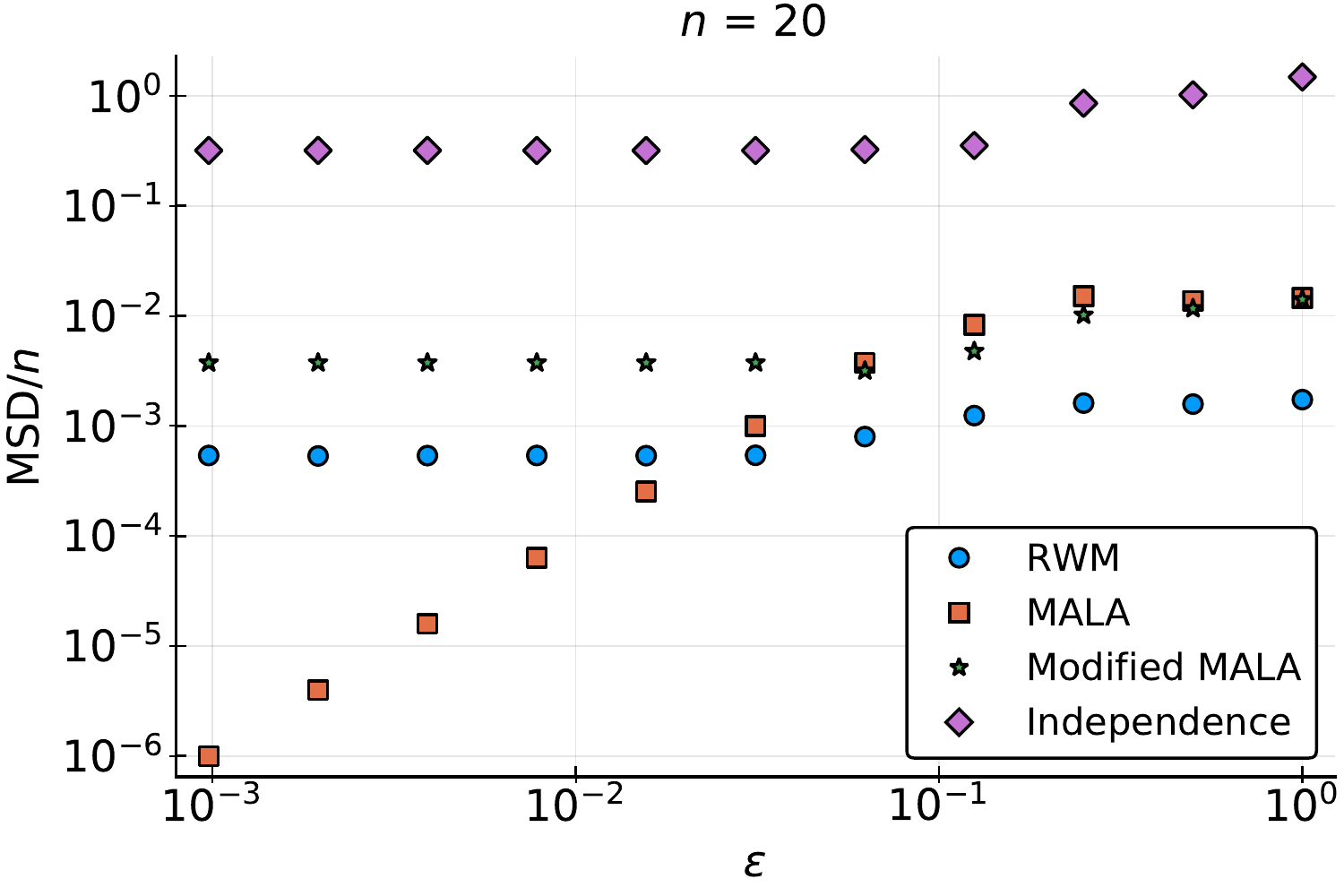}}

    \caption{Comparison of performance for \eqref{e:roughdouble} in different dimensions for a range of $\eps$.  Observe that the optimal $\sigma\propto \eps$, as predicted.  Also note that while MALA maintains a comparatively higher acceptance rate, because its step size is shrinking, the mixing rate, as measured by the MSD vanishes with $\eps$ (color online).}
    \label{fig:roughdoubleopt}
\end{figure}

\begin{table}
    \centering
    \caption{Ratios of the MSD at optimal $\sigma$ for Modified MALA and  Independence sampler to the RWM for the rough double well landscape, \eqref{e:roughdouble}.  As $\eps\to 0$, the amplification in performance saturates, but increases with dimension.  The Independence sampler method appears to give better performance in moderate to high dimensions.  In contrast to the harmonic potential, both the Modified MALA sampler and the Independence sampler show signs of decreased performance in high dimensions.}
    \label{tab:dbleamp}
\centering
    \subtable[Modified MALA Sampler]{\begin{tabular}{l rrrrr}
\hline\hline
$\epsilon$ & $n= 1$ & $n= 10$ & $n= 20$ & $n= 50$ & $n= 100$ \\
\hline
$2^{-5}$ & 0.31 & 4.90 & 6.93 & 3.84 & 1.22 \\
$2^{-6}$ & 0.30 & 4.89 & 6.98 & 11.45 & 1.80 \\
$2^{-7}$ & 0.29 & 4.89 & 6.96 & 5.22 & 1.01 \\
$2^{-8}$ & 0.29 & 4.89 & 6.97 & 11.69 & 15.40 \\
$2^{-9}$ & 0.29 & 4.89 & 7.03 & 1.00 & 1.00 \\
$2^{-10}$ & 0.29 & 4.90 & 6.94 & 11.64 & 17.30 \\
\hline
\end{tabular}}

    \subtable[Independence Sampler]{\begin{tabular}{l rrrrr}
\hline\hline
$\epsilon$ & $n= 1$ & $n= 10$ & $n= 20$ & $n= 50$ & $n= 100$ \\
\hline
$2^{-5}$ & 6.30 & 330.89 & 589.73 & 441.69 & 62.95 \\
$2^{-6}$ & 6.31 & 330.06 & 594.21 & 1317.64 & 262.12 \\
$2^{-7}$ & 6.31 & 330.42 & 592.44 & 434.52 & 0.00 \\
$2^{-8}$ & 6.32 & 330.32 & 594.40 & 1343.84 & 2150.46 \\
$2^{-9}$ & 6.32 & 330.32 & 598.89 & 0.00 & 0.00 \\
$2^{-10}$ & 6.31 & 330.81 & 592.33 & 1335.96 & 2287.34 \\
\hline

\end{tabular}}

\end{table}

\begin{table}
    \centering
    \caption{Ratios of the MSD at optimal $\sigma$ for Modified MALA and  Independence Sampler to the RWM for the rough double well landscape, \eqref{e:roughdouble} with $x_0 = (0,0,\ldots, 0)^T\in \R^n$.  Compare with Table~\ref{tab:dbleamp}.}
    \label{tab:dbleamp2}
\centering
    \subtable[Modified MALA Sampler]{\begin{tabular}{l rrrrr}
\hline\hline
$\epsilon$ & $n= 1$ & $n= 10$ & $n= 20$ & $n= 50$ & $n= 100$ \\
\hline
$2^{-5}$ & 0.31 & 4.91 & 6.95 & 3.88 & 1.22 \\
$2^{-6}$ & 0.30 & 4.89 & 6.99 & 11.49 & 1.73 \\
$2^{-7}$ & 0.29 & 4.89 & 6.98 & 11.54 & 7.48 \\
$2^{-8}$ & 0.29 & 4.89 & 6.98 & 11.43 & 16.84 \\
$2^{-9}$ & 0.29 & 4.88 & 6.99 & 11.67 & 17.11 \\
$2^{-10}$ & 0.29 & 4.88 & 6.96 & 11.60 & 15.67 \\
\hline
\end{tabular}}

    \subtable[Independence Sampler]{\begin{tabular}{l rrrrr}
\hline\hline
$\epsilon$ & $n= 1$ & $n= 10$ & $n= 20$ & $n= 50$ & $n= 100$ \\
\hline
$2^{-5}$ & 6.31 & 331.09 & 591.19 & 439.82 & 72.42 \\
$2^{-6}$ & 6.31 & 330.56 & 594.66 & 1339.65 & 191.19 \\
$2^{-7}$ & 6.32 & 330.24 & 594.54 & 1337.39 & 976.85 \\
$2^{-8}$ & 6.32 & 330.13 & 594.19 & 1311.59 & 2562.82 \\
$2^{-9}$ & 6.32 & 329.43 & 595.04 & 1355.55 & 2566.64 \\
$2^{-10}$ & 6.31 & 329.40 & 593.38 & 1325.89 & 2199.09 \\
\hline

\end{tabular}}

\end{table}

In Figure~\ref{f:paths2}, we repeat the experiment from Figure~\ref{f:paths1} with $n=1$, but for the Modified MALA and the Independence sampler schemes.  Computing at $\sigma=1$, we see better mixing than in Figure~\ref{f:paths1}, for the same rough energy landscape.

\begin{figure}

\subfigure[Modified MALA]{\includegraphics[width=6.25cm]{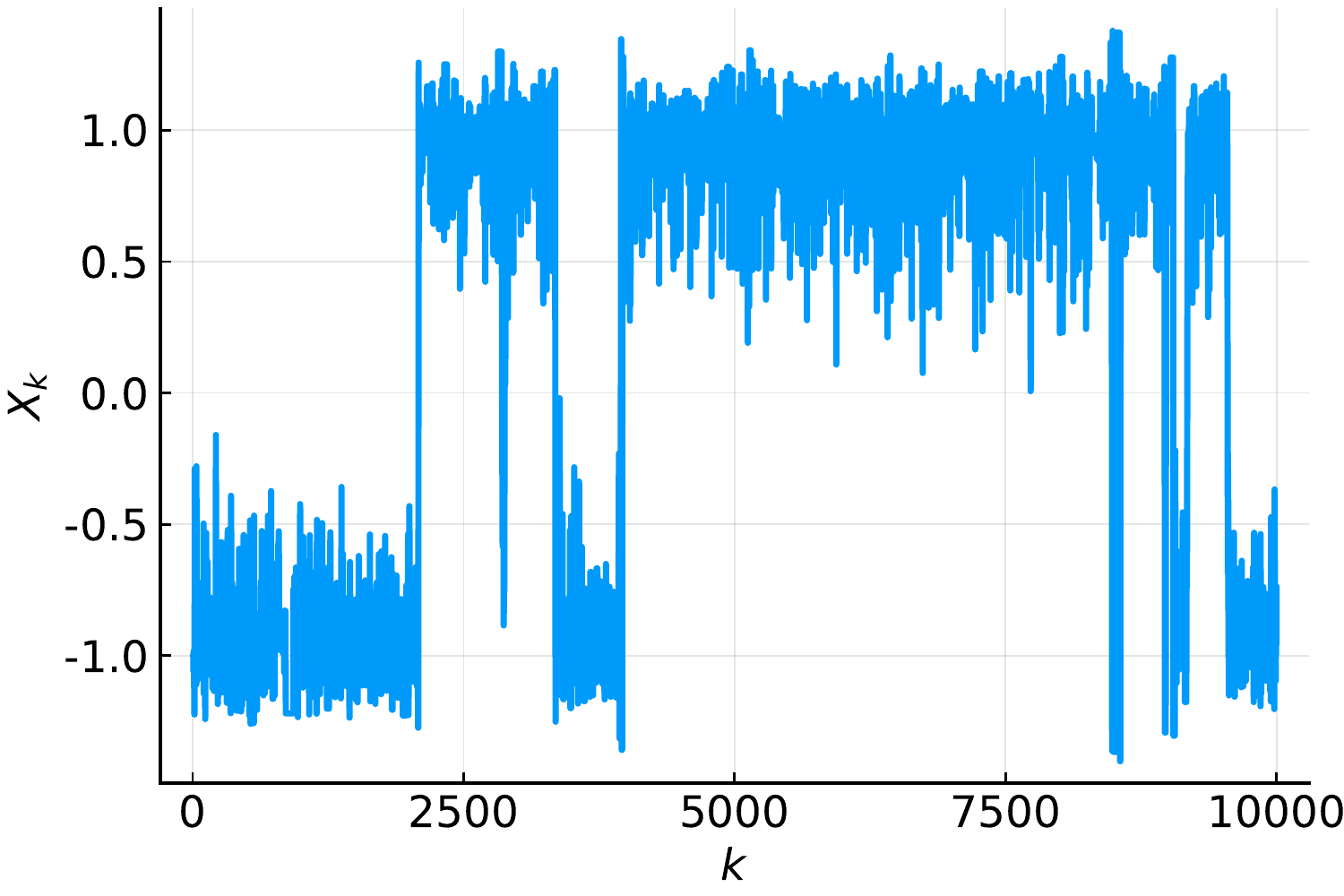}}
\subfigure[Independence]{\includegraphics[width=6.25cm]{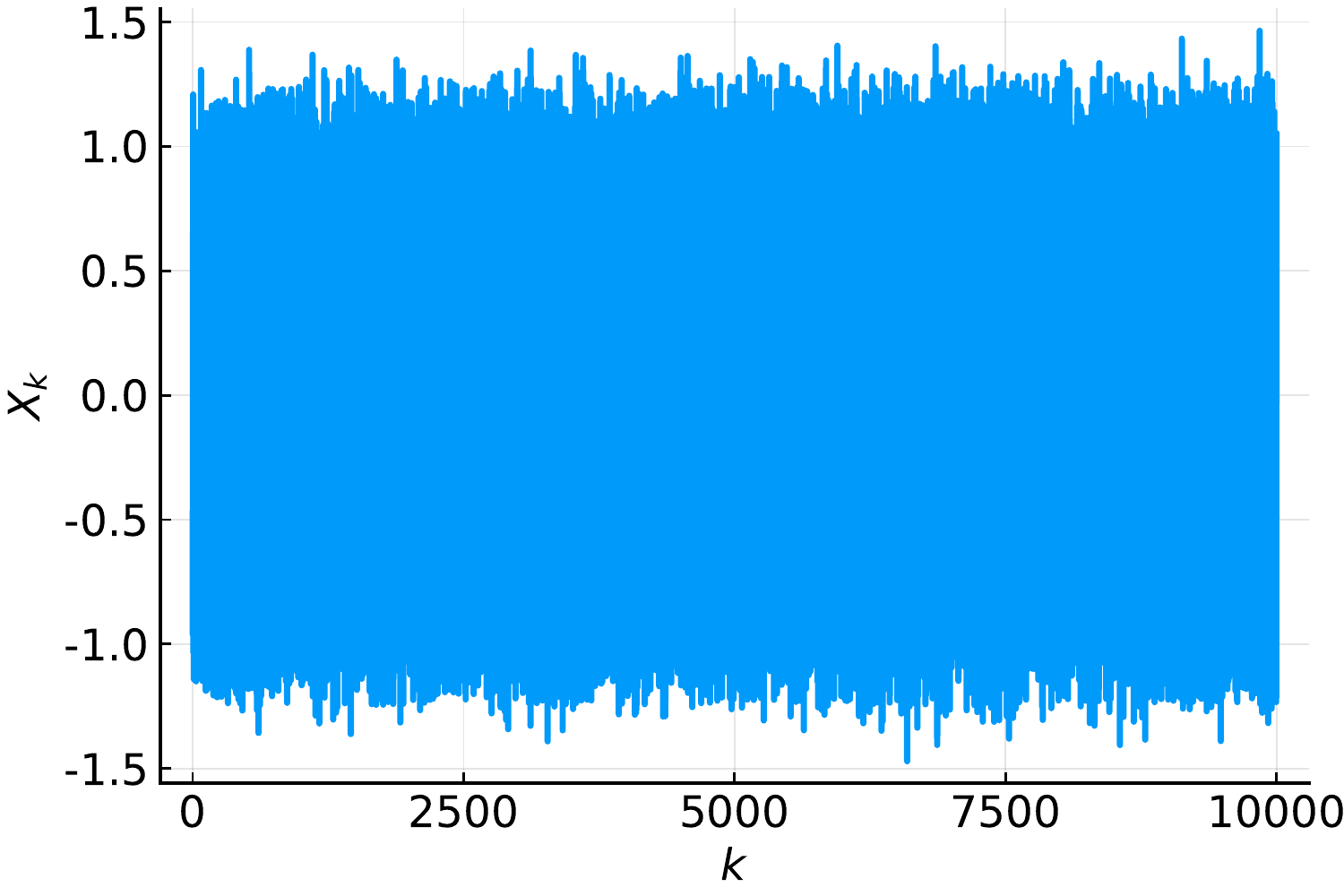}}

\caption{Sample paths corresponding to the energy landscapes in Figure~\ref{f:rough1}(b).  These were generated using MALA with $\sigma=1$.  Compare with standard MALA sampling in Figure~\ref{f:paths1}(b)
(color online).}
\label{f:paths2}
\end{figure}

\subsection{Results in dimension one.}

We briefly consider the behavior in dimension one for the harmonic potential and the double well.  For both energy landscapes, as shown in Figure~\ref{fig:opt_d1}, the optimal $\sigma\propto \sqrt{\eps}$, not $\sigma\propto \eps$.  Thus, while the performance of MALA will also degrade in $n=1$, a different scaling appears here.  This is consistent with the  harmonic problem analyzed in Section~\ref{s:oned}.

\begin{figure}

    \subfigure[Harmonic Potential]{\includegraphics[width=6.25cm]{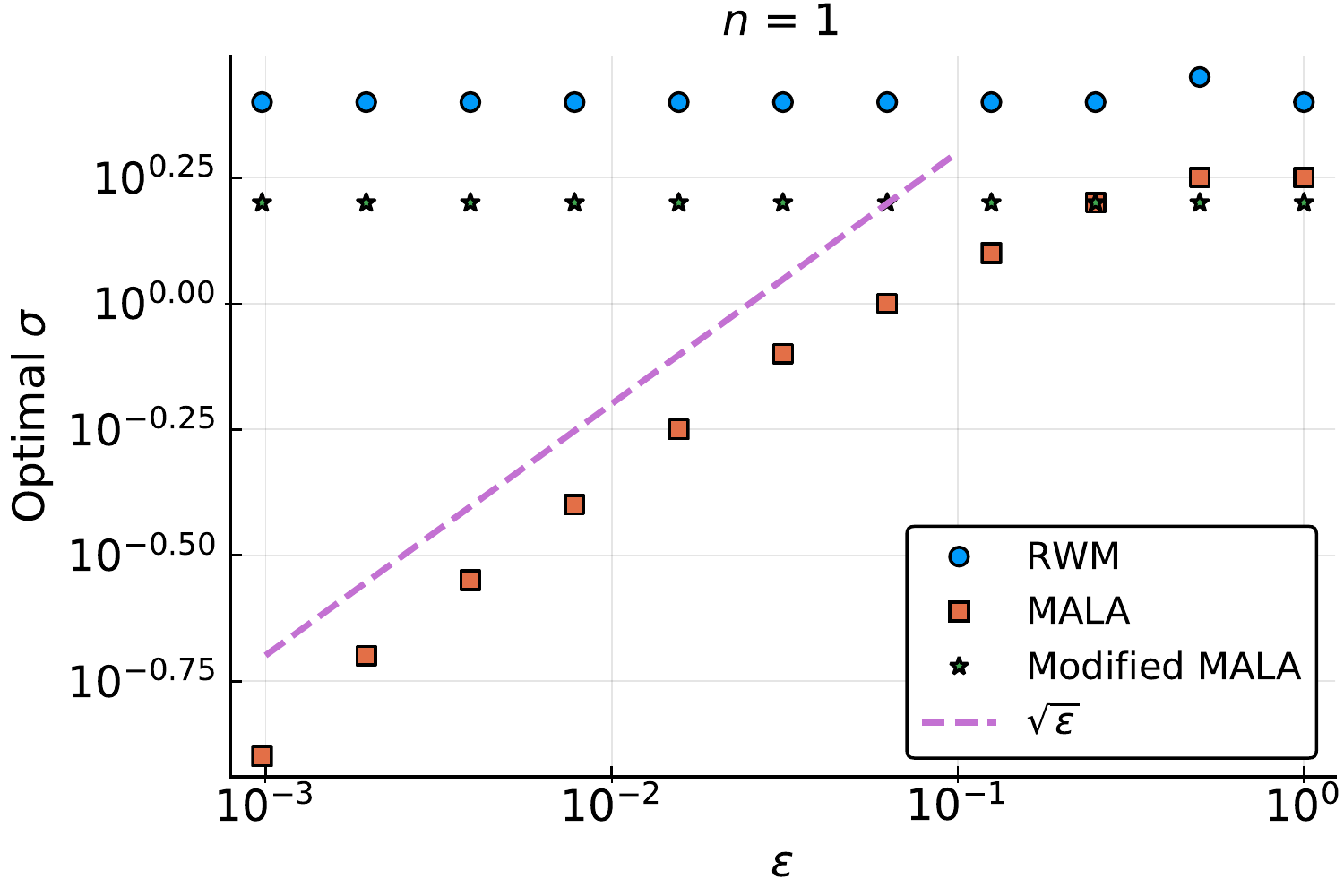}}
    \subfigure[Double Well Potential]{\includegraphics[width=6.25cm]{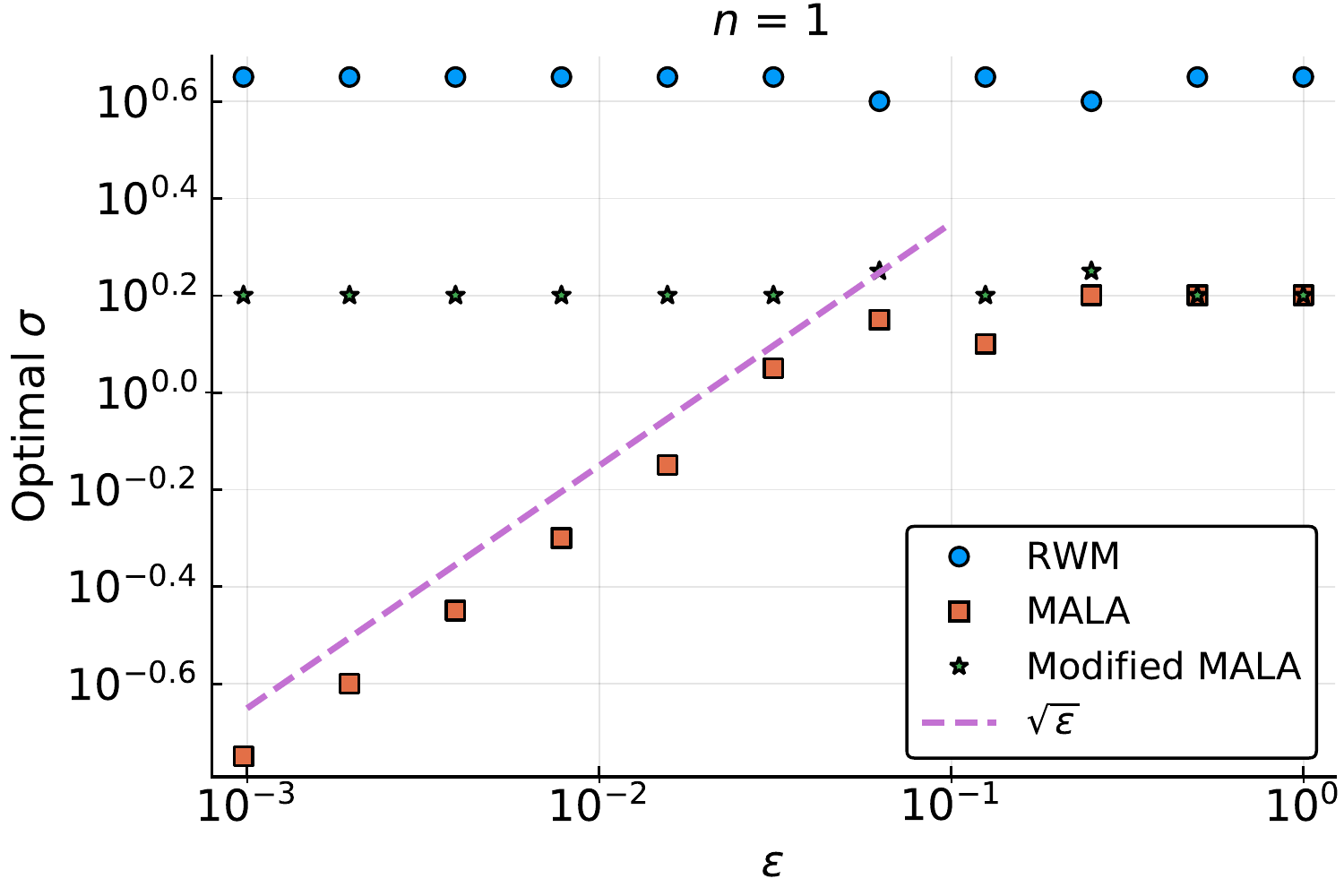}}
    \caption{Optimal $\sigma$ values for the rough harmonic and double well potentials in $n=1$.  In contrast to results in higher dimensions
    (see Figures~\ref{fig:roughharmopt} and \ref{fig:roughdoubleopt}), for MALA, $\sigma\propto \sqrt{\eps}$ for $n=1$ (color online).}
    \label{fig:opt_d1}
\end{figure}

\subsection{Local entropy approximations.}
\label{s:locent_numerics}

We briefly consider the possibility of using local entropy, introduced in Section~\ref{s:locent} with \eqref{e:Vgamma} and \eqref{e:gradVgamma}.  This approach may be of use in problems where no straightforward scale separation, of the type found in \eqref{e:V1}, is present in the energy landscape.  As motivation we consider the potential in dimension one
\begin{equation}
\label{e:randomdble}
    V(x) = \underbrace{(x^2-1)^2}_{V_0(x)} + \sum_{j=1}^M c_j \cos(k_j x)\,,
\end{equation}
where the $c_j$ and $k_j$ are random, from a particular distribution.  Indeed, taking $M=10$ and $c_j \sim U(-.1, .1)$, $\log(k_j)\sim U(10^1,10^3)$, we obtain the landscape shown in Figure~\ref{fig:randomdble}, along with the leading contribution, $V_0$, and $V_\gamma$ obtained through a numerical quadrature at $\beta =5 $ and $\gamma = 0.05$.  Clearly, the local entropy approximation eliminates the fine scale roughness found in the original potential.

In general, Monte Carlo approximations of $V_\gamma$ and $\nabla V_\gamma$ will be needed, as a quadrature will be impractical in high dimensions.  An example of this is shown in Figure~\ref{fig:randomdble_mc}.
In Figure~\ref{fig:randomdble_mc}(a), we compare $V_0$, $V_\gamma$ and a Monte Carlo estimate of $V_\gamma$ computed using \eqref{e:vgamma_mc} with $\NS_{\mathrm{s}}=10^2$.

In Figure~\ref{fig:randomdble_mc}(b), we compare $\nabla V_0$, $\nabla V_\gamma$, and a Monte Carlo estimate of $V_\gamma$, computed using \eqref{e:gradvgamma_mc}.  In this latter figure we take  $\NS_{\mathrm{s}} = 10^4$, and each sample is obtained by taking $\NS_{\rm \delta t} =4$ time steps with $\delta t=1$ in a variant of MALA that exactly linearly integrates the Ornstein-Uhlenbeck component of \eqref{e:local_sde}.  Obviously, there are many options for how  Monte Carlo estimates of $V_\gamma$ and $\nabla V_\gamma$ can be obtained.  We will return to this in the discussion.

\begin{figure}

    \subfigure[]{\includegraphics[width=6.25cm]{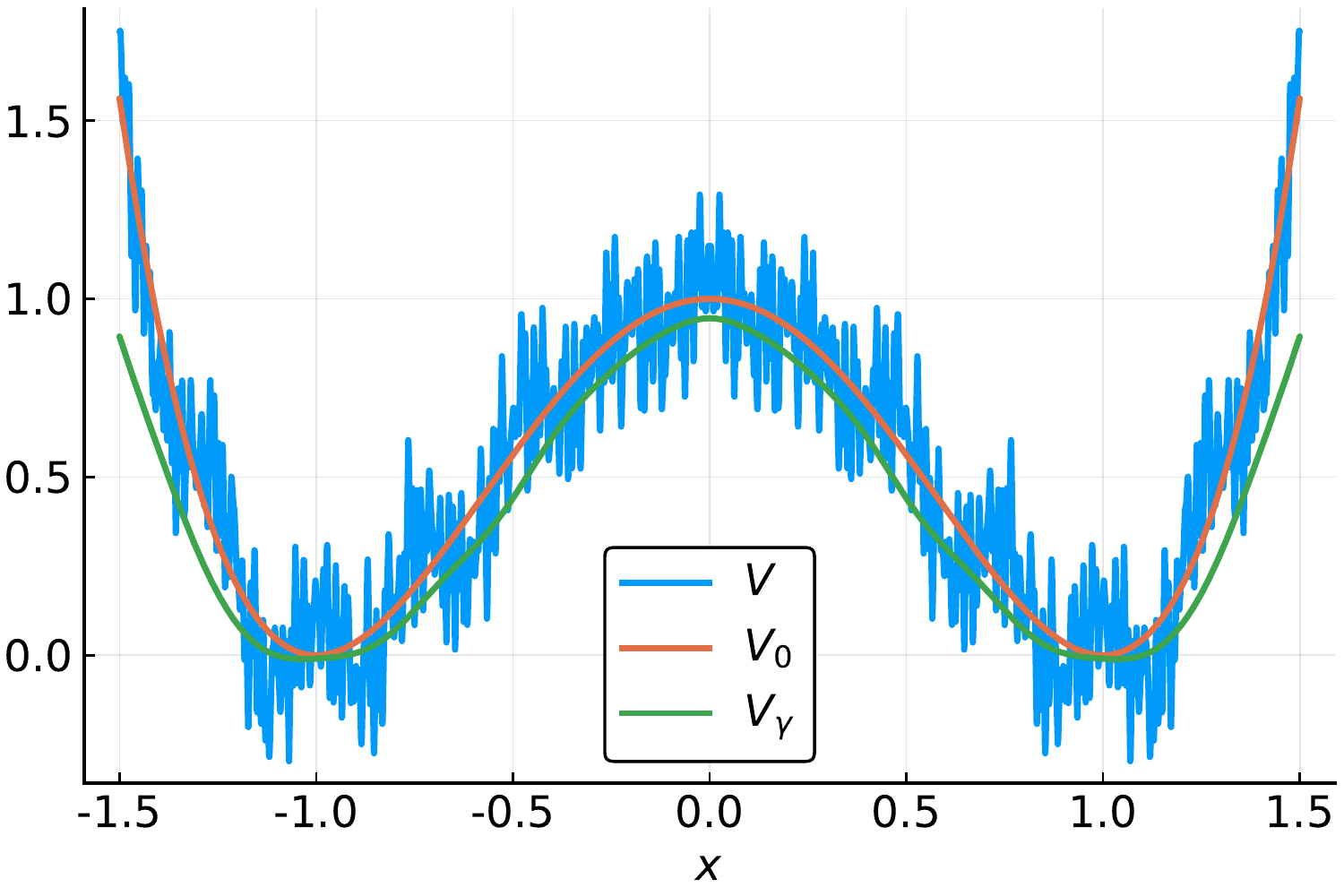}}
    \subfigure[]{\includegraphics[width=6.25cm]{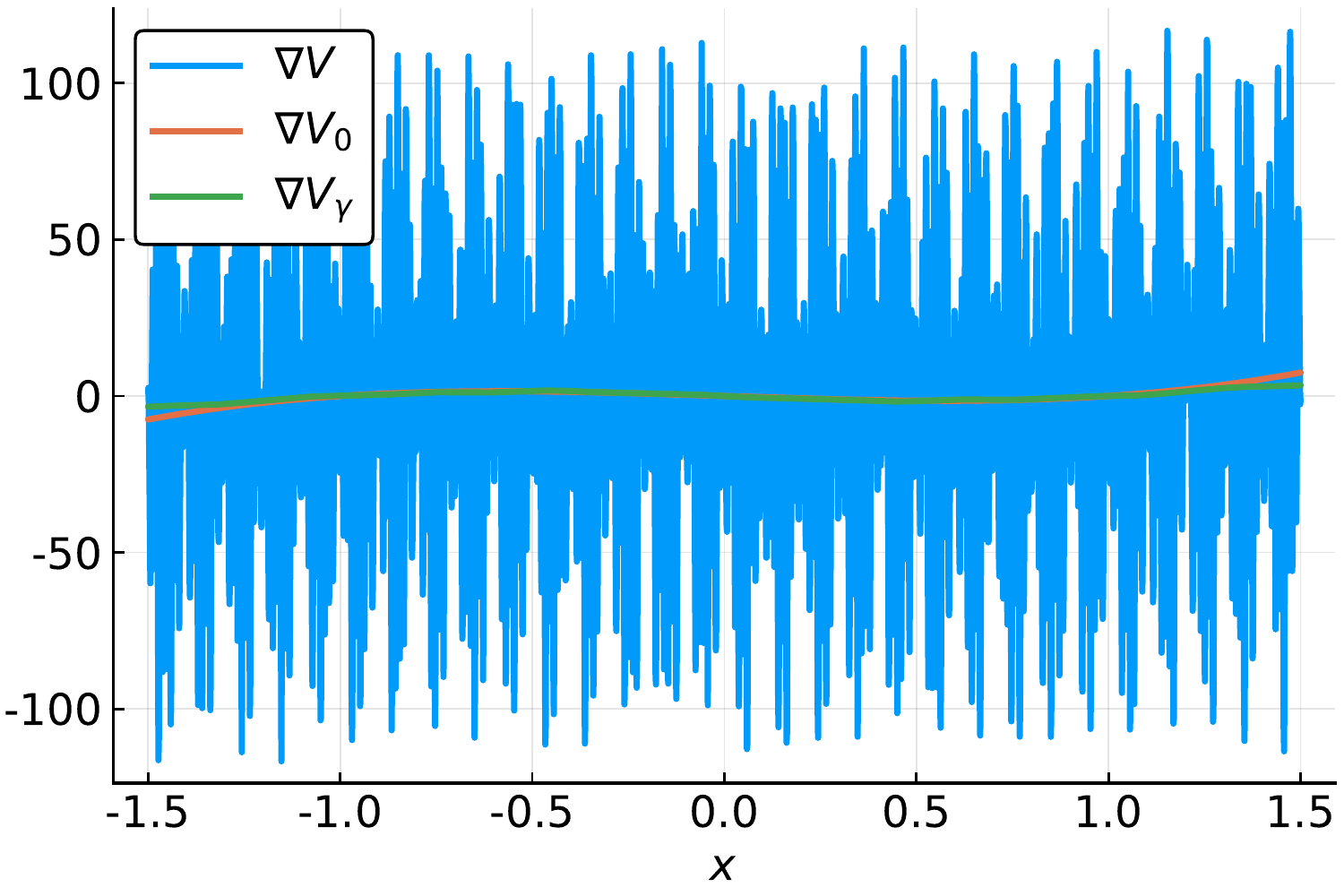}}
    \caption{A particular realization of the energy landscape given by \eqref{e:randomdble}, along with its leading order, long range component $V_0(x) = (x^2-1)^2$, and the local entropy approximation, $V_\gamma$, computed using \eqref{e:Vgamma} with $\beta=5$ and $\gamma = 0.05$ (color online).}
    \label{fig:randomdble}
\end{figure}

\begin{figure}
    \subfigure[]{\includegraphics[width=6.25cm]{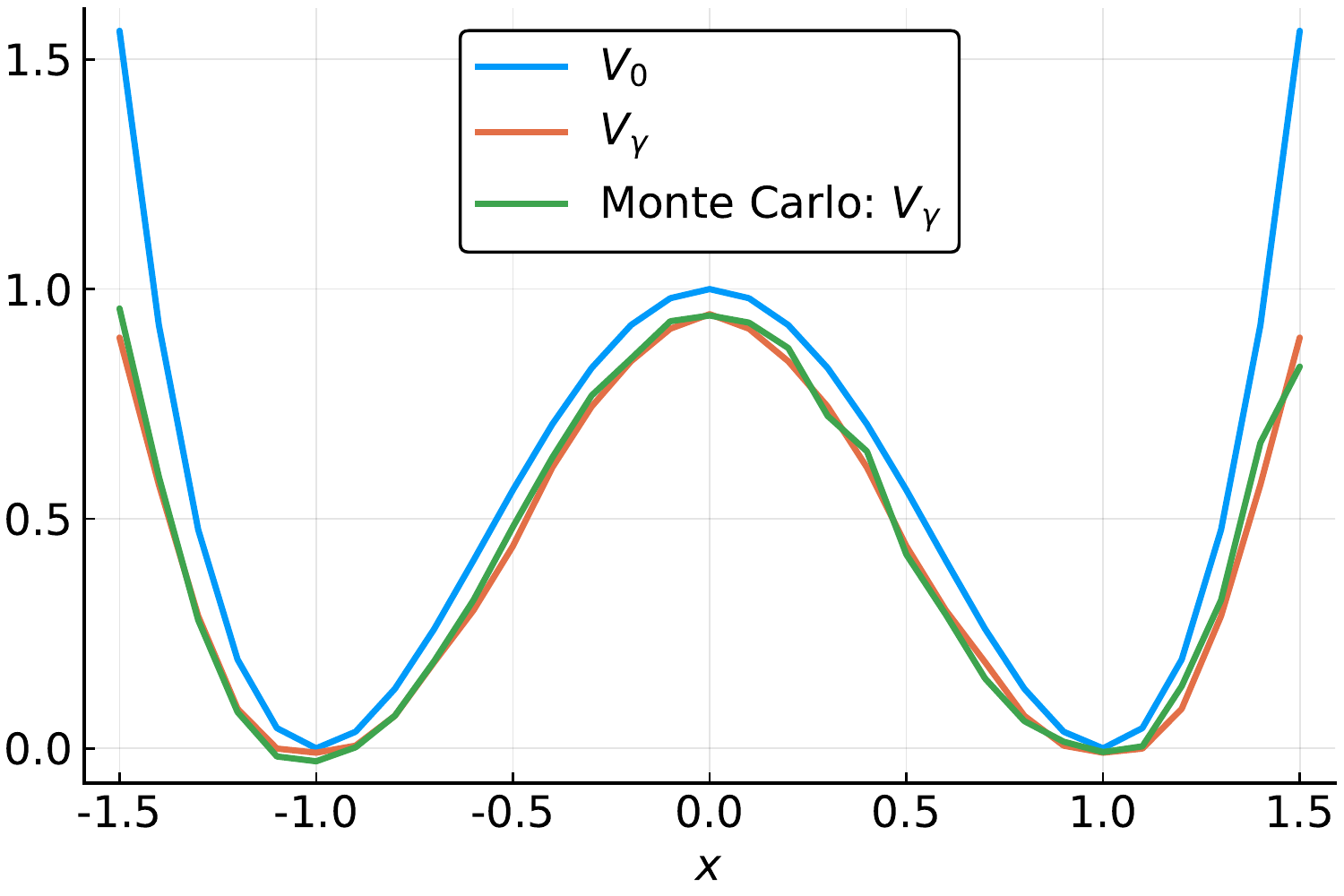}}
    \subfigure[]{\includegraphics[width=6.25cm]{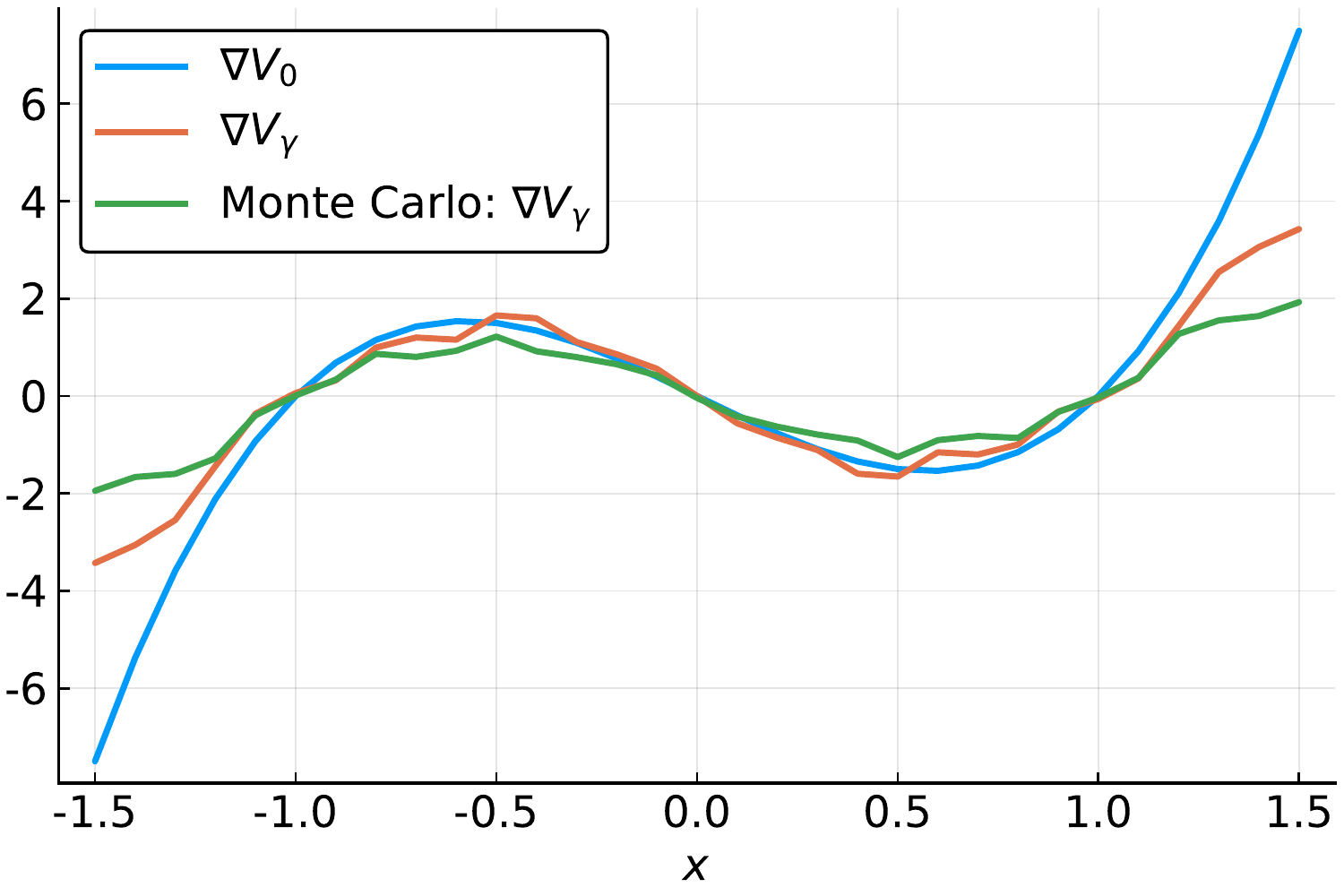}}
    \caption{Monte Carlo estimates of $V_\gamma$ and $\nabla V_\gamma$ for the landscapes in
    Figure~\ref{fig:randomdble} (color online).}
    \label{fig:randomdble_mc}
\end{figure}

\section{Discussion.}\label{s:disc}

We have examined a class of rough energy landscapes where the performance of MALA can be driven to zero at a fixed dimension.  {When $\sigma$ is inadequately scaled with $\eps$, Proposition~\ref{p:malascaling} and
Theorem~\ref{t:malascaling2} reveal that MALA will fail to be globally robust, \eqref{e:Globalrobustness}.  Even if $\sigma$ is optimally scaled, according to the empirical estimates, the numerical simulations indicate that it also suffers.  }

{There are several outstanding questions on MALA that merit investigation.  First, it would be desirable to develop a rigorous understanding of why the optimal $\sigma$ scaling is $\sigma\propto \sqrt{\eps}$ in dimension one, while it is $\sigma\propto \eps$ for $n$ sufficiently large.  Next, there is an analysis of why the spectral gap collapses at the optimal scaling as $\eps\to 0$; recall that our result in Theorem~\ref{t:malascaling2} does not apply to either scaling. Another,  related, challenge is how to quantify the size of the small gradient set, \eqref{e:smallset}, outside of the separable case, as $\eps\to 0$, or, alternatively, how to avoid analyzing the small set.  Separately, there is the question of how $\sigma$ should be scaled for MALA before it has reached stationarity.}

We have also demonstrated that RWM along with the Modified MALA and the Independence sampler are insensitive to the roughness of the landscape and are globally robust.  While modified MALA and the Independence sampler  require a smoothed energy landscape for proposals, RWM is a viable option without additional information.  Indeed, Corollary~\ref{cor:Teps} tells us that for rough energy landscapes like \eqref{e:V1}, with  roughness bounded uniformly in $\eps$, if the proposal of the MCMC scheme is $\eps$-independent, then the performance will be insensitive to $\eps$.  Corollary~\ref{cor:Teps2} indicates that other methods that have a weak $\eps$-dependence, such as the tamed MALA, will also exhibit robustness to the roughness.

As the results in Section~\ref{s:numerics} show, there still appear to be challenges in high dimensions.  The degradation of the Independence sampler shown in Table \ref{tab:dbleamp} is a consequence of an unusually difficult starting point,  $x_0 = (-1,-1,\ldots,-1)^{T}\in \R^n$, as the following calculation shows.

 For $\eps=2^{-7}$, $\cos(-1/\eps) = -0.69$ and at $\eps=2^{-9}$, $\cos(-1/\eps) = -1.00$.  Consequently, every single component of the initial guess is close to a global minimum of the energy landscape and almost every proposal will be at a position in state space in which every coordinate has higher energy.  Starting at any $X_0 = x_0$,
\begin{equation}
\begin{split}
    \E[|X_{1} - X_0|^2]/n &= \E[|X_{1} - X_0|^2]/n = \E[|X_{1}^{\rm p} - X_0|^2 1 \wedge e^{R(X_0,X_1^{\rm p})}]/n \\
    &\leq \sqrt{\E[|X_{1}^{\rm p} - X_0|^4/n^2} \sqrt{\E[ 1 \wedge e^{R(X_0,X_1^{\rm p})}]}\,.
    \end{split}
\end{equation}
Under our assumptions on $U = V_0$, the fourth moment term is bounded by a constant independent of both $n$ and $\eps$.  In the above expressions the expectation is over the proposal.  The exponent in the acceptance probability is
\begin{equation}
    R(X_0,X_1^{\rm p}) = \underbrace{R_n(x,y)}_{x = X_0, y = X_1^{\rm p}} = \sum_{i=1}^n r(x_i,y_i) = \sum_{i=1}^n r_i\,.
\end{equation}
With $x_1 = x_2=\ldots x_n = -1$, the $r_i$'s are thus i.i.d.  Furthermore, since
\begin{equation}
    r(x,y) =  v_1(x/\eps) - v_1(y/\eps) \Rightarrow |r(x,y)|\leq  \osc v_1<\infty\,.
\end{equation}
Suppose $\mu_r \equiv \E[r_i]<0$; $\E[R_n] = n \mu_r$.  Then using Hoeffding's inequality, \cite{vershynin2018high},
\begin{equation}
\label{e:accept_bound}
\begin{split}
    \E[{1 \wedge e^{R_n(x,y)}}] &= \E[1 \wedge e^{R_n(x,y)} 1_{R_n-n\mu_r< n|\mu_r|/2}] +\E[1 \wedge e^{R_n(x,y)} 1_{R_n-n\mu_r\geq n|\mu_r|/2}]\\
    &\leq e^{-n|\mu_r|/2} + \P(R_n-n\mu_r\geq n|\mu_r|/2)\\
    &<  e^{-n|\mu_r|/2}  + \exp \paren{-\frac{n}{8}\frac{|\mu_r|^2}{(\osc v_1)^2} }\,.
\end{split}
\end{equation}
Consequently the mean acceptance and the sampling performance will both be driven to zero exponentially fast as $n\to \infty$ once $\mu_r < 0$.  Checking this numerically with $x_1=-1$, $\beta=5$, $\eps =2^{-9}$, and $v_0 = \beta \cdot (x^2-1)^2$, $v_1 = \beta \cdot \tfrac{1}{8}\cos(x)$,
\begin{equation}
\begin{split}
    \mu_r &=   v_1(x_1/\eps) - \E[v_1(y_1/\eps)] =  v_1(x_1/\eps) -  \int v_1(y_1/\eps) z_0^{-1}e^{- v_0(y_1)}dy_1\\
    &=-0.623\,.
\end{split}
\end{equation}
In fact, due to the oscillatory nature of the integral, $\mu_r\approx v_1(x_1/\eps)$.  With such a value of $\mu_r$, it is straightforward to see, from \eqref{e:accept_bound}, that the performance will rapidly be driven to zero for large enough $n$.  In contrast, in Table~\ref{tab:dbleamp2}, we see better results when $x_1=0$.  Repeating the above computation, $\mu_r = 0.625>0$, making it resistant to the previous pathology.

This phenomenon does not plague RWM because, in contrast to the independence sampler, RWM has a parameter, $\sigma$, that can be tuned to maintain an $\bigo(1)$ mean acceptance probability as $n\to \infty$.  Indeed, the scaling $\sigma^2 =
\bigo(n^{-1})$ for RWM, discussed in Section~\ref{s:highd}, ensures this.  We conjecture that a Metropolis within Gibbs sampler, by which only a subset of the $n$ coordinates are altered in each step of the sampler, will alleviate this problem in the Independence sampler.

Generating the samples from the smooth distribution for the Independence sampler is also a challenge in the general case.  We believe this can be easily  accomplished using MALA or HMC.  These other samplers should be well behaved on the smooth energy landscape, allowing for the straightforward construction of approximately independent proposals for the rough landscape.

Smoothed energy landscapes might be available through a known decomposition like \eqref{e:V1}.  We also conjecture that the optimal choice of approximate landscapes for potentials like \eqref{e:V1} corresponds to the homogenized energy landscapes discussed in \cite{duncan2016noiseinduced,arous2003multiscale,owhadi2003anomalous}.  Unfortunately, computing such smoothed energies requires solving an elliptic PDE in a space of the same dimension as the considered state space; a Monte Carlo estimator of the solution may partially overcome this difficulty.  Alternatively, physical knowledge of the system may motivate some choice for a surrogate smoothed landscape.

When these options are not available, the local entropy approximation is another possibility.  The challenge to using local entropy, which we do not further develop here, is that unless the problem is in a very low dimension, auxiliary sampling algorithms must be formulated and tuned  to first estimate $V_{\gamma}$ and $\nabla V_{\gamma}$.  This task would involve determining a sample size, a sampling strategy, and some form of parallelization in order to outperform  simpler alternatives like RWM.

Finally, the weakness of MALA in the presence of roughness can be seen as the MCMC manifestation of stiffness.  We conjecture that it is a generic problem in gradient based MCMC methods, including HMC.  Indeed, the magnitude of $\nabla V$ in HMC will constrain the time step of, for instance, the Verlet method used in the Hamiltonian flow subroutine.  Thus, the number of force calls per HMC step will tend to increase with roughness degrading the overall performance. This has been partially addressed in \cite{Livingstone1908}, where the authors considered potentials of the form \eqref{e:Vmismatch} and rigorously established that HMC suffers from scaling issues.

\clearpage
\bibliographystyle{elsarticle-num}
\bibliography{refs}

\clearpage
\appendix
\section{Details of the mean square displacement computation}
\label{s:derviative}

In this section, we give details of the derivation of \eqref{e:Msigopt2}.  Differentiating \eqref{e:Msig} with respect to $\sigma$
\begin{gather}
    M' = \E_\mu\bracket{\int |x-y|^2 \paren{F'(R) \partial_\sigma R g(y;x,\sigma) + F(R) \partial_\sigma g(y;x,\sigma)} dy}\\
    F'(r) = F(r)( 1- F(r))\\
    \partial_\sigma R = \tfrac{\sigma}{4}\paren{|\nabla V(x)|^2 - |\nabla V(y)|^2},\\
    \begin{split}
    \partial_\sigma g &= - \tfrac{n}{\sigma} g\\
    &\quad+\tfrac{1}{\sigma^3} \paren{|y-x + \tfrac{\sigma^2}{2}\nabla V(x)|^2 -2(y-x + \tfrac{\sigma^2}{2}\nabla V(x))\cdot \tfrac{\sigma^2}{2}\nabla V(x) }g\,.
    \end{split}
\end{gather}
Consequently,
\begin{equation}\label{e:dMdsig}
\begin{split}
        M' &= \frac{\sigma}{4}\E\bracket{|x-y|^2F(1-F)\paren{|\nabla V(x)|^2 - |\nabla V(y)|^2}}-\frac{n}{\sigma}M \\
        & \quad + \frac{1}{\sigma^3}\E[|x-y|^2F |y-x + \tfrac{\sigma^2}{2}\nabla V(x)|^2]\\
        &\quad - \frac{2}{\sigma^3}\E[|x-y|^2F(y-x + \tfrac{\sigma^2}{2}\nabla V(x))\cdot \tfrac{\sigma^2}{2}\nabla V(x) ]\,.
\end{split}
\end{equation}
Assuming that the optimal $\sigma$ occurs at a finite value, the first order condition $M'=0$ will hold.  The expression \eqref{e:dMdsig}, at the optimal value, can then be expressed as
\begin{equation}
\label{e:Msigopt1}
    M = \frac{1}{\sigma^2 n}\E\bracket{|x-y|^2F \paren{|x-y|^2 - |\tfrac{\sigma^2}{2}\nabla V(x)|^2F -|\tfrac{\sigma^2}{2}\nabla V(y)|^2 (1-F)  }}\,.
\end{equation}
This calculation makes use of the identity
\begin{equation}
\label{e:MALAident}
\begin{split}
    &|y-x + \tfrac{\sigma^2}{2}\nabla V(x)|^2-2 (y-x + \tfrac{\sigma^2}{2}\nabla V(x))\cdot \tfrac{\sigma^2}{2}\nabla V(x)\\
    & = |y-x|^2 - |\tfrac{\sigma^2}{2}\nabla V(x)|^2\,.
    \end{split}
\end{equation}
Since $M = \E[|x-y|^2 F]$, we can re-write \eqref{e:Msigopt1} to get \eqref{e:Msigopt2}.

Next, note that for the Barker proposal,  $F(r) = (1+e^{-r})^{-1} = 1 - F(-r)$ so
\begin{equation*}
    F(y,x) = F(R(y,x)) = 1 - F(-R(y,x)) = 1 - F(R(x,y)) =  1- F(x,y)\,.
\end{equation*}
Additionally, recall that the Metropolis method satisfies detailed balance.  Therefore,
\begin{equation*}
\begin{split}
   & \E[|x-y|^2 F(x,y)(1-F(x,y))|\nabla V(y)|^2]\\
   &= \int |x-y|^2 F(x,y)(1-F(x,y))|\nabla V(y)|^2 g(y;x,\sigma)dy\mu(dx)\\
   & = \int |x-y|^2 F(y,x)(1-F(x,y))|\nabla V(y)|^2 g(x;y,\sigma)dx\mu(dy)\\
   & = \int |x-y|^2 F(y,x)^2|\nabla V(y)|^2 g(x;y,\sigma)dx\mu(dy)\,,
    \end{split}
\end{equation*}
and we conclude
\begin{equation*}
    \E\bracket{|x-y|^2 F\paren{ F|\nabla V(x)|^2 +(1-F)|\nabla V(y)|^2}} = 2 \E\bracket{|x-y|^2 F^2|\nabla V(x)|^2}\,.
\end{equation*}
Using this result in \eqref{e:Msigopt1} gives us \eqref{e:Msigopt2}.

\section{Details of computations in dimension one}
\label{s:dimensionone}

In this section, we provide a derivation of \eqref{e:MSDone}.  We denote $g_x(x)$  the Gaussian density for $N(0, \eps)$ and $g_y(y|x)$ the Gaussian density for $N((1-\sigma^2\eps^{-1}/2)x, \sigma^2)$.  Then
\begin{equation}
    \label{e:Aone}
    \begin{split}
    A_1 &= \E[1 \wedge e^{R(x,y)}]\\
     &= \int_{R(x,y)>0}  g_x(x) g_y(y|x) dxdy  + \int_{R(x,y)\leq 0}   e^{R(x,y)} g_x(x) g_y(y|x) dxdy\,,
    \end{split}
\end{equation}
and we observe that
\begin{equation}
\label{e:xysymm}
    e^{R(x,y)} g_x(x) g_y(y|x) = g_x(y) g_y(x|x)\,.
\end{equation}
Furthermore, we note that the set $R(x,y)>0$ corresponds to $|x|>|y|$ and $R(x,y)<0$ corresponds to $|x|< |y|$. We can thus use the symmetry $(x,y)\mapsto (y,x)$ to reduce
\eqref{e:Aone} to
\begin{equation}
\label{e:A1split}
\begin{split}
    A_1 &= 2 \int_{R(x,y)>0} g_x(x)g_y(y|x)dxdy\\
    & = \int_{x=-\infty}^\infty \int_{y=-|x|}^{|x|} 2  g_x(x)g_y(y|x)dxdy\\
    &= \underbrace{\int_{x=-\infty}^0 \int_{y=x}^{0}(\ldots) dydx}_{I} + \underbrace{\int_{x=-\infty}^0 \int_{y=0}^{-x}(\ldots) dydx}_{II} \\
    &\quad +  \underbrace{\int_{x=0}^\infty \int_{y=-x}^{0}(\ldots) dydx}_{III}+  \underbrace{\int_{x=0}^\infty \int_{y=0}^{x}(\ldots) dydx}_{IV}\,.
\end{split}
\end{equation}
Since the integrand, $2(y-x)^2 g_x(x) g_y(y|x)$ is invariant to $(x,y)\mapsto (-x,-y)$, $I=IV$ and  $II=III$.  Thus we have
\begin{equation}
A_1 = 4 \int_{x=0}^\infty \int_{y=-x}^x g_x(x)g_y(y|x)dxdy\,.
\end{equation}
Using Mathematica and making the change of variables, $\xi = x/\sqrt{\eps}$ and $\delta = \eps^{-1}\sigma^2/2$,
\begin{equation}
\label{e:A1result}
A_1 = \frac{2}{\pi}\arctan\paren{(2\delta^{-1})^{3/2}}\,.
\end{equation}

Analogously,
\begin{equation}
\label{e:MSDone1}
\begin{split}
    \mathrm{MSD}_1 = \E[(y-x)^2 1 \wedge e^{R(x,y)}] &= \int_{R(x,y)>0} (y-x)^2 g_x(x) g_y(y|x) dxdy \\
    &\quad + \int_{R(x,y)\leq 0} (y-x)^2 e^{R(x,y)} g_x(x) g_y(y|x) dx dy\,.
     \end{split}
\end{equation}
As in the case of the computation of $A_1$, we use the symmetry
\begin{equation}
    (x-y)^2e^{R(x,y)} g_x(x) g_y(y|x) = (x-y)^2g_x(y) g_y(x|x)\,,
\end{equation}
to reduce  \eqref{e:MSDone1} to
\begin{equation}
\label{e:MSDone2}
\begin{split}
    \mathrm{MSD}_1 &=2\int_{R(x,y)>0} (y-x)^2 g_x(x) g_y(y|x) dxdy \\
    &=\int_{x=-\infty}^\infty \int_{y=-|x|}^{|x|} 2(y-x)^2 g_x(x) g_y(y|x) dxdy\,.
\end{split}
\end{equation}
This is the split into four integrals, as in \eqref{e:A1split}.  Since the integrand, $2(y-x)^2 g_x(x) g_y(y|x)$ is invariant to $(x,y)\mapsto (-x,-y)$, $I=IV$ and  $II=III$.  Thus
\begin{equation}
    \mathrm{MSD}_1 = 4 \int_{x=0}^\infty\int_{y=-x}^x  g_x(x) g_y(y|x) dxdy\,.
\end{equation}
Using Mathematica with $\xi = x/\sqrt{\eps}$ and $\delta = \eps^{-1}\sigma^2/2$, $\MSD = \eps m(\delta)$ we have
\begin{equation}\label{e:mfucntion}
    m(\delta) = \frac{2\delta}{\pi(4 + \delta(-2+\delta))}\set{(8+\delta^3)\arctan\paren{\sqrt{\frac{8}{\delta^3}}} - 2 \sqrt{2}\delta^{3/2}}\,.
\end{equation}
This function is plotted in Figure~\ref{fig:mfunction}.

\begin{figure}
    \centering
    \includegraphics[width=8cm]{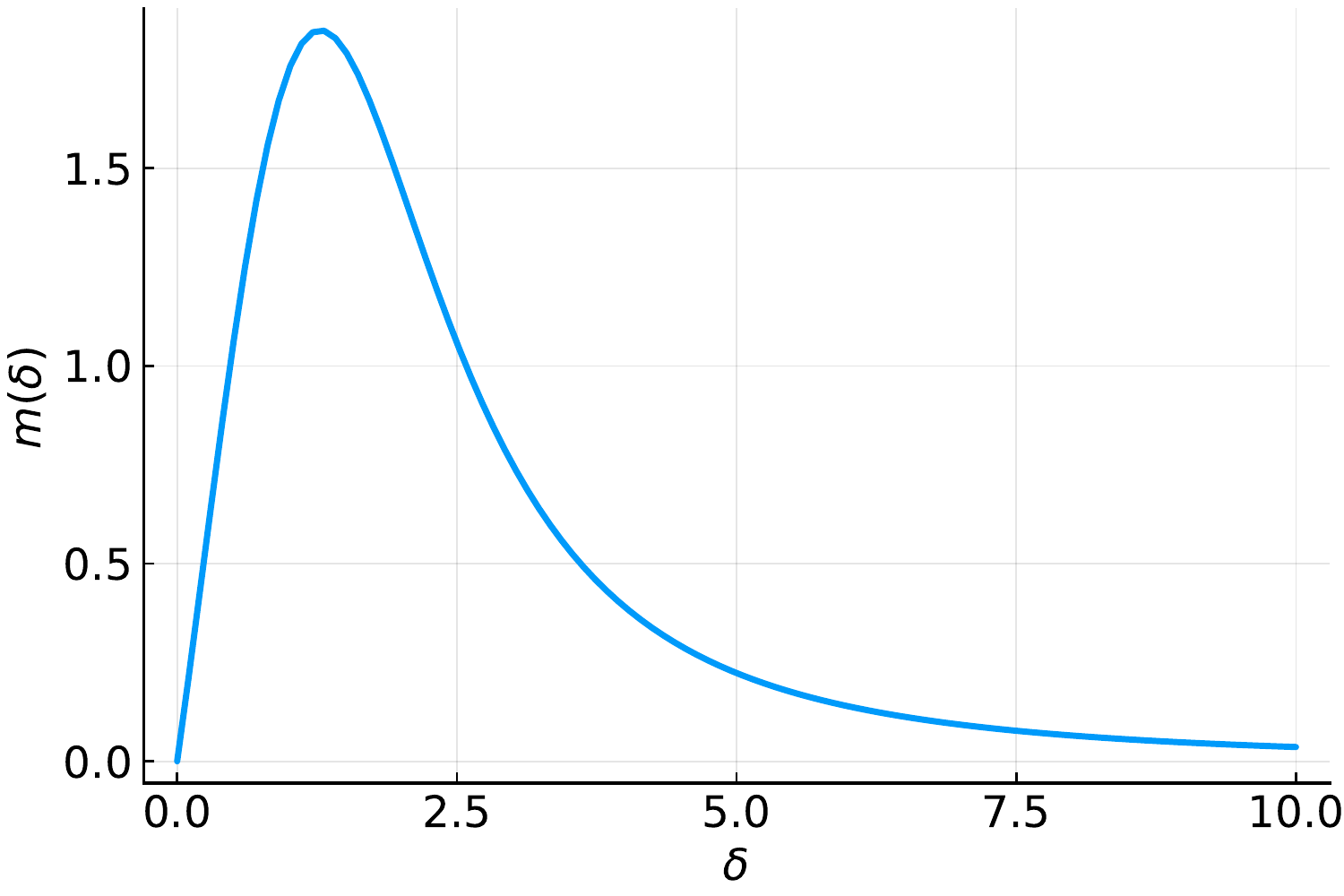}
    \caption{Function \eqref{e:mfucntion}.  Note that it has a single maximum at $\delta_\star = 1.2779727440041808$.  At this value the acceptance rate is $0.70$.}
    \label{fig:mfunction}
\end{figure}

\end{document}